\documentclass[10pt, a4paper]{article}

\textwidth=15.6cm \textheight=22.2cm
\oddsidemargin=.5cm\evensidemargin=.5cm\topmargin=.8cm\headsep=0cm

\usepackage{a4wide, amsfonts,enumerate,amsmath,amsfonts,xspace,array,delarray,theorem,amssymb,epsfig}
\usepackage{graphicx,color}
\usepackage[latin1]{inputenc}%

\theoremstyle{break}\theorembodyfont{\it}
\theoremheaderfont{\normalfont\bfseries}

\newtheorem{theo}{Theorem}

\newtheorem{theonumbered}{Theorem}

\newtheorem{lem}[theo]{Lemma}

\newtheorem{prop}[theo]{Proposition}

\newtheorem{rem}[theo]{Remark}

\newenvironment{proof}{\noindent{\bf Proof: }}
                {\leavevmode\unskip\nobreak\hskip2em plus1fill
                $\scriptstyle\square$\vskip\theorempostskipamount\par}
% \newenvironment{rems}{\vskip\theorempostskipamount\par\noindent{\bf Remarks: }}
% {\leavevmode\unskip\nobreak\hskip2em plus1fill
% \vskip\theorempostskipamount\par}
% \newenvironment{rem}{\vskip\theorempostskipamount\par\noindent{\bf Remark: }}
% {\leavevmode\unskip\nobreak\hskip2em plus1fill
% \vskip\theorempostskipamount\par}

\let\lt=<
\let\gt=>
\def\freccia{{\longrightarrow}}
\def\R{{\mathbb R}}

\def\N{{\mathbb N}}

\def\SSS{ {\mathcal {S}}_{p,q}}
\let\phi=\varphi
\let\eps=\varepsilon

\def\be{\begin{equation}}
\def\ee{\end{equation}}
\def\beq{\begin{eqnarray*}}
\def\eeq{\end{eqnarray*}}

\def\d{{\rm d}}
\newcommand{\fer}[1]{(\ref{#1})}

%%%%%%%%%%%%%%%%%%%%%%%%%%%%%%%%%%%%%%%%%%%%%%%%%%%%%%%%Toscani
\def\hg{\widehat g}

%%%%%%%%%%%%%%%%%%%%%%%%%%%%%%%%%%%%%%%%%%%%%%%%%%%%%%%%%%%%%%%%%

\begin{document}

\title{Strong Convergence towards self-similarity for \\
one-dimensional dissipative Maxwell models}

\author{G. Furioli \thanks{University of Bergamo, viale
Marconi 5, 24044 Dalmine, Italy. \texttt{giulia.furioli@unibg.it}},
 A. Pulvirenti \thanks{Department\
of Mathematics, University of Pavia, via Ferrata 1, 27100 Pavia,
Italy. \texttt{ada.pulvirenti@unipv.it}}, E. Terraneo
\thanks{Department of Mathematics, University of Milano, via Saldini
50, 20133 Milano, Italy. \texttt{Elide.Terraneo@mat.unimi.it}}, and
G. Toscani\thanks{Department of Mathematics, University of Pavia,
via Ferrata 1, 27100 Pavia, Italy.
\texttt{giuseppe.toscani@unipv.it}}
 }

\maketitle

\abstract{We prove the propagation of regularity, uniformly in
time, for the scaled solutions of the one-dimensional dissipative
Maxwell models introduced in \cite{BBLR}. This result together
with the weak convergence towards the
%homogenous cooling state
stationary state proven in \cite{PT06} implies the strong
convergence in Sobolev norms and in the $L^1$ norm towards it
depending on the regularity of the initial data. As a consequence,
the original non scaled solutions are also proved to be convergent
in $L^1$ towards the corresponding self-similar homogenous cooling
state. The  proof is based on the (uniform in time) control of the
tails of the Fourier transform of the solution, and it holds for a
large range of values of the mixing parameters. In particular, in
the case of the one-dimensional inelastic Boltzmann equation, the
result does not depend of the degree of inelasticity. This
generalizes a recent result of Carlen, Carrillo and Carvalho
\cite{CCC08}, in which, for weak inelasticity, propagation of
regularity for the scaled inelastic Boltzmann equation was found
by means of a precise control of the growth of the Fisher
information.}

\section{Introduction}
In 2003 Ben-Avraham and coworkers \cite{BBLR} introduced a
one-dimensional model of the Boltzmann equation, in which binary
collision processes are given by arbitrary linear collision rules
 \be\label{coll}
 v^* = pv+qw, \quad w^* = qv +pw ; \quad p\geq q>0.
 \ee
The positive constants $p$ and $q$ represent the mixing
parameters, namely the portion of the pre--collisional velocities
$(v,w)$ which generate the post--collisional ones $(v^*,w^*)$.
Under the hypothesis of \emph{constant} collision frequency, this
mechanism of collision leads to the integro-differential equation
of Boltzmann type,
 \be \partial_t f(v,t) =
\int_{\R}\left( \frac 1J f(v_*,t)
 f(w_*,t) - f(v,t) f(w,t)\right)\, \d w \label{eq:boltz}
  \ee
where now $(v_*,w_*)$ are the pre-collisional velocities that
generate the couple $(v,w)$ after the interaction and $J = p^2-q^2$ is the Jacobian of the transformation
of $(v,w)$ into $(v^*,w^*)$.
% Note that, since we fixed  $p>q$, the
% Jacobian $J$ is positive and that the unique situation corresponding
% to $J=1$ is obtained taking $p=1$ and $q=0$ for which the collision
% operator vanishes.
 As observed in \cite{BBLR}, while in the long-time limit velocity
distributions are generically self-similar, there is a wide spectrum
of possible behaviors. The velocity distributions are characterized
by algebraic or stretched exponential tails and the corresponding
exponents depend sensitively on the collision parameters.
Interestingly, when there is energy or momentum conservation, the
behavior is universal.

Since the integrals $\int_\R v^n\, f(v,t)\, \d v$,\ $n\ge 0$,  obey a closed
hierarchy of equations \cite{BK},  moments can be evaluated
recursively, starting from mass conservation. In particular,
choosing as initial density a normalized probability density
$f_0$ satisfying
 \be\label{norm1}
 f_0\geq 0,\quad \int_{\R} f_0(v)\, \d v =1 \, , \quad    \int_{\R} v f_0(v)\, \d v =  0
 \, ,
 \quad  \int_{\R} v^2 f_0(v)\, \d v = 1,
 \ee
it follows that both mass and momentum are preserved in time, while
the second moment varies according to the law
 \be\label{2-th}
 E(t) = \int_{\R} v^2 f(v,t)  \,\d v = \exp\left\{ (p^2+q^2-1)t\right\}.
 \ee
Special cases include the elastic model ($p^2 + q^2 = 1$), which is
the analogous of the well-known Kac model \cite{Kac, McK}, the
inelastic collisions ($p + q = 1$ ) \cite{BMP}, the granules model
($p+q < 1$ ) \cite{RBL},  the inelastic Lorenz gas ($q = 0, p<1$)
\cite{MP99}, and in addition  energy producing models ($p> 1$)
\cite{TK}.

By \fer{2-th} it follows that the second moment of the solution is
not conserved, unless the collision parameters satisfy
 \[
 p^2 + q^2 = 1.
 \]
If this is not the case, the energy can grow to infinity or decrease
to zero, depending on the sign of $p^2+q^2-1$. In both cases,
however,  stationary solutions of finite energy do not exist, and
the large--time behavior of the system can at best be described by
self-similarity properties. The standard way to look for
self-similarity is to scale the solution according to the rule
 \[
%\label{scala}
  g(v,t) = \sqrt{ E(t)}f\left( v\sqrt{ E(t)}, t \right) .
  \]
This scaling implies that $\int_\R v^2\, g(v,t)\, \d v= 1$ for all $t \ge 0$.

The large-time behavior of the density $f(v,t)$ has been studied in
\cite{PT06}, by resorting to the Fourier transform version of the
Boltzmann equation \fer{eq:boltz}, which reads
\be
\label{four}
\partial_t \hat f(\xi,t)= \hat f(p\xi,t) \hat f(q\xi,t)-\hat f(\xi,t).
\ee
which convertes for the scaled solution $g(t)$ into the following
\be\label{g-eq}
\partial_t \hat g(\xi,t)+\frac
12\left(p^2+q^2-1\right)\xi\partial_\xi\hat g(\xi,t)=
 \hat g(p\xi,t) \hat g(p\xi,t)-\hat g(\xi,t).
\ee
It is worth  recognizing from equation \eqref{four} an equivalent formulation of equation
\eqref{eq:boltz}
making use of the convolution operator
\[
\partial_t f(v,t)= f_p\ast f_q(v,t)-f(v,t)
\]
where we used the shorthand
 \[
f_p(v) = \frac 1p f\left( \frac vp \right).
 \]
The key argument for studying equations \eqref{four} and \eqref{g-eq} is the use of a metric for probability
densities with finite and equal moments of order $[\alpha]$, where, as usual,
$[\alpha]$ denotes the entire part of the real number $\alpha$:
\begin{equation}
\label{ds}
d_\alpha (f,g) = \sup_{\xi \in {\R}} \frac{|\widehat{f}(\xi) -
\widehat{g}(\xi)|}{|\xi|^\alpha}.
\end{equation}
The metric \fer{ds} has been introduced in~\cite{GTW} to investigate the trend to
equilibrium of the solutions to the Boltzmann equation for Maxwell molecules.
Further applications of $d_\alpha$ can be found in~\cite{CGT, PT, GJT, TV}.

The study of the time evolution of the $d_\alpha$-metric, with $\alpha = 2 +
\delta$, for some suitable $0<\delta <1$, enlightened the range of the mixing
parameters for which one can expect that the scaled function $g(t)$ converges
(weakly) towards a steady profile $g_\infty$ at an exponential rate. More
precisely, let us define, for fixed $ p$ and $q$ the function
 \be
\label{key}
 \SSS (\delta) = p^{2+\delta} + q^{2+\delta} -1 -\frac{2+\delta}2\left(p^2+q^2
 -1\right).
  \ee
 Then, the sign of $\SSS$ determines the asymptotic behavior of the
distance  $d_\alpha(g_1(t), g_2(t))$ between two scaled solutions
$g_1(t)$, $g_2(t)$ issued from two initial data $f_{1,0}$,
$f_{2,0}$. In particular, it has been proved in \cite{PT06} that
if there exists  $\tilde \delta \in (0,1)$ such that
 $\SSS (\delta) <0$ for $0< \delta < \tilde \delta$, we can conclude that
$d_{2+\delta}(g_1(t), g_2(t))$ converges exponentially to zero if
initially finite according to the bound
 \[
  d_{2+\delta} (g_1(t), g_2(t)) \leq \exp\left\{-\left| \SSS(\delta)\right|t\right\}d_{2+\delta}(f_{1,0},f_{2,0}),
\quad \delta \in (0,\tilde \delta).
\]
It has been also proved in \cite{PT06} that in this case
a unique steady state $g_\infty$ exists for equation \eqref{g-eq} and for any initial data
$f_0$ with $(2+\tilde \delta)$ finite moments, we have for
$0<\delta<\tilde \delta$:
 \be
\label{dec1}
 d_{2+\delta} (g(t),g_\infty) \leq \exp\left\{-\left| \SSS(\delta)\right|t\right\}d_{2+\delta}(f_0, g_\infty) \to 0,
 \quad t\to +\infty.
\ee
On the original non scaled solution $f(t)$, the limit behavior corresponds to a self-similar state
$f_\infty(v,t)= \frac 1{\sqrt{E(t)}} g_\infty \left(\frac v{\sqrt{E(t)}}\right)$.
Note that, by construction, $\SSS (0)= 0$, and thus
$\min_{\delta \in (0,1)}\{\SSS \}\leq 0$. A numerical evaluation of the region
where the minimum of the function $\SSS$ is negative for $p,q \in
[0,2]$ is reported in \cite{PT06}. This region includes the
relevant cases of both inelastic ($p+q=1$) and elastic ($p^2 +q^2
=1$) collisions, as well as the case, among others, in which
$p=q=1$.

Despite the fact that the large-time behavior of the solution to the Boltzmann--like
equation \eqref{eq:boltz} can be described in terms of the $d_\alpha$-metric, which is
equivalent to the weak$^*$ convergence of measures \cite{CT07}, the strong
convergence of the scaled density $g(t)$ towards $g_\infty$ has never been
proved before.

A similar problem occurs in dissipative kinetic theory, where the weak
convergence of the (scaled) solution to the inelastic Boltzmann equation for
Maxwell molecules towards the homogeneous cooling state with polynomial tails is known
to hold \cite{BCT} in the $d_\alpha$-metric framework, but the strong
convergence is still unknown in full generality. A recent paper by Carlen,
Carrillo and Carvalho \cite{CCC08} shows that in some cases one can prove that
the strong convergence holds. Their result, however, requires a small
inelasticity regime, which in our setting of the mixing parameters means
$p+q=1$, and at the same time $1-p^2-q^2 << 1$. In this case, in fact,  one can
resort to methods close to the elastic situation, in which the (controlled)
growth  of  the Fisher information, coupled with the exponential decay of the
$d_\alpha$-metric allows to prove the uniform propagation of Sobolev regularity, and
from this, by interpolation, the strong convergence.

Propagation of Sobolev regularity for both Kac equation and the elastic Boltzmann
equation for Maxwellian molecules, together with the precise exponential rate of the
strong convergence to
the Maxwellian equilibrium $M$ has been proved in \cite{CGT}. The
advantage of working with the classical elastic Boltzmann equation
relies on the fact that one can resort to the $H$-theorem.
% , which
% implies convergence in $L^1$ to equilibrium.
A careful reading of
\cite{CGT}, however, allows to conclude that the proof of uniform propagation of regularity
makes use of
%a weaker (with respect to $L^1$-convergence)
the following condition
for the distance between the solution $f(t)$ and the Maxwellian
$M$:
 \be
\label{ok}
 \sup_{\xi\in\R} |\hat f(\xi,t) - \hat M(\xi)| \to 0,\quad t\to +\infty.
 \ee
 While the convergence of $H(f)(t)$ to $H(M)$ implies the $L^1$--convergence and therefore \fer{ok},
the same condition continues to hold provided the
Fourier transform
of the solution to the (scaled) Boltzmann equation $g(t)$ and of the stationary state $g_\infty$ satisfy
\fer{dec1} together with a suitable (uniform) decay at infinity in the $\xi$ variable, of
the type
 \be
\label{tail}
 {(1 + \kappa|\xi|)^\mu}|\hat g(\xi,t)| \le K,\quad \xi\in\R
 \ee
 for some positive constants $\kappa,\ K$ and $\mu$. In fact, in
 this case, for any given $R>0$,
 \[
 |\hat g(\xi,t) - \hat g_\infty(\xi)| \le d_\alpha(g(t), g_\infty) R^\alpha +
 \frac {2K}{(\kappa R)^\mu},
 \]
which implies, optimizing over $R$,
 \[
%\label{ok2}
|\hat g(\xi,t) - \hat g_\infty(\xi)| \le C(\alpha, \mu,  \kappa, K)
d_\alpha(g(t), g_\infty)^{\mu/(\alpha+\mu)},\quad \xi\in\R,\ t\geq 0.
 \]
Therefore, in presence of condition \fer{tail}, the decay to zero of the
$d_\alpha$-metric implies the decay to zero of $\sup_{\xi\in\R}|\hat g(\xi,t) - \hat
g_\infty(\xi)|$ for $t\to +\infty$. We will remark in Section \ref{sec-strong} that condition \fer{tail} on the initial density
$f_0$ holds provided the square root of $f_0$ has some regularity.
For example,
the Fisher information of $f_0$, which controls the $H^1$-norm of the square
root, controls $|\xi||\hat f_0(\xi)|$ (cfr. the proof in \cite{LT}).

Condition \fer{tail} is difficult to prove directly from the
equation satisfied by the scaled density $g(t)$, due to the
presence of the drift term in equation \eqref{g-eq}.
To simplify the proof of the various bounds we will introduce a semi-implicit
discretization of equation \fer{g-eq}, that is, for a small time interval
$\Delta t$, we will consider the solution to
\begin{equation}
\label{d-eq}
\frac{ \hat g(\xi,t+\Delta t)- \hat g(\xi,t)}{\Delta t} +\frac
12\left(p^2+q^2-1\right)\xi\partial_\xi\hat g(\xi,t+ \Delta t)=
 \hat g(p\xi,t) \hat g(q\xi,t)-\hat g(\xi,t).
 \end{equation}
Inspired by the integral formulation of the stationary state $g_\infty$ in \cite{BC03},  the solution $\hat
g(\xi,t+\Delta t)$  to \fer{d-eq} at time $t+ \Delta t$ is shown to
be a convex combination of the probability densities $\hat g(\xi,t)$
and $\hat g(p\xi,t) \hat g(q\xi,t)$. Precisely, for $p^2+q^2<1$ we have
\be
\label{sol-d}
\hat g(\xi, t+\Delta t)=\frac{r}{\Delta t}\
\int_{1}^{+\infty}\left(\Delta t \,\, \hat g(\tau p\xi,t) \hat
g(\tau q\xi,t) \ +(1-\Delta t)\ \hat g( \tau \xi,t) \right)\frac{\d
\tau}{\tau^{\frac r{\Delta t}+1}},
\ee
where
 \[
\frac 1r=\frac{1-p^2-q^2}{2}
 \]
 and for $p^2+q^2 >1$ we have
 \be
\label{sol-non diss}
\hat g(\xi, t+\Delta t)=-\frac{r}{\Delta t}\
\int_{0}^{1}\left(\Delta t \,\, \hat g(\tau p\xi,t) \hat
g(\tau q\xi,t) \ +(1-\Delta t)\ \hat g( \tau \xi,t) \right)\frac{\d
\tau}{\tau^{\frac r{\Delta t}+1}}.
\ee
Eventhough the solution of the semi-implicit discretization  is in both dissipative
and non dissipative case a uniform approximation of the scaled solution $g(t)$ (see Proposition \ref{prop-converg} in Section \ref{approssim}), we have been able to exploit this approximation only in the dissipative case.
In this case,
the integral formulation is also useful to recover
regularity properties of the steady solution $g_\infty$. In
particular, we shall prove that the steady state has the Gevrey
regularity of class $\lambda$, namely
 \[
 e^{\mu|\xi|^\lambda}|\hat g_\infty(\xi)| \le 1, \qquad |\xi| > \rho,
 \]
where the constants $\lambda, \mu$ and $\rho$ are related both to $p, q$
and to the number of moments which are bounded initially. Our main
results are summarized into the following statements.
In what follows we will denote $g_0$ the initial data of a scaled solution $g(t)$, even if
of course $g_0=f_0$, the initial data of the original, non scaled solution $f(t)$.

\begin{theo} \label{teo-bounds}
 Assume $0<q\leq p$ satisfying $p^2+q^2<1$ and such that there is $\tilde\delta \in (0,1)$ for which  $\SSS(\delta) <0$, for $0<\delta<\tilde\delta$.
 Let $g(t)$ be the weak
solution of the equation (\ref{g-eq}), corresponding to the
initial density $g_0$ satisfying the normalization conditions
\fer{norm1}, and
$$
\int_{\R}|v|^{2+\tilde\delta}\, g_0(v)\, \d v<+\infty.
$$
If in addition
 \begin{equation}\label{con3}
|\hat g_0(\xi)|\leq \frac 1{(1+\beta|\xi|)^\nu}, \quad  |\xi|>R,
\end{equation}
for some $R>0$, $\nu >0$ and $\beta>0$, then there exist
$\rho>0$,\ $k >0$,\ $\beta' >0$, $\nu'>0$
% and $0<\tilde\delta\leq \delta$
 such that
$g(t)$ satisfies
% $$
% \int_{\R}|v|^{2+\tilde\delta}g(v,t) \d v\leq c_\delta <+\infty ,
% $$
% and
 \begin{equation}
\label{bbs}
 |\hat g(\xi, t)| \leq
\left\{
\begin{aligned}
 &\frac 1{1+ k\xi^2},\quad |\xi| \leq \rho ,\quad t\geq 0\\
& \frac 1{(1+ \beta ' |\xi|)^{{\nu}'}},\quad |\xi|> \rho,\quad
t\geq 0.
\end{aligned}
\right .
\end{equation}
\end{theo}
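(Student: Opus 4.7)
The plan is to work with the semi-implicit scheme \eqref{sol-d} in place of the PDE \eqref{g-eq} directly. The crucial feature of \eqref{sol-d} is that it expresses $\hat g(\cdot,t+\Delta t)$ as a non-negative integral mixture of $\hat g(\cdot,t)$ and of the product $\hat g(p\,\cdot,t)\hat g(q\,\cdot,t)$, a structure tailor-made for propagating pointwise upper bounds on the Fourier transform. I construct an envelope $\Phi(\xi)$ of the form prescribed in \eqref{bbs}, verify that $|\hat g_0|\leq\Phi$ for a suitable choice of the constants $\rho,k,\beta',\nu'$, and prove by induction on $n$ that $|\hat g(\cdot,n\Delta t)|\leq\Phi$ for all $n\geq 0$. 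Passing to the limit $\Delta t\to 0$ via Proposition \ref{prop-converg} then transfers the estimate to the continuous-time scaled solution $g(t)$.

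\textbf{Low-frequency bound.} From the normalization \eqref{norm1} the Taylor expansion $\hat g(\xi,t)=1-\xi^2/2+R(\xi,t)$ holds with $|R(\xi,t)|\leq C_{\tilde\delta}|\xi|^{2+\tilde\delta}\int_{\R}|v|^{2+\tilde\delta}g(v,t)\,\d v$. The key input is uniform-in-time propagation of the $(2+\tilde\delta)$-moment of $g(\cdot,t)$, obtained by closing the moment hierarchy for \eqref{g-eq} (or as already established in \cite{PT06}). Choosing $\rho$ small enough that the remainder is dominated by $\xi^2/4$ on $|\xi|\leq\rho$ then yields $|\hat g(\xi,t)|\leq 1-\xi^2/4\leq 1/(1+\xi^2/4)$ on $|\xi|\leq\rho$, uniformly in $t\geq 0$, which gives the first line of \eqref{bbs} with $k=1/4$.

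\textbf{High-frequency bound.} For the algebraic tail, the elementary inequality $(1+\beta'\tau|\xi|)^{-\nu'}\leq\tau^{-\nu'}(1+\beta'|\xi|)^{-\nu'}$, valid for $\tau\geq 1$, is the main workhorse. Inserting the induction hypothesis $|\hat g(\cdot,t)|\leq\Phi$ into \eqref{sol-d}, the drift contribution bounds to $(1-\Delta t)\frac{r/\Delta t}{r/\Delta t+\nu'}\,(1+\beta'|\xi|)^{-\nu'}$, strictly below $\Phi(\xi)$, while the convolution contribution is of order $\Delta t\cdot(1+\beta'p|\xi|)^{-\nu'}(1+\beta'q|\xi|)^{-\nu'}$ and decays faster in $|\xi|$. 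Summing the two, the induction closes provided $\beta'$ and $\nu'\leq\nu$ are chosen so that the ratio $(1+\beta'|\xi|)^{\nu'}/[(1+\beta'p|\xi|)(1+\beta'q|\xi|)]^{\nu'}$ stays below $1+\nu'/r$ on $|\xi|\geq\rho$, a mild condition for $\beta'$ small. The initial inequality $|\hat g_0|\leq\Phi$ on $|\xi|>\rho$ follows from \eqref{con3} for $|\xi|>R$ and from a (suitably extended) moment-based quadratic bound on the intermediate zone $\rho<|\xi|\leq R$, after a final calibration of the constants.

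\textbf{Main obstacle.} The delicate issue is the joint calibration of $\rho,k,\beta',\nu'$ so that the envelope simultaneously (i) dominates $|\hat g_0|$ on all of $\R$, (ii) joins continuously at $|\xi|=\rho$, and (iii) is preserved by the integral operator \eqref{sol-d}. The critical region is the transition zone: for $\xi$ just above $\rho$ and $\tau$ close to $1$, the arguments $\tau p\xi,\tau q\xi$ may drop into $|\eta|\leq\rho$, so the tail bound is locally replaced by the quadratic one inside the integrand, and one must check that this substitution does not break the contraction. Once the induction is uniform in $n$, Proposition \ref{prop-converg} transfers the discrete bound to $g(t)$ and yields \eqref{bbs}.
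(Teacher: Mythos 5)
Your overall architecture --- propagating a two--regime envelope through the semi-implicit scheme \eqref{sol-d} by induction and passing to the limit via Proposition \ref{prop-converg} --- is the same as the paper's, and your low-frequency argument is a legitimate variant: instead of the paper's route through the decay of $d_{2+\delta}(g(t),g_\infty)$ and the expansion of $\hat g_\infty$ near the origin (Lemma \ref{close}), you bound the Taylor remainder by $C|\xi|^{2+\delta}$ times the $(2+\delta)$-moment, which is uniformly bounded in time by Theorem \ref{teo-pt} (and, for the discrete approximations $\hat\phi_j^N$, by Proposition \ref{prop-momenti}); both routes give the first line of \eqref{bbs} uniformly in $t$.

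The high-frequency step, however, has a genuine gap. Your ``workhorse'' inequality $(1+\beta'\tau|\xi|)^{-\nu'}\leq \tau^{-\nu'}(1+\beta'|\xi|)^{-\nu'}$ is false for $\tau>1$: it is equivalent to $1+\beta'\tau|\xi|\geq \tau(1+\beta'|\xi|)=\tau+\beta'\tau|\xi|$, i.e.\ to $\tau\leq 1$, and in fact the reverse inequality holds. Consequently the drift term in \eqref{sol-d} does not acquire the gain factor $\frac{r/\Delta t}{r/\Delta t+\nu'}<1$; all that is true is $(1+\beta'\tau|\xi|)^{-\nu'}\leq(1+\beta'|\xi|)^{-\nu'}$, so after integration against the normalized kernel the drift contributes exactly $(1-\Delta t)(1+\beta'|\xi|)^{-\nu'}$. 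The slack $1+\nu'/r$ on which you propose to close the induction therefore does not exist: you must show that the product $|\hat g(p\tau\xi,t)\,\hat g(q\tau\xi,t)|$ is itself pointwise bounded by $(1+\beta'|\xi|)^{-\nu'}$ for every $\tau\geq1$ and $|\xi|>\rho$. That is precisely the step you defer as ``the main obstacle'': when $q\tau|\xi|\leq\rho$, or both $p\tau|\xi|$ and $q\tau|\xi|$ are $\leq\rho$, only the quadratic bound is available for the corresponding factor, and the estimate closes only after imposing quantitative smallness conditions on $\nu'$ --- in the paper, $\nu'\leq\log_{1/p}(1+kq^2\rho^2)$ in the mixed case, and $\nu'$ below the infimum of $\log(1+k(p^2+q^2)|\xi|^2)/\log(1+\beta|\xi|)$ over $|\xi|>\rho$ in the fully low case. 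These two computations are the substance of the proof and are missing here. A smaller omission of the same kind: on the intermediate zone $\rho<|\xi|\leq R$ the hypothesis \eqref{con3} gives nothing, and one must first shrink $\nu'$ so that the tail envelope dominates $|\hat g_0|$ there; the paper invokes Proposition 2.4 of \cite{DFT} for this.
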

Theorem \ref{teo-bounds} is proven in Section
\ref{regularity}. The second result is concerned with the regularity of the
steady state $g_\infty$ and is proven in Section \ref{Smoothness}.

\begin{theo}
  \label{thm.smoothness}
  Assume $0<q\leq p$ satisfying $p^2+q^2 <1$ and such that there exists  $\tilde \delta \in (0,1)$
  for which
 $\SSS (\delta) <0$ for $0< \delta < \tilde \delta$
 so that a non-trivial steady state $g_\infty$ to the Boltzmann equation \eqref{g-eq} exists.
Let us denote $\lambda \in (0,2)$ the exponent such that $p^\lambda +q^\lambda =1$.
  Then $g_\infty$ is a smooth function and belongs to the $\lambda$-th Gevrey class $G^\lambda(\R)$, i.e.
  \begin{align*}
    \big| \hat g_\infty(\xi)\big| &\leq \exp\big(-\mu|\xi|^\lambda\big), \quad |\xi|>\rho
  \end{align*}
with suitable positive numbers $\rho$ and $\mu$.
\end{theo}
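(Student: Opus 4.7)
The plan is to derive an integral representation for $\hat g_\infty$ from the stationary form of \eqref{g-eq} and to identify $\lambda$ as the unique exponent for which the Gevrey ansatz is self-reproducing under this representation.

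Setting $\partial_t\equiv 0$ in \eqref{g-eq} gives the first-order linear ODE
\[
 \xi\partial_\xi\hat g_\infty(\xi)-r\,\hat g_\infty(\xi)=-r\,\hat g_\infty(p\xi)\hat g_\infty(q\xi),\qquad r=\frac{2}{1-p^2-q^2}>0.
\]
Multiplying by $\xi^{-r-1}$ and integrating from $\xi$ to $+\infty$ on $\{\xi>0\}$ (the opposite orientation on $\{\xi<0\}$ is symmetric, and the boundary term at infinity vanishes because $|\hat g_\infty|\leq 1$ and $r>0$), the change of variable $u=\tau\xi$ produces the Bobylev--Cercignani integral representation
\be
\label{plan-intrep-gev}
\hat g_\infty(\xi)=r\int_1^{+\infty}\hat g_\infty(\tau p\xi)\hat g_\infty(\tau q\xi)\,\frac{\d\tau}{\tau^{r+1}},
\ee
which may also be read off as the formal $\Delta t\downarrow 0$ limit of \eqref{sol-d} at the stationary profile.

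The key algebraic observation is that the condition $p^\lambda+q^\lambda=1$ is exactly what renders the ansatz $|\hat g_\infty(\eta)|\leq A\,e^{-\mu|\eta|^\lambda}$ self-reproducing under \eqref{plan-intrep-gev}: indeed
\[
 |\hat g_\infty(\tau p\xi)\hat g_\infty(\tau q\xi)|\leq A^2 e^{-\mu(p^\lambda+q^\lambda)\tau^\lambda|\xi|^\lambda}=A^2 e^{-\mu\tau^\lambda|\xi|^\lambda},
\]
and a Laplace--Watson evaluation of the remaining $\tau$-integral near the boundary $\tau=1$ yields
\[
 \int_1^{+\infty}e^{-\mu\tau^\lambda|\xi|^\lambda}\frac{\d\tau}{\tau^{r+1}}\leq \frac{C}{\mu\lambda|\xi|^\lambda}\,e^{-\mu|\xi|^\lambda}.
\]
Plugging back into \eqref{plan-intrep-gev} we obtain $|\hat g_\infty(\xi)|\leq (CrA^2/(\mu\lambda|\xi|^\lambda))\,e^{-\mu|\xi|^\lambda}$, which is strictly better than $Ae^{-\mu|\xi|^\lambda}$ as soon as $|\xi|^\lambda\geq CrA/(\mu\lambda)$. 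This quantitative contraction is the engine of the proof.

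To start the bootstrap I would use Theorem \ref{teo-bounds} applied to $g_\infty$ itself: since $g_\infty$ is the long-time limit of any scaled solution issued from a sufficiently smooth datum, the uniform bound \eqref{bbs} passes to the limit and gives $|\hat g_\infty(\xi)|\leq (1+\beta'|\xi|)^{-\nu'}$ for $|\xi|>\rho$. Feeding this polynomial bound back into \eqref{plan-intrep-gev} and iterating essentially doubles the decay exponent at each step, yielding $|\hat g_\infty(\xi)|\leq C_N(1+|\xi|)^{-N}$ for every $N\in\N$; this is enough to construct, for any sufficiently small $\mu>0$, a global supersolution of Gevrey type $Ae^{-\mu|\cdot|^\lambda}$. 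Finally, because the operator $T\phi(\xi):=r\int_1^{+\infty}\phi(\tau p\xi)\phi(\tau q\xi)\tau^{-r-1}\d\tau$ is monotone on nonnegative functions and strictly contractive on the Gevrey ansatz for $|\xi|$ large, iterating $T$ starting from this supersolution drives it down to the fixed point $|\hat g_\infty|$ and produces the claimed bound $|\hat g_\infty(\xi)|\leq e^{-\mu|\xi|^\lambda}$ for $|\xi|>\rho$.

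The main obstacle is the genuinely nonlocal character of \eqref{plan-intrep-gev}: the value of $\hat g_\infty$ at scale $\xi$ is coupled to its values at arbitrarily larger scales $\tau p|\xi|$ and $\tau q|\xi|$ with $\tau\geq 1$, which forbids a plain monotone induction in $|\xi|$. What rescues the argument is that the kernel $\tau^{-r-1}$ concentrates sharply near $\tau=1$, so a Laplace--Watson asymptotic extracts the extra prefactor $|\xi|^{-\lambda}$; combined with the dimensional identity $p^\lambda+q^\lambda=1$, this is precisely the factor which absorbs the Gevrey constant $A$ and makes the whole ansatz self-improving, closing the proof.
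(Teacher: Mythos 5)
You have the two central ingredients right: the Bobylev--Cercignani integral representation
\[
\hat g_\infty(\xi)=r\int_1^{+\infty}\hat g_\infty(\tau p\xi)\,\hat g_\infty(\tau q\xi)\,\frac{\d\tau}{\tau^{r+1}},
\qquad r=\frac{2}{1-p^2-q^2},
\]
and the observation that $p^\lambda+q^\lambda=1$ is exactly the identity that makes a stretched-exponential ansatz reproduce itself. These are also the pillars of the paper's proof. But the machinery you build around them has two genuine gaps. First, the bootstrap ``polynomial decay of every order $\Rightarrow$ a global Gevrey supersolution'' is not a valid implication: $|\hat g_\infty(\xi)|\leq C_N(1+|\xi|)^{-N}$ for every $N$ gives a stretched-exponential bound only if the constants $C_N$ grow in a controlled (roughly $N^{N/\lambda}$) way, and your doubling iteration squares the constant and picks up a factor $(pq)^{-\nu}$ at each step; you never track this, so the claimed supersolution is not constructed. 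Second, and more seriously, your contraction is only effective for $|\xi|^\lambda\geq CrA/(\mu\lambda)$: on the intermediate range between $\rho$ and that threshold the function $A e^{-\mu|\cdot|^\lambda}$ is \emph{not} a supersolution of the monotone operator $T$ (the Laplace--Watson gain $|\xi|^{-\lambda}$ is not yet large enough to absorb $A^2/A$ there), so the monotone iteration need not decrease, and nothing forces the limit below $e^{-\mu|\xi|^\lambda}$ on that range. Note also that $|\hat g_\infty|$ is only a \emph{sub}solution of $T$ ($|\hat g_\infty|\leq T|\hat g_\infty|$), not its fixed point, so ``driving the supersolution down to the fixed point'' does not by itself yield the claimed one-sided bound with constant $1$.

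The paper closes these gaps differently, and you may want to compare. It never introduces a prefactor $A$: it works in the space $X_\rho$ of functions pinned to equal $\hat g_\infty$ on $|\xi|\leq\rho$ (where a Gaussian-type bound $e^{-k\xi^2}$ holds, with $\rho>1$ so that $\xi^2\geq|\xi|^\lambda$), shows the corresponding map is a $d_{2+\delta}$-contraction precisely because $\SSS(\delta)<0$, and propagates the bound $|\psi_n(\xi)|\leq e^{-\mu|\xi|^\lambda}$ for $|\xi|>\rho$ through the iterates with constant exactly $1$. With constant $1$ no gain from the $\tau$-integral is needed at all, since $r\int_1^{+\infty}\tau^{-r-1}\d\tau=1$ and $\tau\geq 1$ already gives $e^{-\mu\tau^\lambda(p^\lambda+q^\lambda)|\xi|^\lambda}\leq e^{-\mu|\xi|^\lambda}$; the cases where $\tau p|\xi|$ or $\tau q|\xi|$ fall below $\rho$ are handled by the Gaussian bound there, choosing $\mu\leq\min(kC,\,kq^{2}/q^{\lambda})$. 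If you want to salvage your route, the missing step is a matching argument on the intermediate frequency range (or equivalently, a way to remove the prefactor $A$); as written, the proof does not close.
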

% \begin{theo}
%   \label{teo-smooth}
%   Assume that $p <1$, and let $\bar r < 1$ be such that
%   $p^{\bar r} + q^{\bar r } = 1$.
% Assume further that $\SSS(\delta) <0 $ for some $0< \delta < 1$,
%   so that a non-trivial steady state $f_\infty$ to the Boltzmann equation exists.
%   Then $g_\infty$ is a smooth function and belongs, for $r < \bar r$ to the $r$-th Gevrey class, i.e.
%   \begin{align*}
%     \big|\ff_\infty(\xi)\big| &\leq \exp\big(-\mu|\xi|^r\big), \quad \mbox{ for $|\xi|\geq\rho$,}
%   \end{align*}
% with suitable positive numbers $\rho$ and $\mu$. If $p \ge 1$, $f_\infty$
% belongs to the $r$-th Gevrey class for any $r \le 2$.
% \end{theo}

As a byproduct of both proofs of Theorem \ref{teo-bounds} and Theorem \ref{thm.smoothness}, we also get
the uniform propagation of the Gevrey regularity for solutions $g(t)$ issued from Gevrey initial data.

\begin{theo}\label{gev-prop}
 Assume $0<q\leq p$ satisfying $p^2+q^2 <1$ and such that there exists  $\tilde \delta \in (0,1)$
 for which
 $\SSS (\delta) <0$ for $0< \delta < \tilde \delta$
and let us denote $\lambda \in (0,2)$ the exponent such that
$p^\lambda +q^\lambda =1$. Let $g(t)$ be the weak solution of the
equation (\ref{g-eq}), corresponding to the initial density $g_0$
satisfying the normalization conditions \fer{norm1}, and
$$
\int_{\R}|v|^{2+\tilde \delta}\, g_0(v)\, \d v<+\infty.
$$
If in addition
 \begin{equation*}
|\hat g_0(\xi)|\leq e^{-\beta |\xi|^\nu}, \quad  |\xi|>R,
\end{equation*}
for some $R>0$, $\nu >0$ and $\beta>0$, then there exist $\rho>0$ and $\kappa >0$
 such that
$g(t)$ satisfies
% $$
% \int_{\R}|v|^{2+\tilde\delta}g(v,t) \d v\leq c_\delta <+\infty ,
% $$
% and
 \begin{equation*}
 |\hat g(\xi, t)| \leq
\left\{
\begin{aligned}
& e^{-\kappa \xi^2},\quad |\xi| \leq \rho,\quad t\geq 0\\
& e^{-\kappa |\xi|^{\min (\nu, \lambda)}} ,\quad
|\xi|> \rho,\quad t\geq 0.
\end{aligned}
\right .
\end{equation*}
\end{theo}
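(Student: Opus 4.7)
The plan is to propagate Gaussian decay near the origin and Gevrey decay at infinity as two separate estimates, exploiting the semi-implicit integral formula \eqref{sol-d} exactly as in the proofs of Theorems \ref{teo-bounds} and \ref{thm.smoothness}. Set $\mu:=\min(\nu,\lambda)$. Since $\mu\leq\nu$ and $|\xi|^\nu\geq|\xi|^\mu$ as soon as $|\xi|\geq 1$, the initial hypothesis can be weakened (after enlarging $R$ and shrinking the constant) to $|\hat g_0(\xi)|\leq e^{-\kappa_0|\xi|^\mu}$ for $|\xi|>R'$; this is the form we aim to propagate.

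For the low-frequency bound, the plan is to invoke directly the estimate $|\hat g(\xi,t)|\leq 1/(1+k\xi^2)$ already supplied by Theorem \ref{teo-bounds}. The elementary inequality $1/(1+x)\leq e^{-x/2}$ holds on a bounded interval $[0,x_0]$, so by further restricting the threshold so that $k\rho^2\leq x_0$ one obtains, at no extra cost and uniformly in $t$, the Gaussian control $|\hat g(\xi,t)|\leq e^{-\kappa\xi^2}$ with $\kappa=k/2$ on $|\xi|\leq\rho$.

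For the high-frequency bound, the plan is a bootstrap on the discretized equation \eqref{sol-d}. The crucial arithmetic fact is that $s\mapsto p^s+q^s$ is decreasing on $(0,+\infty)$, so $p^\mu+q^\mu\geq p^\lambda+q^\lambda=1$ whenever $\mu\leq\lambda$. Choosing $\rho$ large enough that $p\tau\xi$ and $q\tau\xi$ stay above the induction threshold for every $\tau\geq 1$ as soon as $|\xi|>\rho$, and using that $(r/\Delta t)\,\tau^{-r/\Delta t-1}\,\d\tau$ is a probability measure on $[1,+\infty)$, plugging the ansatz $|\hat g(\cdot,t)|\leq e^{-\kappa|\cdot|^\mu}$ into \eqref{sol-d} yields
\begin{equation*}
|\hat g(\xi,t+\Delta t)|\leq \frac{r}{\Delta t}\int_1^{+\infty} e^{-\kappa(p^\mu+q^\mu)\tau^\mu|\xi|^\mu}\,\frac{\d\tau}{\tau^{r/\Delta t+1}}\leq e^{-\kappa|\xi|^\mu},
\end{equation*}
so the Gevrey bound is preserved by one discrete step. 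Iterating and passing to the limit $\Delta t\to 0$ via the uniform approximation of Proposition \ref{prop-converg} transfers the bound to the continuous-time solution of \eqref{g-eq}.

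The main obstacle I foresee is the transitional range $\rho<|\xi|\leq\rho/q$, where $|p\tau\xi|$ can drop below the induction threshold for small $\tau$ and the ansatz does not apply directly. There one must split the $\tau$-integral, handling the small-$\tau$ portion with the trivial bound $|\hat g|\leq 1$ coupled with the Gaussian control already established near zero, and absorbing the loss into the exponentially decaying contribution from large $\tau$. In the subcritical case $\mu=\nu<\lambda$ the strict inequality $p^\mu+q^\mu>1$ furnishes a margin that closes the estimate easily; in the borderline case $\mu=\lambda$ no such margin exists, so the constants $\kappa$ and $\rho$, and in particular the matching with the initial datum, must be tuned carefully to ensure that the iteration does not deteriorate as $t\to+\infty$.
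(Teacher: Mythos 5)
Your overall architecture is the same as the paper's: the low frequencies are handled by converting the uniform bound $|\hat g(\xi,t)|\leq 1/(1+k\xi^2)$ of Lemma \ref{close} into a Gaussian on a bounded interval, and the high frequencies are propagated by induction through the semi-implicit integral formula \fer{sol-d}, the arithmetic input being $p^\mu+q^\mu\geq p^\lambda+q^\lambda=1$ for $\mu=\min(\nu,\lambda)$. This is exactly how the paper proceeds, its proof of Theorem \ref{gev-prop} being a reduction to the proofs of Theorems \ref{teo-bounds} and \ref{thm.smoothness}.

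The gap sits precisely in the step you flag as the main obstacle, and the mechanism you propose there would fail. One cannot ``absorb the loss into the exponentially decaying contribution from large $\tau$'': the weight $\frac{r}{\Delta t}\,\tau^{-r/\Delta t-1}\,\d\tau$ is a probability measure on $[1,+\infty)$ whose mass concentrates entirely at $\tau=1^+$ as $\Delta t\to 0$, so the ``small-$\tau$ portion'' carries essentially the whole integral; if on a set of $\tau$'s of fixed positive measure the integrand only obeys the trivial bound $1$ instead of the target $e^{-\kappa|\xi|^\mu}$, the excess per step is of order $\Delta t$ and accumulates over $N=T/\Delta t$ iterations into an error that does not vanish. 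The estimate must therefore be closed \emph{pointwise in $\tau$}, for every $\tau\geq 1$, which is what Cases II and III in the paper's proof of Theorem \ref{thm.smoothness} accomplish: when $|q\tau\xi|\leq\rho$ (or also $|p\tau\xi|\leq\rho$) while $|\xi|>\rho>1$, the Gaussian bound on the below-threshold factor gives $|\hat g(q\tau\xi,t)|\leq e^{-kq^2\tau^2\xi^2}\leq e^{-kq^2|\xi|^\mu}$ because $\xi^2\geq|\xi|^\mu$ for $|\xi|>1$, and since $1-p^\mu\leq q^\mu\leq 1$ the product satisfies $e^{\kappa|\xi|^\mu}\,|\hat g(p\tau\xi,t)|\,|\hat g(q\tau\xi,t)|\leq e^{|\xi|^\mu(\kappa(1-p^\mu)-kq^2)}\leq 1$ as soon as $\kappa\leq kq^2$; the case where both arguments are below $\rho$ closes under $\kappa\leq k(p^2+q^2)$. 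In particular no delicate tuning is required in the borderline case $\mu=\lambda$: the margin is supplied by the quadratic decay near the origin, not by the $\tau$-integration. With this pointwise case analysis substituted for your splitting of the $\tau$-integral, your argument closes and coincides with the paper's.
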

The previous results are enough to prove the convergence in strong norms
towards the steady state $g_\infty$. In consequence of both Theorems
\ref{teo-bounds} and \ref{thm.smoothness},  we can show in fact the uniform in time propagation of
regularity in Sobolev spaces of high degree
 \[
 \Vert g \Vert_{\dot H^\eta(\R)}^2 = \int_\R |\xi|^{2\eta} |\hat g(\xi)|^2 \, \d \xi
 \]
with $\eta > 0$. It is enough to apply the technique developed in \cite{CGT} for
the Boltzmann equation for Maxwell molecules for showing that whenever the
equation propagates a tiny degree of regularity, as in Theorem \ref{teo-bounds},
this implies that the equation
propagates regularity of any degree.
% Therefore, the main problem to be solved
% is to prove \fer{bbs} uniformly in time.
Then, using the regularity in high
Sobolev spaces, we can pass from the weak convergence in $d_\alpha$-metric
obtained in \cite{PT06} into convergence in all Sobolev norms, and strong $L^1$
convergence at an explicit exponential rate for a certain class of initial
data. This is the objective of Section \ref{sec-strong} and the main result is
summarized as follows.

\begin{theo} \label{teo-L1}
Assume $0<q\leq p$ satisfying $p^2+q^2 <1$ and such that there
exists  $\tilde \delta \in (0,1)$ for which $\SSS (\delta) <0$
for $0< \delta < \tilde \delta$
 and let  $g_\infty$ be the unique stationary solution of
\eqref{g-eq}.
 Let the initial
density $g_0$ satisfy  the normalization conditions
\fer{norm1}, and
$$
\int_{\R}|v|^{2+\tilde\delta}\, g_0(v)\, \d v<+\infty.
$$
If in addition $g_0 \in {H^\eta(\R)}$ for some $\eta > 0$,
$\sqrt{g_0} \in {\dot H^{\nu}(\R)}$ for some $\nu > 0$, then the
solution $g(t)$ of (\ref{g-eq}) converges strongly in $L^1$ with
an exponential rate towards the stationary solution $g_\infty$,
i.e., there exist positive constants $C$ and $\gamma$ explicitly
computable such that
 \[
 \Vert g(t) - g_\infty \Vert_{L^1(\R)} \le C e^{-\gamma t},\quad t\geq 0.
 \]
\end{theo}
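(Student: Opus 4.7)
The plan is to combine the regularity propagation results of Theorem~\ref{teo-bounds} with the weak decay \eqref{dec1} from \cite{PT06}, then pass from weak convergence in $d_{2+\delta}$ to strong $L^1$ convergence via an interpolation argument in Fourier space. Throughout, the hypothesis $\sqrt{g_0}\in \dot H^{\nu}(\R)$ serves only to trigger the tail condition \eqref{con3}: by the argument recalled before Theorem~\ref{teo-bounds} (and in \cite{LT}), regularity of $\sqrt{g_0}$ in a homogeneous Sobolev space translates into a polynomial decay estimate of the type $|\hat g_0(\xi)|\le (1+\beta|\xi|)^{-\nu}$ for $|\xi|>R$, so Theorem~\ref{teo-bounds} applies and the bound \eqref{bbs} holds uniformly in time along the trajectory $g(t)$.

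The first step I would carry out is the \emph{uniform-in-time propagation of $H^\eta$ regularity}. Since the Fourier tail bound \eqref{bbs} is in force uniformly in $t\ge 0$, I would adapt the argument of \cite{CGT}: writing the equation \eqref{g-eq} satisfied by $|\hat g(\xi,t)|^2|\xi|^{2\eta}$ and integrating over $\R$, the contraction properties of the scaled collision operator together with \eqref{bbs} control the high-frequency contribution, while the drift term $\tfrac12(p^2+q^2-1)\xi\partial_\xi\hat g$ produces only a controlled linear term in $\|g\|_{\dot H^\eta}^2$. This yields an a priori differential inequality allowing to conclude that $\sup_{t\ge 0}\|g(t)\|_{H^\eta(\R)}\le M<+\infty$ whenever $g_0\in H^\eta(\R)$.

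Next, I would convert the weak $d_{2+\delta}$-convergence into uniform convergence in Fourier: combining \eqref{dec1} with \eqref{bbs} exactly as in the introduction gives
\begin{equation*}
\sup_{\xi\in\R}\bigl|\hat g(\xi,t)-\hat g_\infty(\xi)\bigr|\le C\,d_{2+\delta}(g_0,g_\infty)^{\nu'/(2+\delta+\nu')}\, e^{-\gamma_1 t},
\end{equation*}
with $\gamma_1=|\SSS(\delta)|\,\nu'/(2+\delta+\nu')>0$. Splitting Plancherel's identity at a threshold $R>0$,
\begin{equation*}
\|g(t)-g_\infty\|_{L^2}^2\le 2R\sup_{\xi}|\hat g-\hat g_\infty|^2+R^{-2\eta}\bigl(\|g(t)\|_{\dot H^\eta}^2+\|g_\infty\|_{\dot H^\eta}^2\bigr),
\end{equation*}
and optimizing $R$ yields exponential $L^2$-decay $\|g(t)-g_\infty\|_{L^2}\le C\,e^{-\gamma_2 t}$ with $\gamma_2=2\gamma_1\eta/(2\eta+1)$. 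Finally, using the uniform bound on the $(2+\tilde\delta)$-moment of $g(t)-g_\infty$ (a byproduct of the closed hierarchy of moment equations recalled in the introduction) and the standard splitting $\int|g-g_\infty|\,\d v=\int_{|v|<M}+\int_{|v|\ge M}$, Cauchy--Schwarz followed by optimization in $M$ transfers the $L^2$ decay to $L^1$ with a smaller but still strictly positive exponential rate $\gamma$.

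The main obstacle I expect is the $H^\eta$-propagation step: unlike the classical Maxwell case treated in \cite{CGT}, here the scaled equation carries the transport term $\tfrac12(p^2+q^2-1)\xi\partial_\xi\hat g$, which is formally a loss term at high frequencies and does not close nicely under the $\dot H^\eta$ norm unless the tail bound \eqref{bbs} is used to absorb the bilinear remainder. Getting the right algebraic manipulation so that the $\dot H^\eta$-seminorm satisfies a Gronwall-type differential inequality uniformly in $t$, rather than growing polynomially or exponentially, is the crux of the argument; once this is established, the interpolation and moment estimates leading to $L^1$ convergence are essentially automatic.
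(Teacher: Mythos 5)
Your proposal is correct and follows essentially the same route as the paper: derive the Fourier tail bound from $\sqrt{g_0}\in\dot H^{\nu}$, establish uniform-in-time propagation of the $H^\eta$ norm by absorbing the bilinear term $\int|\xi|^{2\eta}|\hat g(p\xi,t)|^2|\hat g(q\xi,t)|^2\,\d\xi$ at high frequencies, and then interpolate the exponential $d_{2+\delta}$ decay against the uniformly bounded moments and Sobolev norms to reach $L^1$. The only cosmetic differences are that the paper invokes the packaged interpolation inequalities of \cite{CGT} rather than your explicit Plancherel splitting, and in the Sobolev step it obtains the needed smallness of $|\hat g(p\xi,t)|$ for $|\xi|>R$, $t\geq t_0$ from the uniform convergence $\sup_\xi|\hat g(\xi,t)-\hat g_\infty(\xi)|\to 0$ combined with the decay of $\hat g_\infty$, rather than directly from the uniform tail bound \eqref{bbs} as you do; both mechanisms close the Gronwall inequality.
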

Thanks to the scaling invariance of the $L^1$ norm, Theorem \ref{teo-L1} allows to
deduce also the strong convergence of the original non scaled solution $f(t)$ to the self-similar state
$f_\infty (v, t)=\frac 1{\sqrt{E(t)}} g_\infty \left( \frac v {\sqrt{E(t)}}\right )$:
\[
 \Vert f(t) - f_\infty(t) \Vert_{L^1(\R)} \le C e^{-\gamma t},\quad t\geq 0.
\]

Finally, in Section \ref{lyap} we will discuss in details the relevant case in which $p+q =1$, which
corresponds to the one-dimensional inelastic Boltzmann equation for Maxwell
molecules \cite{BMP}. In this case, in fact, it is known that an explicit
stationary solution to equation \fer{g-eq} exists,
 \[
 \hat g_\infty(|\xi|) = (1 + |\xi|) e^{-|\xi|},
 \]
or, in the physical space
 \[
 g_\infty(v) = \frac 2{\pi(1+v^2)^2}.
 \]
This stationary solution is independent of the values of $p$ and $q$, and
within the set of functions $f$ which satisfy the normalization conditions
\fer{norm1}, is the minimum of the convex functional
 \[
 H(f) = -\int_\R \sqrt{f(v)} \, dv.
 \]
 This suggests the idea that $H$ is an entropy functional for the scaled
 equation \fer{g-eq}, but the proof of this conjecture would require to satisfy
 an inequality which is the analogous of the Shannon entropy power inequality
  \cite{Bla, Sta}, we are not able to prove.

It is interesting to remark that the ideas of the present paper, which are
valid for the $p+q =1$ case, can be fruitfully used for the three-dimensional
dissipative Boltzmann equation for Maxwell molecules, extending the validity of
the recent analysis of \cite{CCC08} to a general coefficient of restitution.
However, the proof of the analogous of Theorem \ref{teo-bounds} to the
three-dimensional case requires heavy computations, we will publish separately
elsewhere.

 %%%%%%%%%%%%%%%%%%%%%%%%%%%%%%%%%%%%%%%%%%%%%%%%%%%%%%%%%%%%%%%%%%%%%%%%%%%%%%%%%%%%%%%

\section{Preliminary results} \label{Par-Tos}

Let us consider the one-dimensional kinetic models of
Maxwell-Boltzmann type:
\begin{equation}\label{eq1}
\left\{
\begin{aligned}
&\partial_t f(v,t)= \int_{\R} \left (\frac{1}{J}f(v_*, t) f(w_*,
t) -f(v,t)f(w,t)\right )
\, \d w\\
& f(v,0)= f_0(v)
\end{aligned}
\right .
\end{equation}
where $f(v,t) : \R \times \R^+ \freccia \R$  denotes the
distribution of particles with velocity $v\in\R$ at the time
$t\geq 0$ and $(v_*, w_*)$  are the pre-collisional velocities
that generate post-collisional $(v,w)$:
\[
\left\{
\begin{aligned}
& v= pv_*+qw_*\\
& w = qv_*+pw_*
\end{aligned}
\right .
\]
and $0 <q \leq p$.

Moreover let us suppose  $f_0$ satisfying the normalization
conditions \fer{norm1}. In order to avoid the presence of the
Jacobian  we write equation \eqref{eq1} in weak form, namely
\[
\frac{\d }{\d t} \int_{\R}\Phi(v) f(v,t)\, \d v= \iint_{\R^2} f(v,
t) f(w, t)\left ( \Phi(v^*)-\Phi(v)\right )\, \d w\, \d v,
\]
and
\[
 \lim_{t\to 0}\int_{\R}\Phi(v) f(v,t)\, \d v=
\int_{\R}\Phi(v) f_0(v)\, \d v
\]
for any $\Phi$ bounded and continuous on $\R$. At least formally
by choosing $\Phi(v)=1$ and  $\Phi(v)=v$ one shows that, under
conditions \fer{norm1} both the mass and momentum are preserved.
By choosing $\Phi(v)=v^2$ we obtain that the energy of the
solution
$$
E(t)=\int_{\R}v^2f(v,t) \ \d v
$$
satisfies the equality
$$
E(t)={\rm e}^{(p^2+q^2-1)t}E(0).
$$
 Therefore, unless $p^2 +q^2 =1$, the energy is not preserved. In the
dissipative case $p+q=1$, one has additionally that the moment is
\emph{always} preserved, while the energy is decreasing.

 Following Bobylev \cite{Bob}, the weak
form of  equation \eqref{eq1} is equivalent to the equation in the
Fourier variables:
\[
\left\{
\begin{aligned}
&\partial_t \hat f(\xi,t)= \hat f(p\xi,t) \hat f(q\xi,t)-\hat f(\xi,t),\quad t>0\\
& \hat f(\xi,0)= \hat f_0(\xi).
\end{aligned}
\right .
\]
The existence and uniqueness of a solution for any initial data
$f_0$ satisfying  (\ref{norm1}) can be established in the same way
as for the elastic Kac equation.

\begin{theo}[Theorem of existence and uniqueness \cite{PT06, PT}]\label{esist}
We consider  $f_0$ satisfying the normalization conditions
\fer{norm1} and the following Cauchy problem: \be\label{cauchy}
\left\{
\begin{aligned}
&\partial_t \hat f(\xi,t)=  \hat f( p\xi,t) \hat f( q\xi ,t)-\hat
f(\xi,t),
 \quad t>0\\
& \hat f(\xi, 0)= \hat f_0(\xi).
\end{aligned}
\right . \ee Then, there exists a unique nonnegative solution
$f\in C^1 \left( [0, +\infty), L^1(\R)\right )$ to equation
(\ref{cauchy}) satisfying for all $t>0$:
\[
\int_{\R} f(v, t) \, \d v =1,\quad \int_{\R} f(v,t)\, v\,\d v=0.
\]
\end{theo}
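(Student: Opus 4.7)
The plan is to reformulate \eqref{cauchy} via Duhamel's principle as a fixed point problem in Fourier space, solve it by Banach contraction, and then transfer the characteristic function back to physical space through a Wild-type series in order to recover nonnegativity and the $L^1$ regularity statement.

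First I rewrite \eqref{cauchy} in the mild form
\[
\hat f(\xi,t) = e^{-t}\hat f_0(\xi) + \int_0^t e^{-(t-s)}\, \hat f(p\xi,s)\, \hat f(q\xi,s)\,\d s,
\]
and consider the map $T\varphi(\xi,t) := e^{-t}\hat f_0(\xi) + \int_0^t e^{-(t-s)} \varphi(p\xi,s)\varphi(q\xi,s)\,\d s$ on the ball $B=\{\varphi\in C([0,T],L^\infty(\R)) : \|\varphi\|_\infty\le 1\}$. Since $|\hat f_0|\le 1$, $T$ sends $B$ into itself, and the elementary inequality $|ab-cd|\le |a-c|+|b-d|$ on $[-1,1]^2$ yields
\[
\|T\varphi - T\psi\|_\infty \le 2(1-e^{-T})\|\varphi-\psi\|_\infty.
\]
Thus $T$ is a strict contraction as soon as $T<\log 2$, which gives local existence and uniqueness. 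The preserved bound $|\hat f|\le 1$ allows iteration without loss, producing a unique global solution $\hat f \in C([0,+\infty),L^\infty(\R))$.

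To obtain nonnegativity and the $C^1$ regularity in $L^1$, I turn to physical space. The gain term of \eqref{eq1} can be written as $Q^+(f,f)(v) = (f_p*f_q)(v)$ with $f_p(v)=p^{-1}f(v/p)$, so it is a bilinear convolution operator that preserves both nonnegativity and the $L^1$ norm of probability densities. Setting $F_0=f_0$ and $F_{n+1}=\frac{1}{n+1}\sum_{k=0}^n Q^+(F_k,F_{n-k})$, I form the Wild sum
\[
f(v,t) = \sum_{n\ge 0} e^{-t}(1-e^{-t})^n F_n(v),
\]
a convex combination of nonnegative probability densities, which converges absolutely in $L^1(\R)$, is itself a probability density, and satisfies the Duhamel identity in $L^1$. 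Taking the Fourier transform term by term reproduces the fixed point of $T$, so by the uniqueness of that fixed point the two constructions coincide; in particular the abstract Fourier solution is the Fourier transform of a nonnegative $L^1$ density.

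Mass and momentum preservation then follow from the weak form of \eqref{eq1} by choosing $\Phi\equiv 1$ and $\Phi(v)=v$, using $v^*-v=(p-1)v+qw$ together with the normalization \fer{norm1}. The $C^1$ regularity in $L^1$ is read off the Duhamel identity, whose right-hand side is continuous in $t$ with values in $L^1$ and has $L^1$-valued time derivative $Q^+(f,f)-f$. The subtlest point is the passage from the Fourier fixed point to positivity: a sign estimate is not available directly in Fourier variables and is obtained only through the Wild detour, where each iterate is manifestly a convolution of probability densities.
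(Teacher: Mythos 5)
Your proposal is correct and is essentially the argument the paper relies on: the paper does not prove Theorem \ref{esist} itself but states that existence and uniqueness ``can be established in the same way as for the elastic Kac equation'' (citing \cite{PT06, PT}), and that standard route is exactly your combination of the Fourier--Duhamel fixed point for uniqueness with the Wild sum $f(t)=\sum_{n\ge 0}e^{-t}(1-e^{-t})^nF_n$, $F_{n+1}=\frac{1}{n+1}\sum_{k=0}^{n}Q^+(F_k,F_{n-k})$, to obtain nonnegativity and the $C^1\left([0,+\infty),L^1(\R)\right)$ regularity. All the individual steps (the contraction constant $2(1-e^{-T})$, the identification of the two constructions via uniqueness of the fixed point, and the conservation of mass and momentum from the weak form with $\Phi\equiv 1$ and $\Phi(v)=v$ using $\int vf_0\,\d v=0$) check out.
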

In order to investigate some properties of the solution is useful
to introduce the following rescaling
$$
\hat g(\xi,t)=\hat f\left(\frac{\xi}{\sqrt{E(t)}},t\right).
$$
The function $\hat g(t)$ preserves the energy and it satisfies the
equation:
\begin{equation}\label{geq1}
\left\{
\begin{aligned}
&\partial_t \hat g(\xi,t)+\frac
12\left(p^2+q^2-1\right)\xi\partial_\xi\hat g(\xi,t)=
 \hat g(p\xi,t) \hat g(q\xi,t)-\hat g(\xi,t),\quad t>0\\
& \hat g(\xi,0) =\hat f_0(\xi):= \hat g_0(\xi).
\end{aligned}
\right .
\end{equation}
In the relevant case $p+q=1$, equation \fer{geq1} admits an
explicit stationary state \cite{BMP}
$$
\hat
 g_{\infty}(\xi)=(1+|\xi|){\rm
 e}^{-|\xi|}.
 $$
Pareschi and Toscani proved in \cite{PT06} that in a quite large
range of values of the mixing para\-me\-ters, a  unique stationary
state exists and the unique weak solution $g(t)$ converges to
$g_{\infty}$. In all the cases $p+q\neq 1$ (excluding $p^2 + q^2
=1$) the stationary state is not explicit, even if some properties
can be extracted from the analysis of the evolution equation. The
convergence takes place in a Fourier distance introduced in
\cite{GTW} to investigate the trend to equilibrium of the solution
to the Boltzmann equation for Maxwellian molecules. We recall the
definition of this distance. Let  $0\leq \delta<1$ and let
\[
M_{2+\delta} =\left\{ f\geq 0\, :\  \int_\R f(v)\, \d v=1, \
\int_\R v\, f(v) \, \d v =0,\ \int_\R v^2\, f(v)\, \d v=1,\
\int_\R |v|^{2+\delta}\,  f(v)\, \d v < \infty, \right \}.
\]
We introduce on ${M}_{2+\delta}$ the distance:
$$
d_{2+\delta}(f,g)=\sup_{\xi\in \R}\frac{|\hat f(\xi)-\hat
g(\xi)|}{|\xi|^{2+\delta}}.
$$
Let us define, for $\delta \ge 0$,
 \[
 \SSS (\delta) = p^{2+\delta} + q^{2+\delta} -1 -\frac{2+\delta}2\left(p^2+q^2
 -1\right).
 \]
The result of Pareschi and Toscani is as follows.
\begin{theo}[Pareschi--Toscani \cite{PT06}]\label{teo-pt}
Assume $0<q\leq p$ and such that there exists  $\tilde \delta \in
(0,1)$ for which $\SSS (\delta) <0$ for $0< \delta < \tilde
\delta$. Let $g(t)$ be the weak solution of  equation
(\ref{geq1}), corresponding to the initial density $g_0$
satisfying the normalization conditions \eqref{norm1} and
$$
\int_{\R}|v|^{2+\tilde \delta}\, g_0(v)\, \d v<+\infty.
$$
Then $g(t)$ satisfies for $0<\delta \leq  \tilde \delta$ and for
$c_\delta >0$:
$$
\int_{\R}|v|^{2+\delta}\, g(v,t)\, \d v\leq c_\delta, \quad t\geq
0.
$$
Moreover, there exists a unique stationary state $g_\infty$ to
equation (\ref{geq1}) which satisfies for $0<\delta <\tilde
\delta$:
\[
\int_{\R}|v|^{2+\delta}\, g_\infty(v)\, \d v<+\infty,
\]
$g(t)$ converges exponentially fast in Fourier metric towards
$g_\infty$ and the following bound holds for $0<\delta <\tilde
\delta$:
$$
d_{2+\delta}(g(t),g_\infty)\leq {\rm e}^{-|{\cal
S}_{p,q}(\delta)|t}d_{2+\delta}(g_0,g_\infty).
$$
\end{theo}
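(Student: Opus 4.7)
The plan is to combine three ingredients: (i) a uniform-in-time bound on the $(2+\delta)$-moment of the scaled solution $g(t)$; (ii) an exponential contraction in $d_{2+\delta}$ between two scaled solutions with the same normalisations \fer{norm1}; and (iii) a Cauchy/compactness argument that produces the steady state $g_\infty$ and transfers the contraction to $d_{2+\delta}(g(t),g_\infty)$. The key identity tying everything together is the algebraic one $\SSS(\delta)=p^{2+\delta}+q^{2+\delta}-1-\tfrac{2+\delta}{2}(p^2+q^2-1)$, which will appear naturally twice: once from the scaling of the collision gain term, and once from the drift in \fer{geq1}.

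For the moment bound I would work first on the unscaled equation \fer{eq1}. Testing the weak form against $\Phi(v)=|v|^{2+\delta}$ and using a Povzner-type expansion
\[
|pv+qw|^{2+\delta} \le p^{2+\delta}|v|^{2+\delta} + q^{2+\delta}|w|^{2+\delta} + R_\delta(v,w),
\]
with $R_\delta$ a combination of lower order monomials of total degree $2+\delta$, together with the conservation laws and the explicit formula for $E(t)$, yields the differential inequality
\[
\frac{\d}{\d t} m_{2+\delta}(f)(t) \le \bigl(p^{2+\delta}+q^{2+\delta}-1\bigr)\, m_{2+\delta}(f)(t) + C.
\]
Gronwall provides $m_{2+\delta}(f)(t) \le C' e^{(p^{2+\delta}+q^{2+\delta}-1)t}$. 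Using $m_{2+\delta}(g)(t) = E(t)^{-(2+\delta)/2}\, m_{2+\delta}(f)(t)$ and plugging in $E(t)=e^{(p^2+q^2-1)t}$ produces $m_{2+\delta}(g)(t) \le c_\delta\, e^{\SSS(\delta)t}$, which is uniformly bounded because $\SSS(\delta)<0$.

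For the contraction I would operate directly on \fer{geq1}. With $\hat h=\hat g_1-\hat g_2$,
\[
\partial_t \hat h + \tfrac{1}{2}(p^2+q^2-1)\,\xi\,\partial_\xi \hat h = \hat g_1(p\xi)\hat g_1(q\xi) - \hat g_2(p\xi)\hat g_2(q\xi) - \hat h.
\]
Writing the nonlinear term as $\hat g_1(p\xi)[\hat g_1(q\xi)-\hat g_2(q\xi)] + \hat g_2(q\xi)[\hat g_1(p\xi)-\hat g_2(p\xi)]$ and using $|\hat g_i|\le 1$, it is bounded in absolute value by $(p^{2+\delta}+q^{2+\delta})|\xi|^{2+\delta} D(t)$, where $D(t):=d_{2+\delta}(g_1(t),g_2(t))$. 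Setting $u(\xi,t)=\hat h(\xi,t)/|\xi|^{2+\delta}$ and using $\xi\partial_\xi(|\xi|^{2+\delta}u)=(2+\delta)|\xi|^{2+\delta}u+|\xi|^{2+\delta}\xi\partial_\xi u$, the equation for $u$ has an extra dissipative term $\tfrac{2+\delta}{2}(p^2+q^2-1)u$. Integrating along the characteristics $\dot\xi=\tfrac{1}{2}(p^2+q^2-1)\xi$ absorbs the first-order term; taking the supremum in the initial $\xi_0$ and applying Gronwall gives
\[
D(t) \le D(0)\, e^{\SSS(\delta)\, t},
\]
i.e. the announced exponential contraction.

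The remaining step is routine: the contraction implies $\{g(t)\}_{t\ge0}$ is Cauchy in $d_{2+\delta}$, hence $\hat g(\cdot,t)$ converges pointwise to some $\hat g_\infty$. Uniform tightness, provided by the bounded $(2+\delta)$-moment from step one, ensures $g_\infty\in M_{2+\delta}$ is a genuine probability density; passing to the limit in \fer{geq1} identifies it as a stationary solution, and uniqueness follows by applying the contraction to two candidate steady states. The subtle point I expect to be most delicate is step (ii): one must absorb the drift $\xi\partial_\xi\hat h$ via characteristics \emph{before} taking the supremum in $\xi$, otherwise the contribution $-\tfrac{2+\delta}{2}(p^2+q^2-1)$ that makes $\SSS(\delta)$ appear (instead of only $p^{2+\delta}+q^{2+\delta}-1$) is lost and one does not get a contraction in the dissipative regime.
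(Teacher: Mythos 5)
This theorem is imported from \cite{PT06}: the paper states it in the Preliminaries and gives no proof, so your proposal can only be measured against the standard argument of that reference, which is indeed the route you take. Your steps (ii) and (iii) are correct: the decomposition of the gain difference, the bound $|\hat g_i|\le 1$ giving the factor $p^{2+\delta}+q^{2+\delta}$, the passage to $u=\hat h/|\xi|^{2+\delta}$ and the integration along the characteristics of the drift so that the extra coefficient $-\tfrac{2+\delta}{2}(p^2+q^2-1)$ combines with $p^{2+\delta}+q^{2+\delta}-1$ to produce exactly $\SSS(\delta)$, and the Cauchy/fixed-point construction of $g_\infty$ (note that the Cauchy property $d_{2+\delta}(g(t+s),g(t))\le e^{\SSS(\delta)t}d_{2+\delta}(g(s),g_0)$ is only useful because $\sup_s d_{2+\delta}(g(s),g_0)<\infty$, which is where the uniform moment bound is needed — worth saying explicitly).

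Step (i), however, contains a genuine error. The Povzner remainder $\iint f(v,t)f(w,t)R_\delta(v,w)\,\d v\,\d w$ is not a constant: the cross terms are of the form $m_{1+\delta}(f)\,m_1(f)$, and interpolating against $m_0=1$ and $m_2(f)=E(t)$ bounds them by $C\,E(t)^{(2+\delta)/2}$, which grows or decays in time. If you replace this by a constant $C$, Gronwall does \emph{not} give $m_{2+\delta}(f)(t)\le C'e^{(p^{2+\delta}+q^{2+\delta}-1)t}$ in the dissipative regime: the solution of $\dot m\le am+C$ with $a<0$ and $C>0$ tends to $C/|a|>0$ rather than decaying, so after rescaling the scaled moment would blow up. Moreover the conclusion you state, $m_{2+\delta}(g)(t)\le c_\delta e^{\SSS(\delta)t}\to 0$, is impossible on its face: by Jensen's inequality, $\int_\R|v|^{2+\delta}g(v,t)\,\d v\ge\left(\int_\R v^2 g(v,t)\,\d v\right)^{(2+\delta)/2}=1$ for all $t$. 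The correct bookkeeping is that the remainder is $\le C\,E(t)^{(2+\delta)/2}$, so that for the scaled moment $m_{2+\delta}(g)=E^{-(2+\delta)/2}m_{2+\delta}(f)$ one gets $\tfrac{\d}{\d t}m_{2+\delta}(g)\le\SSS(\delta)\,m_{2+\delta}(g)+C$, whence, since $\SSS(\delta)<0$, the uniform bound $m_{2+\delta}(g)(t)\le\max\bigl(m_{2+\delta}(g_0),\,C/|\SSS(\delta)|\bigr)$ — a bound, not decay, which is exactly what the theorem asserts. With this repair the rest of your argument goes through.
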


 %%%%%%%%%%%%%%%%%%%%%%%%%%%%%%%%%%%%%%%%%%%%%%%%%%%%%%%%%%%%%%%%%%%%%%%%%%%%%%%%%%%%%%%

\section{An iteration process}\label{approssim}
 The goal of this section is to build up a sequence of functions
 $\left\{g^N(\xi,t)\right \}$ which approximates uniformely the solution $\hat g(\xi,t)$.
 In order to do this, for any fixed $T>0$ we consider firstly a semi-implicit discretization in time of equation
 (\ref{geq1}) by  partitioning  the  interval  $[0,T]$ into $N$
 subintervals  and we define thus
 the approximate solution at any time $t=j\frac{T}{N}$ for $j=0,\dots,N$.
Secondly, we define $g^N(\xi,t)$ on the whole interval $[0,T]$ by
 interpolation and lastly we show the convergence of the approximation to the solution.

In order to lighten the reading, we will postpone almost all the
proofs of this technical section at the end of the paper in
Appendix \ref{annexe}.

\subsection*{The approximate equation}
In this paragraph  we define the approximate solution at any time
$t=j\frac{T}{N}$ for $j=0,\dots,N$ by an iteration process and
study some of its properties.

Let $T>0$  and $\Delta t=\frac {T}{N}$ for $N\in \N$, $N>T$. Let
 $\hat\phi_{j}^N(\xi)$, $j=0,\dots, N$ be the sequence:
\begin{equation}\label{eqdis}
\left\{
\begin{aligned}
&\hat\phi_0^N(\xi)=\hat g_0(\xi)\\
&\frac{\hat\phi_{j+1}^N(\xi)-\hat\phi_j^N(\xi)}{\Delta
t}=\frac1{r} \xi\ \frac{\d}{\d \xi}{\hat
\phi_{j+1}}^N(\xi)+\hat\phi_j^N(p\xi)\hat\phi_j^N(q\xi)-\hat\phi_j^N(\xi),
\quad j=0,\dots, N-1
\end{aligned}
\right .
\end{equation}
where $\frac 1r=\frac{1-p^2-q^2}{2}$.
\begin{prop} \label{intform}
Assume $0<q\leq p$.
%satisfying $p^2+q^2<1$
If $g_0$ verifies the normalization conditions \eqref{norm1},
% \[
% g_0(v) \geq 0, \quad \int_\R g_0(v)\, \d v=1\, \quad \int_\R v\, g_0(v)
% \, \d v=0,\quad \int_\R v^2 \, g(v)\, \d v=1,\quad \int_\R
% |v|^{2+\delta} \, g(v)\, \d v =C <+\infty,
% \]
% for $\delta \in (0,1)$,
then  there exists a unique sequence of bounded function
  $\hat\phi_{j}^N$ for $j=1,\dots,N$ satisfying (\ref{eqdis}).
\end{prop}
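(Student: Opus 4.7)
The plan is to regard the recursion (\ref{eqdis}) as a first-order inhomogeneous linear ODE in $\xi$ for the unknown $\hat\phi_{j+1}^N$, with source built out of the already defined $\hat\phi_j^N$, and to solve it explicitly. After rearranging, the equation takes the form
\begin{equation*}
\hat\phi_{j+1}^N(\xi)-\frac{\Delta t}{r}\,\xi\,\frac{\d}{\d\xi}\hat\phi_{j+1}^N(\xi)=F_j(\xi),\qquad F_j(\xi):=(1-\Delta t)\hat\phi_j^N(\xi)+\Delta t\,\hat\phi_j^N(p\xi)\hat\phi_j^N(q\xi).
\end{equation*}
The hypothesis $N>T$ gives $\Delta t=T/N<1$, so $F_j$ is a genuine convex combination of $\hat\phi_j^N(\xi)$ and $\hat\phi_j^N(p\xi)\hat\phi_j^N(q\xi)$.

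I would argue by induction on $j$. The base case $\hat\phi_0^N=\hat g_0$ is bounded with $|\hat g_0|\leq 1$, by the probability-density normalization (\ref{norm1}). Assuming $|\hat\phi_j^N|\leq 1$, we have $|F_j|\leq 1$. I would then solve the ODE by the standard integrating-factor trick; equivalently, the change of variables $u=\log|\xi|$ turns it into a constant coefficient linear equation whose homogeneous solutions are $C|\xi|^{r/\Delta t}$. Selecting the unique particular solution which is bounded on $\R$ yields the explicit representation
\begin{equation*}
\hat\phi_{j+1}^N(\xi)=\frac{r}{\Delta t}\int_1^{+\infty}F_j(\tau\xi)\,\frac{\d\tau}{\tau^{r/\Delta t+1}}
\end{equation*}
in the dissipative case $r>0$, which is precisely the discrete analogue of formula (\ref{sol-d}), and the corresponding integral over $[0,1]$ in the non-dissipative case $r<0$, matching (\ref{sol-non diss}). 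A direct computation shows that the kernel $\tfrac{r}{\Delta t}\tau^{-r/\Delta t-1}$ has unit mass on the relevant interval of integration, so $|\hat\phi_{j+1}^N|\leq 1$ follows from $|F_j|\leq 1$ and the induction closes.

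Uniqueness in the class of bounded functions is automatic from the same analysis: any two bounded solutions of the ODE differ by a multiple of $|\xi|^{r/\Delta t}$, which is unbounded at infinity when $r>0$ and singular at the origin when $r<0$, forcing the constant to vanish. I do not expect a serious obstacle here; the only technical points worth watching are the convergence of the improper integral at both endpoints and the consistency condition $\hat\phi_{j+1}^N(0)=F_j(0)=1$, both of which follow immediately from the bound on $F_j$ and the integrability of the kernel. The representation obtained in this proof, besides giving existence and uniqueness, is the very tool exploited in the subsequent sections of the paper.
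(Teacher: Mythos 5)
Your proposal is correct and follows essentially the same route as the paper: both treat \eqref{eqdis} as a singular first-order linear ODE in $\xi$ for $\hat\phi_{j+1}^N$ with source $F_j$, integrate with the factor $|\xi|^{-r/\Delta t}$ on $[\xi,+\infty)$ (resp.\ $[0,\xi]$ when $r<0$), discard the unbounded homogeneous solution $C|\xi|^{r/\Delta t}$ to get uniqueness, and arrive at the same unit-mass integral representation \eqref{sol-d}--\eqref{sol-non diss}, which propagates the bound $|\hat\phi_{j+1}^N|\leq 1$ by induction.
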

\begin{proof}
Let us begin by proving that  $\hat\phi_{1}^N$ is well defined.
 In a similar way as
in \cite{BC03} we multiply equation (\ref{eqdis}) by
$\left(-\frac{r}{\Delta t}\right)\ \mathrm{sgn\xspace} \ \xi \
|\xi|^{-\frac{r}{\Delta t}-1}$ and  obtain
$$
\frac{\d}{\d \xi}\left(\hat \phi_{1}^N(\xi) |\xi|^{-\frac r{\Delta
t}}\right)=\left(-\frac{r}{\Delta t}\right) \ \mathrm{sgn\xspace}
\ \xi \ |\xi|^{-\frac{r}{\Delta t}-1}\left(\Delta t\
\hat\phi_0^N(p\xi)\hat\phi_0^N(q\xi)+(1-\Delta t)\ \hat
\phi_0^N(\xi)\right).
$$
We assume now $p^2+q^2<1$. For any positive $\xi$ we integrate on
$[\xi,+\infty)$ and since $\hat\phi_0^N(\xi) $ is bounded we get:
$$
\hat\phi_{1}^N(\xi)|\xi|^{-\frac r{\Delta t}}=\frac{r}{\Delta
t}\int_\xi^{+\infty}\left(\Delta t\
\hat\phi_0^N(ps)\hat\phi_0^N(qs)+(1-\Delta t)\ \hat
\phi_0^N(s)\right) s^{-\frac r{\Delta t}-1}\d s.
$$
Finally by the change of variables $\tau=s/\xi$ we are led to:
\begin{equation}\label{inteq}
\hat\phi_{1}^N(\xi)=\frac{r}{\Delta t}\
\int_{1}^{+\infty}\left(\Delta t \
\hat\phi_0^N(p\tau\xi)\hat\phi_0^N(q\tau\xi)+(1-\Delta t)\ \hat
\phi_0^N(\tau\xi)\right)\frac{\d \tau}{\tau^{\frac r{\Delta
t}+1}}.
\end{equation}
For any negative $\xi$ we integrate on $(-\infty,\xi]$ and in a
similar way we obtain that equality (\ref{inteq}) holds for any
$\xi\neq 0$. Moreover since  $g_0$ satisfies the conditions (3)
then $\hat\phi_{0}^N=\hat g_0$  belongs  to ${\cal C}^1(\R)$ and
\begin{equation}\label{gzero}
\hat g_0(0)=1\ \ \ \left|\frac{\d\,\hat g_0(\xi)}{\d\,\xi}
\right|\leq 1\ \ \ \frac{\d\,\hat g_0}{\d\,\xi} (0)=0.
\end{equation}
Therefore  the function $\hat\phi_{1}^N$ can be defined by
continuity in $\xi=0$ and it is the unique, bounded and ${\cal
C}^1(\R)$ solution of (\ref{eqdis}).  By an iteration argument the
same conclusion holds for any $\hat\phi_{j}^N$ obtaining for
$j=0,\dots, N-1$
\[
\hat\phi_{j+1}^N(\xi)=\frac{r}{\Delta t}\
\int_{1}^{+\infty}\left(\Delta t \
\hat\phi_j^N(p\tau\xi)\hat\phi_j^N(q\tau\xi)+(1-\Delta t)\ \hat
\phi_j^N(\tau\xi)\right)\frac{\d \tau}{\tau^{\frac r{\Delta
t}+1}}.
\]
For $p^2+q^2 >1$, we repeat the same argument by integrating on
$[0,\xi]$ and $[\xi, 0]$ and we get in the end
\[
\hat\phi_{j+1}^N(\xi)=-\frac{r}{\Delta t}\
\int_{0}^{1}\left(\Delta t \
\hat\phi_j^N(p\tau\xi)\hat\phi_j^N(q\tau\xi)+(1-\Delta t)\ \hat
\phi_j^N(\tau\xi)\right)\frac{\d \tau}{\tau^{\frac r{\Delta
t}+1}}.
\]
\end{proof}
Applying Fubini's theorem  we can remark that for $p^2+q^2 <1$ any
$\hat\phi_{j+1}^N(\xi)$ is the Fourier transform of
$\phi_{j+1}^N(v)$ where for $j=0,\dots, N-1$ \be
\label{succ-spazio} \left\{
\begin{aligned}
&\phi_0^N(v)= g_0(v)\\
& \phi_{j+1}^N(v)=\frac{r}{\Delta t}\
\int_{1}^{+\infty}\left(\Delta t \ \frac 1 \tau
\left(\phi_{j,p}^N\ast \phi_{j, q}^N\right ) \left(\frac v
\tau\right )+(1-\Delta t)\ \frac 1 \tau \phi_j^N\left(\frac
v\tau\right )\right)\frac{\d \tau}{\tau^{\frac r{\Delta t}+1}},
\end{aligned}
\right . \ee with $\phi_{j,p}^N(v)= \frac 1p \phi_j^N\left(\frac
vp\right )$ and similarly for $\phi_{j,q}^N$. Analogously, for
$p^2+q^2 >1$, $\hat\phi_{j+1}^N(\xi)$ is the Fourier transform of
$\phi_{j+1}^N(v)$ where for $j=0,\dots, N-1$
\be\label{fourier-nondiss} \left\{
\begin{aligned}
&\phi_0^N(v)= g_0(v)\\
& \phi_{j+1}^N(v)=-\frac{r}{\Delta t}\ \int_{0}^{1}\left(\Delta t
\ \frac 1 \tau \left(\phi_{j,p}^N\ast \phi_{j, q}^N\right )
\left(\frac v \tau\right )+(1-\Delta t)\ \frac 1 \tau
\phi_j^N\left(\frac v\tau\right )\right)\frac{\d \tau}{\tau^{\frac
r{\Delta t}+1}}.
\end{aligned}
\right . \ee

\medskip
In what follows, the function $\SSS(\delta)$ is defined as in
\eqref{key}.

\begin{prop}\label{prop-momenti}
Assume $0<q\leq p$ such that there exists $\tilde \delta \in
(0,1)$ for which $\SSS (\delta) <0$ for $0< \delta <\tilde
\delta$.
% For $T>0$, $N\in \N$, $N>T$, $\Delta t = \frac TN$
% and $\frac 1r = \frac 12 (1-p^2-q^2)$
Let $\phi_j^N$, for $j=0,\dots, N$ defined as in
\eqref{succ-spazio} or \eqref{fourier-nondiss}, with $g_0$
satisfying the normalization conditions \eqref{norm1} and $\int_\R
 |v|^{2+ \tilde \delta} \, g_0(v)\, \d v <+\infty$.
% \[
% g_0(v) \geq 0, \quad \int g_0(v)\, \d v=1\, \quad \int v\, g_0(v)
% \, \d v=0,\quad \int v^2 \, g(v)\, \d v=1,\quad \int
% |v|^{2+\delta} \, g(v)\, \d v =C <+\infty,
% \]
% for $\delta \in (0,1)$.
Then, for $0<\delta <\tilde \delta$ there exists $C_\delta >0$
such that for $N$ large enough (depending on $\delta$, $T$, $p$
and $q$), for $j=0,\dots, N$ we get \be \label{momenti}
\phi_j^N(v) \geq 0, \ \int_\R \phi_j^N(v)\, \d v=1,\, \ \int_\R
v\, \phi_j^N(v) \, \d v=0,\ \int_\R v^2 \, \phi_j^N(v)\, \d v=1,\
\int_\R |v|^{2+\delta} \, \phi_j^N(v)\, \d v \leq C_\delta. \ee
\end{prop}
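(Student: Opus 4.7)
The plan is to establish the five properties by induction on $j$, the base case $j=0$ being immediate from the hypotheses on $g_0$. For the inductive step, I would work primarily in the dissipative case (in which the probability measure $\frac{r}{\Delta t}\tau^{-r/\Delta t-1}\d\tau$ on $[1,+\infty)$ has total mass $1$), the non-dissipative case being symmetric after noting that $-r>0$ makes $-\frac{r}{\Delta t}\tau^{-r/\Delta t-1}\d\tau$ a probability measure on $(0,1]$.

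Non-negativity of $\phi_{j+1}^N$ is immediate from \fer{succ-spazio}: for $N$ large enough so that $\Delta t <1$, the integrand is a nonnegative combination of two nonnegative densities (the convolution $\phi_{j,p}^N\ast\phi_{j,q}^N$ and the rescaling $\frac 1\tau \phi_j^N(\cdot/\tau)$). Preservation of the first three moments is then a direct computation: integrating \fer{succ-spazio} against $1$, $v$, $v^2$ respectively, and using the scaling relations
$$
\int_\R \frac 1\tau \phi_j^N\!\left(\frac v\tau\right)\d v = 1,\quad \int_\R v\cdot \frac 1\tau \phi_j^N\!\left(\frac v\tau\right)\d v = 0,\quad \int_\R v^2\cdot \frac 1\tau \phi_j^N\!\left(\frac v\tau\right)\d v = \tau^2,
$$
together with $\int v^2 (\phi_{j,p}^N\ast\phi_{j,q}^N)\,\d v = p^2+q^2$ (zero means and unit variances inherited from the inductive hypothesis), one reduces each verification to explicit integrals $\int_1^{+\infty}\tau^{k}\tau^{-r/\Delta t-1}\d\tau$ for $k\in\{0,2\}$, valid for $N$ large. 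For $k=2$ the identity $r(p^2+q^2-1)=-2$ (which is precisely the definition $1/r=(1-p^2-q^2)/2$) makes the unit second moment survive exactly.

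For the $(2+\delta)$-moment, the crucial ingredient is a Povzner-type inequality: since $X$ and $Y$ are independent with law $\phi_j^N$, zero mean and unit variance, $pX+qY$ has law $\phi_{j,p}^N\ast\phi_{j,q}^N$ and
$$
\int_\R |v|^{2+\delta}(\phi_{j,p}^N\ast\phi_{j,q}^N)(v)\,\d v \le (p^{2+\delta}+q^{2+\delta})\,M_{2+\delta}^j + C_{p,q,\delta},
$$
where $M_{2+\delta}^j:=\int |v|^{2+\delta}\phi_j^N\,\d v$ and the remainder depends only on the (bounded) lower moments. Inserting this bound in \fer{succ-spazio} and computing $\int_1^{+\infty}\tau^{2+\delta-r/\Delta t-1}\d\tau = \Delta t/(r-(2+\delta)\Delta t)$ (convergent as soon as $\Delta t < r/(2+\delta)$, i.e.\ for $N$ large depending on $\delta$, $T$, $p$, $q$), I obtain a linear recursion
$$
M_{2+\delta}^{j+1} \le \frac{r\bigl[1+\Delta t(p^{2+\delta}+q^{2+\delta}-1)\bigr]}{r-(2+\delta)\Delta t}\,M_{2+\delta}^j + \frac{r\,\Delta t\,C_{p,q,\delta}}{r-(2+\delta)\Delta t}.
$$
The algebraic identity that drives the argument is that the numerator minus the denominator equals $\Delta t\, r\, \SSS(\delta)$, so the coefficient on $M_{2+\delta}^j$ rewrites as $1+\frac{\Delta t\, r\, \SSS(\delta)}{r-(2+\delta)\Delta t}$; since $r>0$ and $\SSS(\delta)<0$ by assumption, this is $<1$ uniformly in $j$ for $N$ large. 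The recursion $M_{2+\delta}^{j+1}\le a M_{2+\delta}^j+b$ with $a<1$ yields $M_{2+\delta}^j\le \max\bigl(M_{2+\delta}^0,\,b/(1-a)\bigr)=:C_\delta$, independent of $j$ and $N$.

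The main obstacle is the Povzner-type estimate on the convolution $(2+\delta)$-moment and, more subtly, checking that the algebraic combination of the two integrals $\int_1^{+\infty}\tau^{k-r/\Delta t-1}\d\tau$ ($k=0,2+\delta$) still produces a contraction factor strictly smaller than $1$; the identification of the "residual" as $\Delta t\cdot r\cdot\SSS(\delta)$ is the precise point where the sign hypothesis on $\SSS$ enters the propagation of moments, and it is exactly this algebraic cancellation that lets the discrete scheme mimic the continuous moment bound of Theorem \ref{teo-pt}.
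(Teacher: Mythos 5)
Your proposal is correct and follows essentially the same route as the paper: exact conservation of the moments of order $0,1,2$ via the convex-combination integral formula and the scaling identity, a Povzner-type bound $\int |v|^{2+\delta}(\phi_{j,p}^N\ast\phi_{j,q}^N)\,\d v\le (p^{2+\delta}+q^{2+\delta})M_{2+\delta}^j+C_{p,q,\delta}$ (which the paper derives by expanding $|v|^{2+\delta}=((v-w)+w)^2\,|(v-w)+w|^{\delta}$ and using H\"older on the $\delta$-moment), and a discrete contraction whose factor is governed by the sign of $\SSS(\delta)$. The only differences are cosmetic: you assert the Povzner inequality rather than deriving it, and your exact identity ``numerator minus denominator $=\Delta t\, r\, \SSS(\delta)$'' is a cleaner version of the paper's expansion of the prefactor up to $o(\Delta t)$, both yielding the same uniform bound $C_\delta$.
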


\begin{rem} The equalities in (\ref{momenti}) imply that there exists
$C>0$ such that
\begin{equation}
\label{mom} \left| \hat\phi_j^N(\xi)\right|\leq 1,\ \ \ \left|
\frac{\d\hat\phi_j^N(\xi)}{\d \xi}\right|\leq C\ \ \ {\rm and}\ \
\ \left| \frac{\d^2\hat\phi_j^N(\xi)}{\d \xi^2}\right|\leq 1\ \ \
\end{equation}
 for any $ \xi\in \R\ $, for  $N$ large enough   and  for
$j=0,\dots,N$. The first two inequalities have already been proved
in the proof of  Proposition \ref{intform}.

\end{rem}
\subsection*{Definition of the sequence $\left\{g^N(\xi,t)\right \}_N$}
We are now in position to define the sequence
$\left\{g^N(\xi,t)\right \}_N$. We define $g^N(\xi,j\frac
TN)=\hat\phi_j^N(\xi)$ for $j=0,\dots,N$. We extend the definition
on the whole interval by interpolation. More precisely, let us
define:
\[
g^N(\xi,t)=
\begin{cases}
\hat g_0(\xi)&t=0\\
\alpha(t)\hat\phi^N_{K_N-1}(\xi)+(1-\alpha(t))\hat\phi^N_{K_N}(\xi)&
0<t\leq T
\end{cases}
\]
where for  $0<t\leq T$ we have $(K_N-1)\frac TN<t\leq K_N \frac
TN$ for  $K_N\in \{1,\dots,N\}$ and more precisely there is a
function $0\leq \alpha(t)<1$ such that $t=\alpha(t)(K_N-1)\frac
TN+(1-\alpha(t))K_N \frac TN$. Any $g^N(\xi,t)$ is continuous on
$\R\times[0,T]$ and for any $t\in [0,T]$ it belongs to ${\cal
C}^2(\R)$.

\bigskip

The result of convergence is therefore as follows.
\begin{prop}\label{prop-converg}
There is a subsequence $\left\{g^{N_l}(\xi,t)\right \}_l$ of
$\left\{g^N(\xi,t)\right \}_N$  which converges uniformely on any
compact set of\ $\R\times[0,T]$ to the solution $\hat g(\xi,t)$.
\end{prop}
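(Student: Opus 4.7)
The plan is to apply an Arzelà--Ascoli argument to the family $\{g^N\}$ and then identify the limit with $\hat g$ by passing to the limit in the discretization. The bounds \eqref{mom} from the preceding Remark give me uniform control $|g^N(\xi,t)|\le 1$, $|\partial_\xi g^N(\xi,t)|\le C$ and $|\partial_\xi^2 g^N(\xi,t)|\le 1$ on $\R\times[0,T]$ (these properties are inherited from the nodal values $\hat\phi_j^N$ by the convexity of the interpolation formula). This immediately provides equicontinuity in the variable $\xi$ on every compact set.

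To get equicontinuity in the time variable I would use the discretization \eqref{eqdis} itself. Since on a fixed compact $\{|\xi|\le K\}$ the right-hand side of \eqref{eqdis} is bounded, say, by $\tfrac{K}{r}\cdot C + 2$ uniformly in $j$ and $N$, I obtain
\[
\big|\hat\phi_{j+1}^N(\xi)-\hat\phi_j^N(\xi)\big|\le C_K\,\Delta t,\qquad |\xi|\le K.
\]
The piecewise-linear interpolation then gives $|g^N(\xi,t)-g^N(\xi,s)|\le C_K|t-s|$ on $\{|\xi|\le K\}\times[0,T]$, uniformly in $N$. Combined with the $\xi$-equicontinuity, Arzelà--Ascoli produces a subsequence $g^{N_l}$ converging uniformly on every compact of $\R\times[0,T]$ to some continuous limit $G(\xi,t)$. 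The second-derivative bound additionally ensures (upon further extraction) that $\partial_\xi g^{N_l}$ converges uniformly on compacts to $\partial_\xi G$.

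It then remains to identify $G$ with $\hat g$. For this I would rewrite the recursion \eqref{eqdis} in telescoped form: for $t=K_N\,T/N$ and any $\xi$,
\[
g^N(\xi,t)-\hat g_0(\xi)=\sum_{j=0}^{K_N-1}\Delta t\Big[\tfrac{1}{r}\xi\,\partial_\xi\hat\phi_{j+1}^N(\xi)+\hat\phi_j^N(p\xi)\hat\phi_j^N(q\xi)-\hat\phi_j^N(\xi)\Big].
\]
The right-hand side is a Riemann sum for
\[
\int_0^t\Big[\tfrac{1}{r}\xi\,\partial_\xi G(\xi,s)+G(p\xi,s)G(q\xi,s)-G(\xi,s)\Big]\,\d s,
\]
and the uniform convergence of $g^{N_l}$ and $\partial_\xi g^{N_l}$ on compacts together with the uniform time-equicontinuity of the $\hat\phi_j^N$ (same argument as above) allows me to pass to the limit. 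Thus $G$ is a $\mathcal{C}^1$-in-time solution of the integrated form of \eqref{geq1} with initial datum $\hat g_0$. Since the (scaled) Cauchy problem inherits uniqueness from Theorem \ref{esist} via the invertible rescaling $\hat g(\xi,t)=\hat f(\xi/\sqrt{E(t)},t)$, the limit must coincide with $\hat g$, which proves the claim.

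The main obstacle is the treatment of the implicit drift term $\tfrac{1}{r}\xi\,\partial_\xi\hat\phi_{j+1}^N$: its passage to the limit is not automatic because the derivative is evaluated at the advanced step $j+1$. This is precisely where the uniform second-derivative bound $|\partial_\xi^2\hat\phi_j^N|\le 1$ from \eqref{mom} is essential, as it provides the equicontinuity of $\partial_\xi g^N$ needed to make the Riemann-sum argument work; the growth factor $|\xi|$ in front is harmless because we argue on compact sets in $\xi$.
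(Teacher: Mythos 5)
Your proposal follows essentially the same route as the paper's proof: the uniform bounds \eqref{mom} give equicontinuity in $\xi$, the boundedness of the discrete difference quotient coming from \eqref{eqdis} on compact sets gives Lipschitz continuity in $t$, Arzel\`a--Ascoli with a diagonal extraction produces a limit $G$ together with the locally uniform convergence of $\partial_\xi g^{N_l}$, and one then passes to the limit in the discretization and concludes by uniqueness. The only presentational difference in the identification step is that you telescope the recursion into a Riemann sum for the integrated equation, whereas the paper writes $\partial_t^- g^{N_l}$ as the right-hand side of \eqref{geq1} plus a remainder $R_{N_l}\to 0$ and identifies the limit in the sense of distributions; both variants hinge on the same ingredient you correctly isolate, namely the control of the time variation of $\partial_\xi g^N$ (the paper gets it from the bound on $\partial_t\partial_\xi g^N$ via \eqref{mom} and the discrete equation).

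One step is too quick, however: you conclude by invoking the uniqueness of Theorem \ref{esist}, but that theorem asserts uniqueness within the class of nonnegative solutions in $C^1([0,+\infty),L^1(\R))$, while your limit $G$ is a priori only a bounded continuous function solving the Fourier-side equation. (By Proposition \ref{prop-momenti} and L\'evy continuity it is at best the characteristic function of a probability \emph{measure} for each $t$; absolute continuity is not known at this stage.) So Theorem \ref{esist} does not literally apply to $G$. The paper closes this by proving uniqueness directly at the Fourier level: undoing the scaling, both $\hat f(\xi,t)$ and $F^*(\xi,t)=G\bigl(\xi\sqrt{E(t)},t\bigr)$ are bounded $C^1$ solutions of \eqref{cauchy}, their difference $h$ satisfies $|\partial_t h+h|\leq 2\|h(t)\|_\infty$, and Gronwall's lemma gives $\|h(t)\|_\infty\leq e^{t}\|h(0)\|_\infty=0$. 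With this short estimate in place of the appeal to Theorem \ref{esist}, your argument is complete.
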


%%%%%%%%%%%%%%%%%%%%%%%%%%%%%%%%%%%%%%%%%%%%%%%%%%%%%%%%%%%%%%%%%%%%%%%%%%%%%%%%%%%%%%%%%%%%%%%%%%%%%%%%%%%%%%%%%%%%%%%

\section{Propagation of regularity}\label{regularity}
 In this section we prove Theorem \ref{teo-bounds}.
Thanks to the uniform convergence of a subsequence of the
approximate solutions $g^{N}(\xi,t) $ to the solution $\hat
g(\xi,t)$ and to the definition of $g^{N}(\xi,t)$, it is enough to
prove the bounds \eqref{bbs}  for any $\hat \phi_j^N(\xi) $
uniformly for $N\in \N$ and $j=0,\dots, N$. The control of low
frequences is a direct consequence of the properties \eqref{norm1}
of the initial data and of the convergence to zero of the distance
$d_{2+\delta}(g(t),g_\infty)$ and  it is proven in the following
lemma.

\begin{lem}\label{close}
Assume $0<q\leq p$ satisfying $p^2+q^2<1$ and such that there is
$\tilde\delta \in (0,1)$ for which  $\SSS(\delta) <0$ for
$0<\delta<\tilde\delta$. Let $g(t)$ be the weak solution of the
equation (\ref{g-eq}), corresponding to the initial density $g_0$
satisfying the normalization conditions \fer{norm1}, and
$$
\int_{\R}|v|^{2+\tilde\delta}\, g_0(v)\, \d v<+\infty.
$$
Let $\hat\phi_j^N$ the approximation defined in \eqref{eqdis}. For
any $0<k<\frac 12$ there exists $\rho>0$ such that for any fixed
$T >0$ and any $N\in \N$ large enough
%(depending on $T$, $\delta \in
%(0,1)$ and $p$, $q$)
we get \be\label{est-sol} |\hat g(\xi,t)|\leq \frac
1{1+k\xi^2},\quad |\xi|\leq \rho, \quad t\geq 0, \ee
\be\label{est-succ} |\hat \phi_j^N(\xi)|\leq \frac 1{1+k\xi^2},
\quad |\xi|\leq \rho,\quad j=0,\dots, N. \ee
\end{lem}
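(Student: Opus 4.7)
The plan is to exploit the uniform control of the $(2+\delta)$-moment of the densities (provided for $g(t)$ by Theorem \ref{teo-pt} and for $\phi_j^N$ by Proposition \ref{prop-momenti}) to force the Fourier transform of any density satisfying \eqref{norm1} with finite $(2+\delta)$-moment to behave like $1-\xi^2/2$ near the origin with a remainder of order $|\xi|^{2+\delta}$, and then to compare it pointwise with the target $1/(1+k\xi^2)=1-k\xi^2+O(\xi^4)$, using that the strict inequality $k<1/2$ opens up a quadratic margin that dominates the remainder for small $|\xi|$.

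More precisely, I would fix $\delta\in(0,\tilde\delta)$ and, combining \eqref{norm1} with the elementary pointwise inequality $|e^{ix}-1-ix+x^2/2|\leq C_\delta |x|^{2+\delta}$ (routinely obtained by treating $|x|\leq 1$ with the cubic Taylor bound and $|x|\geq 1$ by brute majorization), write for any probability density $f$ satisfying \eqref{norm1} with finite $(2+\delta)$-moment
\[
\hat f(\xi)=1-\frac{\xi^2}{2}+R_f(\xi),\qquad |R_f(\xi)|\leq C_\delta\,|\xi|^{2+\delta}\int_\R |v|^{2+\delta}f(v)\,\d v.
\]
For $|\xi|\leq\sqrt 2$ one has $1-\xi^2/2\geq 0$, so the triangle inequality yields $|\hat f(\xi)|\leq 1-\xi^2/2+|R_f(\xi)|$. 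After cross-multiplication, the target bound $|\hat f(\xi)|\leq 1/(1+k\xi^2)$ is then equivalent to
\[
\Bigl(\frac12-k\Bigr)\xi^2+\frac{k}{2}\xi^4\geq |R_f(\xi)|\,(1+k\xi^2),
\]
and since $k<1/2$ while $|R_f(\xi)|=O(|\xi|^{2+\delta})$ with a constant depending only on the $(2+\delta)$-moment of $f$, the left-hand side dominates whenever $|\xi|\leq\rho$ for some $\rho>0$ depending only on $k$, $\delta$ and that moment bound.

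It then remains to apply this abstract criterion to $f=g(\cdot,t)$, with moment bound $c_\delta$ supplied by Theorem \ref{teo-pt}, to obtain \eqref{est-sol} for every $t\geq 0$; and to $f=\phi_j^N$, with moment bound supplied by Proposition \ref{prop-momenti} as soon as $N$ is large enough depending on $\delta,T,p,q$, to obtain \eqref{est-succ} for every $j=0,\dots,N$. Taking the $\rho$ associated with the larger of the two moment constants yields the single radius claimed in the lemma. The only substantive requirement here is precisely the \emph{uniformity} of the $(2+\delta)$-moment in $t$ and in $(j,N)$, which is exactly the content of the two quoted results; once this is granted, the lemma reduces to a pointwise Taylor comparison near $\xi=0$ and no dynamical or inductive argument is needed.
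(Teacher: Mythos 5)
Your argument is correct, but it follows a genuinely different route from the paper. The paper also starts from the observation that $\hat g(\xi,t)=1-\xi^2/2+o_t(\xi^2)$ and that the issue is uniformity in $t$ (respectively in $j,N$); it resolves it by comparing with the stationary state: for \eqref{est-sol} it writes $|\hat g(\xi,t)|\leq|\hat g_\infty(\xi)|+|\xi|^{2+\delta}d_{2+\delta}(g(t),g_\infty)$ and invokes the exponential decay of $d_{2+\delta}(g(t),g_\infty)$ from Theorem \ref{teo-pt}, so that the $o(\xi^2)$ comes from the single fixed function $\hat g_\infty$; for \eqref{est-succ} it first has to prove, by an induction on $j$ carried out through the integral representation of $\hat\phi_{j+1}^N$, that $d_{2+\delta}(\phi_{j+1}^N,g_\infty)\leq d_{2+\delta}(\phi_j^N,g_\infty)\leq d_{2+\delta}(g_0,g_\infty)$ for $N$ large, and then argues as before. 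You instead bypass $g_\infty$ and the metric entirely: the quantitative Taylor bound $|\hat f(\xi)-1+\xi^2/2|\leq C_\delta|\xi|^{2+\delta}\int|v|^{2+\delta}f$, valid for any density normalized as in \eqref{norm1}, reduces both \eqref{est-sol} and \eqref{est-succ} to the single uniform $(2+\delta)$-moment bound, which is exactly what Theorem \ref{teo-pt} and Proposition \ref{prop-momenti} supply (and the constant in Proposition \ref{prop-momenti} is indeed independent of $T$, as its proof via the recursion $d_{j+1}^N\leq d_j^N-\tfrac12\Delta t|\SSS(\delta)|d_j^N+2\Delta t B_{p,q}(\delta)$ shows). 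Your cross-multiplication step and the use of the margin $\tfrac12-k>0$ against the $O(|\xi|^{2+\delta})$ remainder are sound. What you gain is a shorter, non-inductive proof that treats the solution and its approximants by one abstract criterion; what the paper's route provides in exchange is the discrete-time non-expansiveness of $d_{2+\delta}(\phi_j^N,g_\infty)$, an estimate in the same spirit as the contraction arguments reused in Sections \ref{Smoothness} and \ref{sec-strong}, though nothing in Lemma \ref{close} itself requires it.
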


\begin{proof}
Let us begin by proving \eqref{est-sol}. We remark first that
since
 for all $t\geq 0$
\[
 g(v,t)\geq 0,\quad \int_\R g(v,t)\, \d v=1, \quad \int_\R v\, g(v,t)
\, \d v=0,\quad \int_\R v^2 \, g(v,t)\, \d v=1
\]
we can deduce at once
\[
%\label{stima-inutile}
\hat g(\xi,t) = 1-\frac {\xi^2} 2 + o_t(\xi^2),\quad \xi \to 0
\]
and so for  $t\geq 0$ and  $0<k<\frac 12$ there exists $\rho =
\rho (t) >0$ such that
\[
|\hat g(\xi, t)| \leq \frac 1{1+ k\xi^2},\quad |\xi| \leq \rho(t)
\]
but this is not enough because we need an estimate independent of
$t$. We have therefore to proceed differently. By Theorem
\ref{teo-pt}, for any fixed  $0<\delta<\tilde\delta$ we have
\[
d_{2+\delta} (g(t), g_\infty) = \sup_{\xi\in\R} \frac {|\hat
g(\xi, t) -\hat g_\infty(\xi)|}{|\xi|^{2+\delta}} \leq e^{-|
\SSS(\delta)|t} d_{2+\delta}  (g_0, g_\infty), \quad t\geq 0.
\]
Moreover, since $g_\infty\geq 0$, $\int_\R g_\infty(v)\, \d v=1$,
$\int_\R v\, g_\infty(v)\, \d v=0$, $\int_\R v^2\, g_\infty(v)\,
\d v=0$ we have
\[
\hat g_{\infty} (\xi)  = 1-\frac {\xi^2}2 + o(\xi^2),\quad \xi\to
0,
\]
so we get uniformly for $t\geq 0$ and $\xi\neq 0$:
\[
\begin{aligned}
|\hat g(\xi, t)| &\leq \left|\hat g_\infty(\xi)\right| + \frac {|\hat g(\xi, t) - \hat g_\infty(\xi)|}{|\xi|^{2+\delta} }|\xi|^{2+\delta}\\
&\leq 1-\frac {\xi^2}2 + o(\xi^2) + C e^{-|\SSS(\delta)| t}
|\xi|^{2+\delta}, \quad \xi\to 0.
\end{aligned}
\]
This shows that for any $0<k <\frac 12$ there exists $\rho >0$
such that for all $t\geq 0$
\[
|\hat g(\xi, t)| \leq 1 -k\xi^2, \quad |\xi| \leq \rho
\]
or, which is equivalent, for any $0<k<\frac 12$ there exists $\rho
>0$ such that for all $t\geq 0$
\[
|\hat g(\xi, t)| \leq \frac 1{1+ k \xi^2},\quad |\xi| \leq \rho.
\]
In order to prove \eqref{est-succ}, we would like to exploit again
the $d_{2+\delta}$ distance. For $N\in\N$ and $j=1, \dots, N-1$
let us estimate first $d_{2+\delta} ( \phi_{j+1}^N, \hat
g_\infty)$. We recall that \be \label{eqstaz} 0=\frac 1r\xi
\frac{\d}{\d\xi}\hat g_\infty(\xi)+\hat g_\infty(p\xi)\hat
g_\infty(q\xi)-\hat g_\infty(\xi) \ee with $\frac 1r = \frac
{1-p^2-q^2}2$. By considering the two equations  (\ref{eqdis}) and
(\ref{eqstaz}) we have
\[
\begin{aligned}
\frac{\hat \phi_{j+1}^N (\xi) -\hat g_\infty(\xi)-(\hat \phi_{j}^N
(\xi) -\hat g_\infty(\xi))}{\Delta t}&=\frac 1r\xi
\left(\frac{\d}{\d\xi}\hat \phi_{j+1}^N (\xi)-\frac{\d}{\d\xi}\hat
g_\infty(\xi)\right)\\
&+\hat \phi_j^N( p \xi) \hat \phi_j^N(q\xi) -\hat
g_\infty(p\xi)\hat g_\infty(q\xi)-\left(\hat \phi_j^N( \xi)-\hat
g_\infty(\xi)\right).
\end{aligned}
\]
In the same way as in Proposition \ref{intform} we obtain the
 integral form:
\[
\begin{aligned}
\hat \phi_{j+1}^N (\xi) -\hat g_\infty(\xi)&=\frac{r}{\Delta
t}\int_1^{+\infty}\Delta t\left(\hat \phi_{j}^N (p\tau \xi)\hat
\phi_{j}^N (q\tau \xi)-\hat g_\infty(p\tau\xi)\hat
g_\infty(q\tau\xi)\right)\\
&+(1-\Delta t)\left(\hat \phi_{ j}^N(\tau \xi)-\hat
g_\infty(\tau\xi) \right)\frac{d\tau}{\tau^{\frac{r}{\Delta
t}+1}}.\\
\end{aligned}
\]
Therefore for $\xi\neq 0$ we get:
\[
\begin{aligned}
&\frac {|\hat \phi_{j+1}^N (\xi) -\hat
g_\infty(\xi)|}{|\xi|^{2+\delta}}
\\
&\leq \frac r{\Delta t}\int_1^{+\infty} \left(\Delta t \frac {|
\hat \phi_j^N( p\tau \xi) \hat \phi_j^N(q\tau\xi) - \hat
g_\infty(p\tau\xi)
 \hat g_\infty(q\tau\xi)|} {|\xi|^{2+\delta}}
+(1-\Delta t) \frac {|\hat \phi_j^N(\tau \xi) -\hat g_\infty
(\tau\xi)|}{|\xi|^{2+\delta}}\right )  \frac {\d \tau}{\tau^{\frac
r{\Delta t}+1}}.
\end{aligned}
\]
Now for $\tau \neq 0$
\[
 \frac {| \hat \phi_j^N( p\tau \xi) \hat \phi_j^N(q\tau\xi) - \hat g_\infty(p\tau\xi)
 \hat g_\infty (q\tau\xi)|}{|\xi|^{2+\delta}} \leq
d_{2+\delta}(\phi_j^N,  g_\infty) (p^{2+\delta} + q^{2+\delta})
\tau^{2+\delta}
\]
and so
\[
\begin{aligned}
&\frac {|\hat \phi_{j+1}^N (\xi) -\hat
g_\infty(\xi)|}{|\xi|^{2+\delta}} \leq \frac r{\Delta t}
d_{2+\delta}(\phi_j^N, g_\infty ) \left(\int_1^{+\infty}
\left(\Delta t (p^{2+\delta} + q^{2+\delta}) + 1-\Delta t\right ) \tau^{2+\delta}\,   \frac {\d \tau}{\tau^{\frac r{\Delta t}+1}}\right ) \\
&\leq   d_{2+\delta}(\phi_j^N,  g_\infty)\, \frac r{\Delta t} \,
(\Delta t (p^{2+\delta} + q^{2+\delta} -1) + 1) \,
\int_1^{+\infty} \frac {\d \tau} { \tau^{\frac r{\Delta
t}-1-\delta}}.
\end{aligned}
\]
For $N > \frac {T(2+\delta)}{r}$ we get
\[
d_{2+\delta}(\phi_{j+1}^N, g_\infty) \leq d_{2+\delta}(\phi_j^N,
 g_\infty)\, \frac {\frac r{\Delta t}}{\frac r{\Delta t}
-(2+\delta)} \left( \Delta t (p^{2+\delta} + q^{2+\delta} -1) +
1\right )
\]
and remembering that
\[
S_{p,q}(\delta)=(p^{2+\delta} + q^{2+\delta} -1) + \frac
{2+\delta}2 (1-p^2-q^2)
\]
we get
\[
\begin{aligned}
& \frac {\frac r{\Delta t}}{\frac r{\Delta t} - (2+\delta)}
\left( \Delta t (p^{2+\delta} + q^{2+\delta} -1) + 1\right )\\
&= \frac{ \Delta t (p^{2+\delta} + q^{2+\delta} -1) + 1}
{1- \Delta t \,\frac{2+\delta} 2\, (1-p^2-q^2)}\\
& =\frac{ 1+ S_{p,q}(\delta)\, \Delta t + (\Delta t)^2 \frac
{2+\delta}2 (1-p^2-q^2)(p^{2+\delta} + q^{2+\delta} -1)  } {1-
(\Delta t)^2 \left(\frac{2+\delta} 2 (1-p^2-q^2)\right )^2}.
\end{aligned}
\]
Since $S_{p,q}(\delta)<0$ and
\[
\frac 1{1-(\Delta t)^2 \left(\frac {\delta +2}2 (1-p^2-q^2)\right
)^2} = 1 +  o(\Delta t), \quad \Delta t \to 0,
\]
we get, for $N\in \N$ large enough and $j=1,\dots, N-1$:
\[
d_{2+\delta}(\phi_{j+1}^N, g_\infty) \leq \left(1 - \frac{|S_
{p,q}(\delta)|}2\Delta t\right ) d_{2+\delta}(\phi_j^N, g_\infty )
\leq \, d_{2+\delta}(\phi_j^N, g_\infty).
\]
Recursively, we get
\[
d_{2+\delta}(\phi_{j+1}^N, g_\infty) \leq d_{2+\delta}(g_0,
g_\infty)
\]
where $d_{2+\delta}( g_0,g_\infty) $ depends only on the
quantities in \eqref{momenti}.  Therefore, for $j=1,\dots, N$ and
$\xi\neq 0$ we get
\[
|\hat \phi_j^N (\xi)| \leq |\hat g_\infty(\xi)| +
|\xi|^{2+\delta}d_{2+\delta} (\phi_{j}^N, g_\infty ) \leq |\hat
g_\infty(\xi)| +  |\xi|^{2+\delta}  C
\]
for $C=d_{2+\delta}( g_0,g_\infty) $ independent of $N$ and $j$.
Since
\[
\hat g_\infty(\xi) = 1- \frac {\xi^2} 2 + o(\xi^2), \quad \xi \to
0
\]
we get that for any $0<k<\frac 12$ there exists $\rho >0$ such
that for $j=0,\dots, N$
\[
|\hat \phi_j^N(\xi)| \leq \frac 1 {1+k \, \xi^2}, \quad |\xi| \leq
\rho
\]
which achieves the proof.
\end{proof}

We are now in position to prove Theorem \ref{teo-bounds}.

\renewcommand{\thetheonumbered}{\ref{teo-bounds}}
\begin{theonumbered}
 Assume $0<q\leq p$  satisfying $p^2+q^2<1$ and such that there is $\tilde\delta \in (0,1)$ for which  $\SSS(\delta) <0$, for $0<\delta<\tilde\delta$.
 Let $g(t)$ be the weak
solution of the equation (\ref{g-eq}), corresponding to the
initial density $g_0$ satisfying the normalization conditions
\fer{norm1}, and
$$
\int_{\R}|v|^{2+\tilde\delta}\, g_0(v)\, \d v<+\infty.
$$
If in addition
 \begin{equation*}\tag{\ref{con3}}
|\hat g_0(\xi)|\leq \frac 1{(1+\beta|\xi|)^\nu}, \quad  |\xi|>R,
\end{equation*}
for some $R>0$, $\nu >0$ and $\beta>0$, then there exist
$\rho>0$,\ $k >0$,\ $\beta' >0$, $\nu'>0$
% and $0<\tilde\delta\leq \delta$
 such that
$g(t)$ satisfies
% $$
% \int_{\R}|v|^{2+\tilde\delta}g(v,t) \d v\leq c_\delta <+\infty ,
% $$
% and
 \begin{equation*}
\tag{\ref{bbs}}
 |\hat g(\xi, t)| \leq
\left\{
\begin{aligned}
 &\frac 1{1+ k\xi^2},\quad |\xi| \leq \rho ,\quad t\geq 0\\
& \frac 1{(1+ \beta ' |\xi|)^{{\nu}'}},\quad |\xi|> \rho,\quad
t\geq 0.
\end{aligned}
\right .
\end{equation*}
\end{theonumbered}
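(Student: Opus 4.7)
By Proposition \ref{prop-converg} it suffices to prove both estimates in \eqref{bbs} for the discrete iterates $\hat\phi_j^N$ uniformly in $j=0,\dots,N$ and in $N$ sufficiently large, since the bounds then pass to $\hat g(\xi,t)$ through the uniformly convergent subsequence. The low-frequency inequality $|\hat\phi_j^N(\xi)|\leq 1/(1+k\xi^2)$ on $|\xi|\leq\rho$ is already furnished by Lemma \ref{close}, so the whole task reduces to establishing the polynomial decay $|\hat\phi_j^N(\xi)|\leq 1/(1+\beta'|\xi|)^{\nu'}$ for $|\xi|>\rho$, uniformly in $j$ and $N$.

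The plan is to argue by induction on $j$ starting from the integral representation
\[
\hat\phi_{j+1}^N(\xi)=\frac{r}{\Delta t}\int_1^{+\infty}\Bigl(\Delta t\,\hat\phi_j^N(p\tau\xi)\hat\phi_j^N(q\tau\xi)+(1-\Delta t)\hat\phi_j^N(\tau\xi)\Bigr)\frac{\d\tau}{\tau^{r/\Delta t+1}}
\]
provided by Proposition \ref{intform}. The base case $j=0$ follows by combining hypothesis \eqref{con3} with the uniform bound $|\hat g_0|\leq 1$ on the compact ring $\rho<|\xi|\leq R$: after possibly enlarging $\rho$ and choosing $\nu'\leq\nu$ and $\beta'\leq\beta$ small enough, the inequality $|\hat g_0(\xi)|\leq 1/(1+\beta'|\xi|)^{\nu'}$ can be made to hold for every $|\xi|>\rho$.

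For the inductive step one treats the linear and bilinear pieces separately. The linear contribution is straightforward: for $|\xi|>\rho$ and $\tau\geq 1$ one has $\tau|\xi|>\rho$, so the inductive hypothesis applies directly, and the elementary inequality $1+\beta'\tau|\xi|\geq c_0\,\tau\,(1+\beta'|\xi|)$ combined with $\int_1^{+\infty}\tau^{-\nu'-r/\Delta t-1}\,\d\tau<\infty$ produces a contribution of the desired form $C/(1+\beta'|\xi|)^{\nu'}$. The bilinear piece is the real point of difficulty: one splits the $\tau$-integration according to whether $p\tau|\xi|$ and $q\tau|\xi|$ lie above or below $\rho$ and applies on each subregion either Lemma \ref{close} combined with the trivial bound $|\hat\phi_j^N|\leq 1$, or the inductive hypothesis on both factors. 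The parameters $\nu'>0$ and $\beta'>0$ are then tuned small enough that the multiplicative constants produced by the $\tau$-integration against $\tau^{-r/\Delta t-1}$ are strictly less than one, closing the induction uniformly in $j$ and in $N$ large enough.

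The main obstacle is precisely the calibration of $(\nu',\beta')$ in the inductive step: because the mass of the weight $(r/\Delta t)\,\tau^{-r/\Delta t-1}$ concentrates near $\tau=1$ as $\Delta t\to 0$, the contractive gain for large $|\xi|$ comes essentially only from the factors $p<1$ and $q<1$ hidden inside the bilinear term. One has to pick $\nu'$ small, in the same spirit as the sign condition $\SSS(\delta)<0$ exploited in Lemma \ref{close}, so that the corresponding self-similar balance is strictly contractive. Once the uniform estimate is secured on the approximating sequence, Proposition \ref{prop-converg} delivers \eqref{bbs} for $\hat g(\xi,t)$ by passage to the limit along the convergent subsequence.
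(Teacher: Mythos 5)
Your overall architecture coincides with the paper's: reduce to the iterates $\hat\phi_j^N$ via Proposition \ref{prop-converg}, take the low frequencies from Lemma \ref{close}, and propagate the polynomial tail by induction through the integral representation, splitting the bilinear term according to whether $p\tau|\xi|$ and $q\tau|\xi|$ fall above or below $\rho$. The flaw is in the mechanism you invoke to close the induction. You write that $\nu'$ and $\beta'$ must be tuned so that ``the multiplicative constants produced by the $\tau$-integration against $\tau^{-r/\Delta t-1}$ are strictly less than one.'' That is not how the estimate closes, and for the linear term your route actually fails: the weight $\frac{r}{\Delta t}\,\tau^{-r/\Delta t-1}\,\d\tau$ on $[1,+\infty)$ is a probability measure and the coefficients $\Delta t$ and $1-\Delta t$ sum to one, so the representation is an exact convex combination and the $\tau$-integration produces constants equal to one, never less. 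Hence the induction, whose hypothesis carries the constant $1$, can only close if the integrand is bounded by $1/(1+\beta'|\xi|)^{\nu'}$ pointwise in $\tau\geq 1$. Your proposed inequality $1+\beta'\tau|\xi|\geq c_0\,\tau\,(1+\beta'|\xi|)$ forces $c_0<1$ (let $\tau\to\infty$ with $|\xi|$ near $\rho$), so it yields for the linear piece a bound $c_0^{-\nu'}/(1+\beta'|\xi|)^{\nu'}$ with $c_0^{-\nu'}>1$, which already destroys the constant $1$ no matter what the bilinear piece does. The correct and simpler observation is that $1+\beta'\tau|\xi|\geq 1+\beta'|\xi|$ for $\tau\geq 1$, so the linear term meets the target with constant exactly one and no integration loss. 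The smallness of $\nu'$ is needed elsewhere: in the mixed and low-frequency cases of the bilinear term, where the quadratic bound $1/(1+kq^2\tau^2\xi^2)$ (or its product version) must dominate $1/(1+\beta'|\xi|)^{\nu'}$ on $|\xi|>\rho$; this is a pointwise comparison of two functions of $|\xi|$, unrelated to any balance with the $\tau$-weight or to the sign condition on $\SSS(\delta)$.

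A second, smaller imprecision: on the annulus $\rho<|\xi|\leq R$ the trivial bound $|\hat g_0|\leq 1$ does not suffice for the base case; one needs the strict inequality $\sup_{\rho\leq|\xi|\leq R}|\hat g_0(\xi)|<1$ (true for an absolutely continuous probability law, by compactness and continuity of $\hat g_0$) before shrinking $\nu'$ to absorb it --- this is precisely what the paper delegates to Proposition 2.4 of \cite{DFT}, rather than to an enlargement of $\rho$, which is not available since $\rho$ is fixed by Lemma \ref{close}.
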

%%
%% dimostrazione del teorema
%%
%%

\begin{proof}
 The bound on
the low frequences $|\xi|\leq \rho$ has been established in Lemma
\ref{close}. Moreover, as a consequence of Proposition 2.4 in
\cite{DFT}, we can suppose that condition (\ref{con3}) holds for
any $|\xi| >\rho$ with a possibly smaller exponent $\nu'$. We will
prove that for any $N\in \N$ and $j=0,\dots, N$ we get
\[
|\hat \varphi^N_j(\xi)| \leq \frac 1{(1+ \beta'|\xi|)^{\nu'}},
\quad  |\xi|>\rho
\]
for  positive $\nu'$ and $\beta'$ small enough. By induction we
have only to check the bound on
\begin{equation*}
\hat \varphi^N_1(\xi)=\frac {r}{\Delta
t}\int_1^{+\infty}\left[\Delta t\ \hat g_0(p\tau\xi)\hat
g_0(q\tau\xi)+(1-\Delta t)\ \hat g_0(\tau\xi)\right]\frac
1{\tau^{\frac r{\Delta t}+1}}\ \d \tau
\end{equation*}
Let $|\xi|> \rho$ and $\tau>1$. We are faced with three different
situations.
\begin{enumerate}[{\rm Case} I:]
\item If $|q\tau\xi|>\rho$, then
\begin{equation*}
\begin{aligned}
|\hat g_0(p\tau\xi)\hat g_0(q\tau\xi)|&\leq
\frac{1}{(1+{\beta}p\tau |\xi|)^{\nu'}}
\frac{1}{(1+{\beta}q\tau|\xi|)^{\nu'}}\\
&\leq\frac{1}{(1+{\beta}(p+q) |\xi|)^{\nu'}}\\
&\leq\frac{1}{(1+{\beta}' |\xi|)^{\nu'}}\\ \ \ \ \
\end{aligned}
\end{equation*}
for $\beta'\leq \beta(p+q)$.

\item  If $|p\tau\xi|> \rho$ and $|q\tau\xi|\leq \rho$, then
\begin{equation*}
\begin{aligned}
|\hat g_0(p\tau\xi)\hat  g_0(q\tau\xi)|&\leq
\frac{1}{(1+{\beta}p\tau |\xi|)^{\nu'}}
\frac{1}{{(1+kq^2\tau^2|\xi|^2)}}\\
&\leq\frac{1}{(1+{\beta}p
 |\xi|)^{\nu'}}
\frac{1}{(1+kq^2|\xi|^2)}.\ \ \ \\
\end{aligned}
\end{equation*}
By choosing $\nu'>0$ small enough we can show that
\begin{equation*}
\frac{1}{(1+{\beta}p
 |\xi|)^{\nu'}}
\frac{1}{(1+kq^2|\xi|^2)}\leq \frac{1}{(1+{\beta}
 |\xi|)^{\nu'}}.
\end{equation*}
Indeed, since  $\frac{1+\beta x}{1+\beta px}\leq \frac 1p$ for any
$x\geq 0$ we obtain
\begin{equation*}
\begin{aligned}
\left(\frac{1+\beta |\xi|}{1+\beta p|\xi|}\right)^{\nu'}
\frac{1}{(1+kq^2|\xi|^2)}&\leq \left(\frac
1p\right)^{\nu'}\frac{1}{(1+k
 q^2|\xi|^2)}\\ &\leq \left(\frac 1p\right)^{\nu'}\frac{1}{(1+k
 q^2\rho^2)}.
 \end{aligned}
\end{equation*}
Finally the last term is smaller than 1 for  $\nu'\leq \log_{\frac
1p}(1+kq^2\rho^2)$.

\item  If $|p\tau\xi|\leq \rho$, then
\begin{equation*}
\begin{aligned}
|\hat  g_0(p\tau\xi)\hat g_0(q\tau\xi)|&\leq
\frac{1}{(1+kp^2\tau^2|\xi|^2)}\frac{1}{(1+kq^2\tau^2|\xi|^2)}\\
&\leq\frac{1}{(1+kp^2|\xi|^2)}\frac{1}{(1+kq^2|\xi|^2)}\ \ \ \\
\end{aligned}
\end{equation*}
and we have to show that
$$
\frac{1}{(1+kp^2|\xi|^2)(1+kq^2|\xi|^2)}\leq
\frac{1}{(1+\beta|\xi|)^{\nu'}}.
$$
Since
$$
\frac{1}{(1+kp^2|\xi|^2)(1+kq^2|\xi|^2)}\leq
\frac{1}{(1+k(p^2+q^2)|\xi|^2)}
$$
and $p^2+q^2 =C >0$, in order to establish  the desired estimate
we have only to prove that for $\nu'$ small enough we have
$$
\frac{1}{1+kC|\xi|^2}\leq\frac{1}{(1+\beta|\xi|)^{\nu'}}
$$
or
\[
%\label{3}
\nu'\log(1+\beta|\xi|)\leq \log(1+kC|\xi|^2)
\]
for any $|\xi|> \rho$. This is true since the function
$$
F(\xi)=\frac{\log(1+kC|\xi|^2)}{\log(1+\beta|\xi|)}
$$
 is bounded below by a positive constant on  $|\xi|> \rho$
and so by choosing $\nu'>0$ smaller than this constant  we get the
desired estimate.
\end{enumerate}

 \end{proof}
\begin{rem}\label{remark}
 The bounds in \eqref{bbs} are also equivalent to
\[
 |\hat g(\xi, t)| \leq
\frac C{(1+ \kappa |\xi|)^{\mu}},\quad \xi\in \R, \quad t\geq 0
\]
for some positive constants $\kappa$, $\mu$ and $C$ suitably
chosen.
\end{rem}
\begin{rem}
 It wouldn't have been  possible to prove also the behavior of the solution $g(t)$ on the low frequencies by induction and this is why we had to
exploit the $d_{2+\delta}$ convergence and the behavior of the
steady state as in Lemma \ref{close}.
\end{rem}

%%%%%%%%%%%%%%%%%%%%%%%%%%%%%%%%%%%%%%%%%%%%%%%%%%%%%%%%%%%%%%%%%%%%%%%%%%%%%%%
\section{Smoothness of the steady state}\label{Smoothness}
 In this section we prove the following Theorem \ref{thm.smoothness}.

\renewcommand{\thetheonumbered}{\ref{thm.smoothness}}
\begin{theonumbered}
 Assume $0<q\leq p$ satisfying $p^2+q^2 <1$ and such that there is $\tilde\delta \in (0,1)$ for which  $\SSS(\delta) <0$
 for $0<\delta <\tilde \delta$, so that a non-trivial steady state $g_\infty$ to the Boltzmann equation \eqref{g-eq} exists.
Let us denote $\lambda \in (0,2)$ the exponent such that
$p^\lambda +q^\lambda =1$.
  Then $g_\infty$ is a smooth function and belongs to the $\lambda$-th Gevrey class $G^\lambda(\R)$, i.e.
  \begin{align*}
    \big| \hat g_\infty(\xi)\big| &\leq \exp\big(-\mu|\xi|^\lambda\big), \quad |\xi|>\rho
  \end{align*}
with suitable positive numbers $\rho$ and $\mu$.
\end{theonumbered}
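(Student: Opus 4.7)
The plan is to derive an integral representation of $\hat g_\infty$ analogous to \fer{sol-d} and bootstrap, via the identity $p^\lambda+q^\lambda=1$, from a polynomial tail to the Gevrey bound; the smoothness of $g_\infty$ in physical space will then follow from the integrability of $|\xi|^n\hat g_\infty(\xi)$ for every $n$. As a steady state of \fer{g-eq}, $\hat g_\infty$ satisfies
\[
\frac{1}{r}\,\xi\,\hat g_\infty'(\xi) \;=\; \hat g_\infty(\xi) - \hat g_\infty(p\xi)\,\hat g_\infty(q\xi).
\]
Multiplying by the integrating factor $-r\,\mathrm{sgn}(\xi)|\xi|^{-r-1}$ and integrating to $\pm\infty$ exactly as in Proposition \ref{intform}, I obtain
\[
\hat g_\infty(\xi) \;=\; r\int_1^{+\infty} \hat g_\infty(p\tau\xi)\,\hat g_\infty(q\tau\xi)\,\frac{d\tau}{\tau^{r+1}}.
\]
A polynomial tail bound $|\hat g_\infty(\xi)|\leq(1+\beta'|\xi|)^{-\nu'}$ on $|\xi|>\rho_0$ is obtained by applying Theorem \ref{teo-bounds} to any sufficiently regular initial datum (e.g.\ a Gaussian) and passing to the $t\to+\infty$ limit using the $d_{2+\delta}$ convergence of Theorem \ref{teo-pt}.

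Next I would upgrade this to polynomial decay of arbitrary order by iterating the integral representation: the estimate $|\hat g_\infty(\eta)|\leq C_n|\eta|^{-n}$ on a tail, plugged into the integral equation, yields
\[
|\hat g_\infty(\xi)| \;\leq\; \frac{C_n^{2}\,r}{(pq)^{n}\,|\xi|^{2n}}\int_1^{+\infty}\frac{d\tau}{\tau^{r+2n+1}},
\]
so that the exponent of decay doubles at each step. This is used purely as a technical input for the Gevrey upgrade.

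The heart of the argument exploits the unique $\lambda\in(0,2)$ with $p^\lambda+q^\lambda=1$, whose existence follows from $p^2+q^2<1$ and $p^0+q^0=2$. This identity makes the ansatz $|\hat g_\infty(\xi)|\leq M e^{-\mu|\xi|^\lambda}$ essentially invariant under the integral iteration: feeding it into the right-hand side and using $p^\lambda+q^\lambda=1$ yields
\[
M^{2}\,e^{-\mu|\xi|^\lambda}\cdot r\int_1^{+\infty} e^{-\mu|\xi|^\lambda(\tau^\lambda-1)}\,\frac{d\tau}{\tau^{r+1}},
\]
and a Laplace-type estimate shows the $\tau$-integral is of order $|\xi|^{-\lambda}$ as $|\xi|\to+\infty$, hence strictly less than $1/M$ for $|\xi|>\rho$ with $\rho$ sufficiently large. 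I would close the argument via a monotone bootstrap on the weighted supremum $\sup_{|\xi|>\rho}e^{\mu|\xi|^\lambda}|\hat g_\infty(\xi)|$: the arbitrary polynomial decay guarantees finiteness on any compact annulus, while the contractivity of the iteration propagates the Gevrey bound to the whole tail.

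The main obstacle is the range $\tau>1/p$ in the integral, where $p\tau|\xi|$ exceeds $|\xi|$ so the weighted ansatz is being evaluated at arguments larger than the one we are trying to control. The super-exponential factor $e^{-\mu|\xi|^\lambda(\tau^\lambda-1)}$ must absorb any possible growth of the weighted quantity at those points, which forces $\mu$ small and $\rho$ large so that the Laplace contribution from $\tau$ near $1$ dominates. The algebraic identity $p^\lambda+q^\lambda=1$ is precisely what makes the exponential structure compatible with the iteration and lets this quantitative balancing go through.
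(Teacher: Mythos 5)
Your integral representation of $\hat g_\infty$ and the role you assign to the identity $p^\lambda+q^\lambda=1$ are exactly the paper's, and your ``balance for $\tau$ near $1$'' is essentially its Case I computation. But the closing step has a genuine gap: a ``monotone bootstrap'' on $A:=\sup_{|\xi|>\rho}e^{\mu|\xi|^\lambda}|\hat g_\infty(\xi)|$ requires knowing $A<\infty$ a priori, and decay faster than every polynomial does not provide this (the function $e^{-(\log|\xi|)^2}=|\xi|^{-\log|\xi|}$ beats every polynomial yet satisfies no stretched-exponential bound). Since the integral formula evaluates $\hat g_\infty(p\tau\xi)$ and $\hat g_\infty(q\tau\xi)$ for all $\tau\in(1,\infty)$, i.e.\ at arbitrarily large arguments, the weighted quantity you want to bound reappears on the right-hand side at points where you have no control; an inequality of the form $A\le\theta A+B$ cannot be closed without first knowing $A<\infty$, and truncating to $\rho<|\xi|<R$ does not help because the right-hand side still involves arguments beyond $R$. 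Your own ``main obstacle'' paragraph identifies the difficulty but does not resolve it: to ``absorb any possible growth'' of the weighted quantity at large arguments you would need precisely the bound being proved.

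The paper avoids this circularity by replacing the self-consistent bootstrap with an honest iteration: it shows that the map $R(\psi)(\xi)=r\int_1^\infty\psi(p\tau\xi)\psi(q\tau\xi)\,\tau^{-r-1}\,\d\tau$, frozen to equal $\hat g_\infty$ on $|\xi|\le\rho$, is a $d_{2+\delta}$-contraction (this is where $\SSS(\delta)<0$ enters), starts from $\psi_0\equiv 0$ on the tail --- which satisfies the Gevrey bound vacuously --- propagates $|\psi_n(\xi)|\le e^{-\mu|\xi|^\lambda}$ by induction, and transfers the uniform bound to the fixed point $\hat g_\infty$ through the pointwise convergence $\psi_n\to\hat g_\infty$. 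A second omission in your plan: for $\rho<|\xi|<\rho/q$ and $\tau$ close to $1$ the argument $q\tau\xi$ (or even $p\tau\xi$) falls into the low-frequency region $|\cdot|\le\rho$ where the Gevrey ansatz is unavailable; the paper handles these cases with the bound $|\hat g_\infty(\eta)|\le e^{-k\eta^2}$ for $|\eta|\le\rho$ obtained as in Lemma \ref{close}, and it is exactly this case analysis that produces the admissible range $\mu\le k(p^2+q^2)$ and $\mu\le kq^{2-\lambda}$. Your detour through a polynomial tail of $\hat g_\infty$ via Theorem \ref{teo-bounds} and the decay-doubling iteration are not needed in the paper's argument and, as explained, do not supply the missing a priori finiteness.
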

\begin{proof}
We are going to prove that $g_\infty \in G^\lambda(\R)$, following
a scheme already used in \cite{MT}.

As in the proof of Lemma \ref{close}, we know the behavior of
$\hat g_\infty$ close to zero. In particular, by the same
reasoning, there exist $k\in (0,\frac12)$ and $\rho >0$ such that
\[
| \hat g_\infty (\xi)| \leq e^{-k\xi^2},\quad |\xi| \leq \rho.
\]
As a consequence of Proposition 2.4 in \cite{DFT} and without any
loss in generality, we can also suppose $\rho>1$. Let us remember
that $\hat g_\infty$ satisfies the equation
\[
 \frac 12(p^2+q^2-1) \,\xi\, \frac{d\hat g_\infty(\xi)}{d\xi} = \hat g_\infty(p\xi) \hat g_\infty(q\xi) -\hat g_\infty (\xi)
\]
and so, denoting again ${r =\frac 2{1-p^2-q^2}}$ and following
\cite{BC03}, we get
\[
\hat g_\infty(\xi) = r \int_{1}^\infty \frac {\hat g_\infty(p\tau
\xi) \hat g_\infty(q\tau \xi)}{\tau^{r+1}}\, \d\, \tau, \quad
\xi\in\R.
\]
Let us consider the space
\[
 X_\rho= \left\{ \psi \in L^\infty(\R), \ |\psi(\xi)|\leq 1, \ \psi(\xi) = \hat g_\infty (\xi) \text{\ for\ } |\xi|\leq \rho\right \}
\]
endowed with the metric $d_{2+\delta}$ defined in \eqref{ds},
where $\delta$ satisfies the assumptions of Theorem
\ref{thm.smoothness}. The space $X_\rho$ is a Fréchet space. Let
us consider then the function $R$ defined by \be
 R(\psi)(\xi)=
\begin{cases}
 \hat g_\infty(\xi) & |\xi| \leq \rho\\
r \int_{1}^\infty \frac {\psi(p\tau \xi) \psi(q\tau
\xi)}{\tau^{r+1}}\, \d\, \tau, & |\xi| >\rho.
\end{cases}
\ee We are going to prove that $R$ is a contraction on $X_\rho$
and so $\hat g_\infty$ is its unique fixed point. It is
straightforward that $R :X_\rho \longrightarrow X_\rho$. As for
the contractiveness, for $\psi$ and $\phi \in X_\rho$ and $|\xi|
>\rho$ we have
\[
\begin{aligned}
\frac { |R\left(\psi\right )(\xi) -R\left(\phi\right
)(\xi)|}{|\xi|^{2+\delta}}& \leq \left(r \int_{1}^\infty \frac
{\tau^{2+\delta} (p^{2+\delta} +q^{2+\delta})}{\tau^{r+1}}\, \d \,
\tau\right ) d_{2+\delta}(\psi, \phi)
\\
& \leq \left(r\, (p^{2+\delta} +q^{2+\delta})\int_{1}^\infty \frac
{\d \, \tau }{\tau^{r-1-\delta}}\, \right ) d_{2+\delta}(\psi,
\phi).
\end{aligned}
\]
We remark that the assumption
\[
 S_{p,q}(\delta) = p^{2+\delta} +q^{2+\delta} -1 + \frac {2+\delta}2(1-p^2-q^2) <0
\]
implies
\[
 \frac {2+\delta}2(1-p^2-q^2) <1 -(p^{2+\delta} +q^{2+\delta}) <1
\]
and so,
\[
  \frac {\delta}2(1-p^2-q^2) < p^2+q^2
\]
which is precisely
\[
 r-1-\delta >1.
\]
So we get
\[
 d_{2+\delta}(R(\psi), R(\phi)) \leq \frac{r\, (p^{2+\delta} +q^{2+\delta})}{r -2-\delta} d_{2+\delta}(\psi, \phi).
\]
Remembering  that $r = \frac 2{1-p^2-q^2}$, we get
\[
 \frac{r\, (p^{2+\delta} +q^{2+\delta})}{r -2-\delta} = \frac 2{1-p^2-q^2} (p^{2+\delta} +q^{2+\delta})\,
\frac 1{ \frac 2{1-p^2-q^2}-2-\delta} <1 \iff S_{p,q}(\delta) <0
\]
and this allows to conclude. Choosing $\psi_0\in X_\rho$ for
example as
\[
 \psi_0(\xi) = \begin{cases}
                 \hat g_\infty (\xi) & |\xi| \leq \rho\\
                     0 & |\xi| >\rho
                \end{cases}
\]
and defining by induction for $n\geq 0$
\[
\psi_{n+1} (\xi) =
\begin{cases}
 \hat g_\infty (\xi) & |\xi| \leq \rho\\
r \int_{\tau=1}^\infty \frac {\psi_n(p\tau \xi) \psi_n(q\tau
\xi)}{\tau^{r+1}}\, \d\, \tau, & |\xi| >\rho,
\end{cases}
\]
we get authomatically  $d_{2+\delta}(\psi_n, \hat g_\infty) \to 0$
for $n\to +\infty$, which implies convergence pointwise. We show
now that there exists $\mu >0$ such that for all $n\in\N$
\[
 |\psi_n(\xi)| \leq e^{-\mu|\xi|^\lambda}, \quad |\xi| >\rho
\]
 and this uniform estimate passes therefore to the limit and allows to conclude.
The only thing to control is that for $\tau >1$ and $|\xi|>\rho$
we have
\[
 |\psi_0(p\tau \xi) \psi_0(q\tau\xi)| \leq e^{-\mu |\xi|^\lambda}.
\]
We distinguish three cases:
\begin{enumerate}[{\rm Case} I:]
  \item If  $q\tau|\xi|>\rho$, since $p^\lambda +q^\lambda =1$ and  $\rho>1$ we get
    \begin{align*}
      e^{\mu|\xi|^\lambda}\cdot|\psi_0(p\tau\xi)|\cdot|\psi_0(q\tau\xi)| & \leq e^{\mu|\xi|^\lambda(1- \tau^\lambda (p^\lambda +q^\lambda))} \leq
e^{\mu|\xi|^\lambda(1-\tau^\lambda)}\leq 1.
    \end{align*}
  \item If $p\tau|\xi|\leq \rho$, then $|\xi| >1$ implies  $\xi^2 \geq |\xi|^\lambda$. Denoting  $p^2+q^2 =C$,  we conclude
    \begin{align*}
      e^{\mu|\xi|^\lambda}\cdot|\psi_0(p\tau\xi)|\cdot|\psi_0(q\tau\xi)| &
    \leq e^{\mu|\xi|^\lambda-k \tau^2 \xi^2 (p^2+ q^2)}
    \leq  e^{\mu|\xi|^\lambda-k  |\xi|^\lambda (p^2+ q^2)}
\leq e^{|\xi|^\lambda(\mu-k\, C )} \leq 1,
    \end{align*}
    provided that $\mu\leq k\, C$.
  \item Now assume that $q\tau|\xi|\leq\rho$ while $p\tau|\xi|>\rho$.
    Using the condition $p^\lambda +q^\lambda =1$ once again, one finds
    \begin{align*}
      e^{\mu|\xi|^\lambda}\cdot|\psi_0(q\tau\xi)|\cdot|\psi_0(p\tau\xi)| &
      \leq e^{\mu|\xi|^\lambda-k\, \tau^2 q^2\xi^2-\mu \tau^\lambda p^\lambda|\xi|^\lambda} \\
&\leq e^{|\xi|^\lambda( \mu(1-p^\lambda)- k q^2)} \leq
e^{|\xi|^\lambda( \mu q^\lambda- k q^2)} \leq 1
\end{align*}
provided that $\mu\leq \frac {k q^2}{q^\lambda}$.
    \end{enumerate}

\end{proof}

\begin{rem}
By the proof of Theorem \ref{thm.smoothness}, the condition
$g_\infty \in G^\lambda (\R)$ seems to be sharp, since in the
estimate of Case I, it wouldn't have been possible to replace
$\lambda$ by $\sigma >\lambda$. Moreover, in the case $p+q=1$ the
explicit stationary state is  $\hat g_\infty(\xi)= (1+|\xi|)
e^{-|\xi|}$ and this proves sharpness at least in this special
case.
% In the assumptions of Theorem \ref{thm.smoothness} we have supposed $p^2+q^2 <1$. Nevertheless,  Pareschi and Toscani in \cite{PT06} proved that there exists $\tilde \delta \in (0,1)$ for which $S_{p,q}(\delta) <0$ for $0<\delta <\tilde \delta$  even for some cases where $p^2+q^2 >1$ and for these values of the mixing parameters a unique stationary state exists.
% One may wonder if this stationary state belongs too to a Gevrey class. Even if $p^\lambda + q^\lambda =1$ for $\lambda >2$, it is possibile to prove only $g_\infty \in G^2(\R)$ and this is due to the behavior of $g_\infty$ near the origin, since $g_\infty \sim e^{-k\xi^2}, \quad \xi \to 0$ due to the normalization assumptions \eqref{norm1}. {\bf We conjecture that this estimate is sharp ??}
It is worth noticing that Bobylev and Cercignani in \cite{BC03}
(Theorem 5.3) proved that for $p+q>1$ and $p^2+q^2 <1$ the
stationary state $g_\infty$ satisfies the bounds
\[
 e^{-\frac{\xi^2}2} \leq |\hat g_\infty(\xi)| \leq (1+|\xi|) e^{-|\xi|}.
\]
For the values of $p$ and $q$ for which in addition there is
$\tilde \delta \in (0,1)$ such that $S_{p,q}(\delta) <0$ for
$0<\delta<\tilde\delta$, our result improves the upper bound and
gives also a new result for some $p$ and $q$ in the range $p+q<1$.
\end{rem}
By a careful reading of both proofs of Theorem \ref{teo-bounds}
and Theorem \ref{thm.smoothness}, we can deduce that not only
polynomial tails of the Fourier transform of the initial data
$g_0$ are uniformly propagated by the solution $g(t)$, but also
exponential ones, as long as the exponent does not exceed the
exponent $\lambda$ characterizing the mixing parameters $p$ and
$q$. More precisely, the result is as follows.

\renewcommand{\thetheonumbered}{\ref{gev-prop}}
\begin{theonumbered}
 Assume $0<q\leq p$ satisfying $p^2+q^2 <1$ and such that there is $\tilde \delta \in (0,1)$ for which  $\SSS(\delta) <0$ for $0<\delta <\tilde \delta$.
Let us denote $\lambda \in (0,2)$ the exponent such that
$p^\lambda +q^\lambda =1$. Let $g(t)$ be the weak solution of the
equation (\ref{g-eq}), corresponding to the initial density $g_0$
satisfying the normalization conditions \fer{norm1}, and
$$
\int_{\R}|v|^{2+\tilde\delta}\, g_0(v)\, \d v<+\infty.
$$
If in addition
 \begin{equation*}
|\hat g_0(\xi)|\leq e^{-\beta |\xi|^\nu}, \quad  |\xi|>R,
\end{equation*}
for some $R>0$, $\nu >0$ and $\beta>0$, then there exist $\rho>0$
and $\kappa >0$,
% and $0<\tilde\delta\leq \delta$
 such that
$g(t)$ satisfies
% $$
% \int_{\R}|v|^{2+\tilde\delta}g(v,t) \d v\leq c_\delta <+\infty ,
% $$
% and
 \begin{equation*}
 |\hat g(\xi, t)| \leq
\left\{
\begin{aligned}
& e^{-\kappa \xi^2},\quad |\xi| \leq \rho,\quad t\geq 0\\
& e^{-\kappa |\xi|^{\min (\nu, \lambda)}} ,\quad |\xi|> \rho,\quad
t\geq 0.
\end{aligned}
\right .
\end{equation*}
\end{theonumbered}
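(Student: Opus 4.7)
The plan is to combine the strategies of Theorems~\ref{teo-bounds} and~\ref{thm.smoothness}: establish the desired two-piece estimate for each element of the approximate sequence $\hat\phi_j^N$ by induction on $j$, uniformly in $N$, and then pass to the limit along the subsequence provided by Proposition~\ref{prop-converg}. Set $\sigma=\min(\nu,\lambda)$. Without loss of generality I enlarge the $\rho$ given by Lemma~\ref{close} so that $\rho>\max(1,R)$.

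The low-frequency bound $|\hat\phi_j^N(\xi)|\leq e^{-\kappa\xi^2}$ for $|\xi|\leq\rho$, uniformly in $j$ and $N$, is essentially already contained in Lemma~\ref{close}: its proof actually produces the sharper assertion $|\hat\phi_j^N(\xi)|\leq 1-k\xi^2$ on the same range, and the elementary inequality $1-x\leq e^{-x}$ for $x\geq 0$ converts it into the exponential form as soon as $\kappa\leq k$. The base case of the high-frequency induction, $|\hat g_0(\xi)|\leq e^{-\kappa|\xi|^\sigma}$ for $|\xi|>\rho$, follows from the hypothesis $|\hat g_0(\xi)|\leq e^{-\beta|\xi|^\nu}$: when $\sigma=\nu$ it reduces to $\kappa\leq\beta$, while when $\sigma=\lambda<\nu$ it reduces to $\rho^{\nu-\lambda}\geq\kappa/\beta$, which is absorbed into the choice of $\rho$.

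For the inductive step I would plug the hypothesis into the convex-combination integral formula of Proposition~\ref{intform}. The single-factor integrand $\hat\phi_j^N(\tau\xi)$ satisfies $|\tau\xi|>\rho$ since $\tau\geq 1$, so the inductive hypothesis immediately yields $|\hat\phi_j^N(\tau\xi)|\leq e^{-\kappa\tau^\sigma|\xi|^\sigma}\leq e^{-\kappa|\xi|^\sigma}$. For the product $|\hat\phi_j^N(p\tau\xi)\hat\phi_j^N(q\tau\xi)|$ I would follow the three-case split of the proof of Theorem~\ref{thm.smoothness}, using the low-frequency bound $e^{-k(\cdot)^2}$ whenever an argument lies in $[-\rho,\rho]$ and the inductive bound $e^{-\kappa|\cdot|^\sigma}$ otherwise:
\begin{itemize}
\item Case I ($q\tau|\xi|>\rho$): the key input is $p^\sigma+q^\sigma\geq p^\lambda+q^\lambda=1$, which holds precisely because $\sigma\leq\lambda$ and $p,q<1$; combined with $\tau\geq 1$ this gives the product bound $e^{-\kappa|\xi|^\sigma}$.
\item Case II ($p\tau|\xi|\leq\rho$): the bound $e^{-k(p^2+q^2)\tau^2\xi^2}$ is dominated by $e^{-\kappa|\xi|^\sigma}$ using $\tau^2\xi^2\geq|\xi|^\sigma$ on the range $\tau\geq 1$, $|\xi|>1$, $\sigma\leq 2$, provided $\kappa\leq k(p^2+q^2)$.
\item Case III ($p\tau|\xi|>\rho$ and $q\tau|\xi|\leq\rho$): the mixed bound $e^{-\kappa p^\sigma\tau^\sigma|\xi|^\sigma-kq^2\tau^2\xi^2}$ reduces, via the same comparisons of exponents, to the pointwise constraint $\kappa(1-p^\sigma)\leq kq^2$, whose right-hand side is strictly positive since $p<1$.
\end{itemize}
With each integrand pointwise bounded by $e^{-\kappa|\xi|^\sigma}$ independently of $\tau$, the convex-combination structure of the formula collapses the integral to $e^{-\kappa|\xi|^\sigma}$ and closes the induction.

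Finally, $\kappa$ is fixed as the minimum of $k$, $\beta$, $k(p^2+q^2)$ and $kq^2/(1-p^\sigma)$, all strictly positive constants depending only on $p,q,\beta,\nu$ and the constant $k$ of Lemma~\ref{close}; the uniform estimates then transfer to $\hat g(\xi,t)$ via Proposition~\ref{prop-converg}. I expect the only genuinely delicate point to be the bookkeeping in Case~III, where one must compare $\tau^\sigma|\xi|^\sigma$ with $\tau^2\xi^2$ on the range $\tau\geq 1$, $|\xi|>\rho>1$, and verify that the resulting constraint on $\kappa$ stays compatible with those coming from Cases~I and~II; once the condition $\sigma\leq\lambda$ is used to guarantee $1-p^\sigma>0$, the rest is a direct adaptation of the two preceding proofs.
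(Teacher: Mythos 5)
Your proposal is correct and follows essentially the same route as the paper: the authors prove Theorem \ref{gev-prop} by rerunning the induction of Theorem \ref{teo-bounds} on the approximants $\hat\phi_j^N$ (low frequencies from Lemma \ref{close}, limit via Proposition \ref{prop-converg}) while borrowing the three-case high-frequency estimates from the Gevrey proof of Theorem \ref{thm.smoothness}, which is exactly your scheme. Your constraints on $\kappa$ (namely $\kappa\leq k$, $\kappa\leq\beta$, $\kappa\leq k(p^2+q^2)$ and $\kappa(1-p^\sigma)\leq kq^2$, the last positive since $p<1$) are the correct analogues of the conditions on $\mu$ in the paper's Cases I--III, with $\sigma=\min(\nu,\lambda)\leq\lambda$ guaranteeing $p^\sigma+q^\sigma\geq 1$ in Case I.
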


\begin{proof}
The proof follows the same lines as the proof of Theorem
\ref{teo-bounds}, replacing the polynomial decreasing by the
exponential one. The key argument for the low frequencies is that
the normalization assumptions \eqref{norm1} on the steady state
imply
\[
\hat g_{\infty} (\xi)  = 1-\frac {\xi^2}2 + o(\xi^2),\quad \xi\to
0,
\]
which means
\[
|\hat g_\infty(\xi)| \leq \frac 1{1+ k \xi^2},\quad |\xi| \leq
\rho
\]
or equivalently
\[
 |\hat g_\infty(\xi)| \leq e^{- k \xi^2},\quad |\xi| \leq \rho
\]
for any $k\in (0,\frac 12)$ and $\rho$ depending on $k$. The whole
proof follows then without any particular difficulty, exploiting
for the high frequencies the estimates performed in the proof of
the Gevrey regularity  of the steady state.
\end{proof}

%%%%%%%%%%%%%%%%%%%%%%%%%%%%%%%%%%%%%%%%%%%%%%%%%%%%%%%%%%%%%%%%%%%%%%%%%%%%
\section{Strong convergence}\label{sec-strong}

In this section, we are going to prove Theorem \ref{teo-L1} on the
strong $L^1$ convergence of the scaled solution $g(t)$ to the
stationary state $g_\infty$.

\renewcommand{\thetheonumbered}{\ref{teo-L1}}
\begin{theonumbered}
Assume $0<q\leq p$ satisfying $p^2+q^2 <1$ and such that there
exists  $\tilde \delta \in (0,1)$ for which $\SSS (\delta) <0$ for
$0< \delta < \tilde \delta$
 and let  $g_\infty$ be the unique stationary solution of
\eqref{g-eq}.
 Let the initial
density $g_0$ satisfy  the normalization conditions \fer{norm1},
and
$$
\int_{\R}|v|^{2+\tilde\delta}\, g_0(v)\, \d v<+\infty.
$$
If in addition $g_0 \in {H^\eta(\R)}$ for some $\eta > 0$,
$\sqrt{g_0} \in {\dot H^{\nu}(\R)}$ for some $\nu > 0$, then the
solution $g(t)$ of (\ref{g-eq}) converges strongly in $L^1$ with
an exponential rate towards the stationary solution $g_\infty$,
i.e., there exist positive constants $C$ and $\gamma$ explicitly
computable such that
 \[
 \Vert g(t) - g_\infty \Vert_{L^1(\R)} \le C e^{-\gamma t},\quad t\geq 0.
 \]
\end{theonumbered}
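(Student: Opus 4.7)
The proof combines four ingredients, all but one of which are direct consequences of what has already been built up in the paper: (i) converting the hypothesis $\sqrt{g_0}\in\dot H^\nu$ into a polynomial tail of $\hat g_0$; (ii) propagating that tail uniformly in time via Theorem \ref{teo-bounds} and combining it with the $d_{2+\delta}$-decay of Theorem \ref{teo-pt} to obtain $L^\infty_\xi$ Fourier convergence; (iii) bootstrapping Sobolev regularity to arbitrary order via the Carlen--Gabetta--Toscani scheme; (iv) interpolating in Fourier frequency to pass from Fourier decay and Sobolev bounds to $L^2$ convergence, then to $L^1$ via moments.

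For (i), a standard inequality of the Lions--Toscani type (see \cite{LT}) turns $\sqrt{g_0}\in\dot H^{\nu}$ into a pointwise bound $|\hat g_0(\xi)|\leq (1+\beta|\xi|)^{-\nu'}$ for $|\xi|>R$, so condition \eqref{con3} is satisfied. Theorem \ref{teo-bounds} then provides the uniform-in-time polynomial decay \eqref{bbs} for $\hat g(\xi,t)$. As recalled in the introduction (see the display around \eqref{tail}), combining this uniform tail with the exponential contraction $d_{2+\delta}(g(t),g_\infty)\leq C e^{-|\SSS(\delta)|t}$ and optimizing the cut-off parameter $R$ in the elementary inequality
\[
|\hat g(\xi,t)-\hat g_\infty(\xi)|\leq d_{2+\delta}(g(t),g_\infty)\,R^{2+\delta}+\frac{2K}{(\kappa R)^\mu}
\]
yields
\[
\sup_{\xi\in\R}|\hat g(\xi,t)-\hat g_\infty(\xi)|\leq C\,e^{-\gamma_1 t},\qquad t\geq 0,
\]
for some explicit $\gamma_1>0$.

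The main obstacle is step (iii). The hypothesis $g_0\in H^\eta$ provides only a tiny degree of Sobolev regularity, and the plan is to turn this into \emph{arbitrarily high} uniform-in-time Sobolev regularity via the bootstrap of \cite{CGT}. The structural prerequisites for this scheme are already in place: the uniform Fourier tail produced in step (ii) and the smoothness of $g_\infty$ furnished by Theorem \ref{thm.smoothness}. The delicate point is that \cite{CGT} was written for the elastic Boltzmann equation, whereas \eqref{g-eq} contains the drift term $\tfrac{1}{2}(p^2+q^2-1)\xi\partial_\xi\hat g$; however, the semi-discrete formulation \eqref{sol-d} is tailored precisely to circumvent this, since it represents $\hat g(\xi,t+\Delta t)$ as an average (against a probability measure on $[1,+\infty)$) of products and dilations of $\hat g(\cdot,t)$, on which weighted $L^2$-estimates by $|\xi|^{2s}$ pass routinely provided $p^2+q^2<1$. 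The outcome is a uniform-in-time bound $\|g(t)\|_{H^s(\R)}\leq C_s$ for every $s>0$, and in particular $\|g(t)-g_\infty\|_{H^s}\leq C_s$ thanks to Theorem \ref{thm.smoothness}.

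For step (iv), Plancherel's identity together with a cut-off at frequency $R$ gives
\[
\|g(t)-g_\infty\|_{L^2}^2 \leq 2R\,\sup_{\xi\in\R}|\hat g(\xi,t)-\hat g_\infty(\xi)|^2 + R^{-2s}\,\|g(t)-g_\infty\|_{H^s}^2,
\]
and choosing $R=R(t)$ to balance the two terms yields an exponential decay $\|g(t)-g_\infty\|_{L^2}\leq C e^{-\gamma_2 t}$. To upgrade from $L^2$ to $L^1$, split $\int|g(v,t)-g_\infty(v)|\,\d v$ at $|v|=M$: Cauchy--Schwarz controls the inner region by $(2M)^{1/2}\|g-g_\infty\|_{L^2}$, while Markov's inequality and the uniform $(2+\tilde\delta)$-moment bound supplied by Theorem \ref{teo-pt} control the outer region by $C M^{-(2+\tilde\delta)}$. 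Optimizing $M$ as a function of $t$ produces the claimed exponential rate $\|g(t)-g_\infty\|_{L^1(\R)}\leq C e^{-\gamma t}$ with $\gamma>0$ explicitly computable from $\gamma_1,\gamma_2$, $s$ and $\tilde\delta$.
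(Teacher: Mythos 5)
Your architecture --- a Fourier tail for $\hat g_0$ extracted from $\sqrt{g_0}\in\dot H^\nu$, uniform $L^\infty_\xi$ convergence obtained by interpolating that tail (propagated by Theorem \ref{teo-bounds}) against the $d_{2+\delta}$ decay, a uniform-in-time Sobolev bound, and a final double interpolation down to $L^1$ through the moment bounds --- is exactly the paper's architecture: steps (i), (ii) and (iv) reproduce Lemma \ref{fisher} and the interpolation inequalities borrowed from \cite{CGT}. The gap is in step (iii), which you correctly single out as the main obstacle and then dispose of in one sentence. There are two distinct problems. First, the conclusion you assert, $\|g(t)\|_{H^s}\le C_s$ for \emph{every} $s>0$, does not follow from $g_0\in H^\eta$ for some small $\eta$ and is false in general: the representation \eqref{sol-d} exhibits $\hat g(\cdot,t+\Delta t)$ as a convex combination which retains an $O(1)$-weighted average of dilations of $\hat g(\cdot,t)$, so the solution inherits exactly the Sobolev regularity of the initial datum and gains none (there is no smoothing for Maxwell-type kernels; even in \cite{CGT} the full solution is only as regular as $f_0$, the gain appearing only after subtracting an exponentially small rough part). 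What the paper proves --- and all that is needed, since the interpolation from $d_{2+\delta}$ and a Sobolev norm down to $L^2$ works with an arbitrarily small Sobolev exponent --- is propagation of the $\dot H^\eta$ norm actually present at $t=0$.

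Second, and more seriously, even this propagation is not a ``routine weighted $L^2$ estimate.'' The direct energy computation on \eqref{g-eq} yields \eqref{stima-imprecisa}, whose constant $C_{p,q,\eta}$ is in general \emph{positive}: bounding the gain term crudely gives
\[
\int_\R |\xi|^{2\eta}|\hat g(p\xi,t)|^2|\hat g(q\xi,t)|^2\,\d\xi \le \frac12\left(\frac1{p^{2\eta+1}}+\frac1{q^{2\eta+1}}\right)\|g(t)\|^2_{\dot H^\eta},
\]
and this coefficient typically exceeds the damping $1+\frac{1-p^2-q^2}{2}(2\eta+1)$ supplied by the loss and drift terms, so Gronwall produces exponential growth, not a uniform bound. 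The paper closes the estimate only by splitting the gain integral at a frequency $R$ and feeding back the output of your step (ii): since $\sup_\xi|\hat g(\xi,t)-\hat g_\infty(\xi)|\to 0$ exponentially and $\hat g_\infty(\xi)\to 0$ as $|\xi|\to\infty$, one has $|\hat g(p\xi,t)|\le 2\eps$ for $|\xi|>R$ and $t\ge t_0$, which converts the high-frequency part of the gain term into $\eps\|g(t)\|^2_{\dot H^\eta}$ and yields the dissipative inequality \eqref{unif}. This feedback of the $L^\infty_\xi$ convergence into the energy estimate is the actual substance of the proof of Theorem \ref{teo-L1}, and it is missing from your proposal.
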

Let us begin by the following lemma.

\begin{lem}\label{fisher}
Let the initial density $g_0$ satisfy the normalization conditions
\fer{norm1}, and
$$
\int_{\R}|v|^{2+\tilde\delta}\, g_0(v)\, \d v<+\infty.
$$
If in addition $\sqrt{g_0} \in {\dot H^{\nu}(\R)}$ for some $\nu>
0$, then $g_0$ satisfies
\[
|\hat g_0(\xi)|\leq \frac C{(1+\beta|\xi|)^\nu}, \quad  \xi\in\R
\]
for positive constants $C$ and $\beta$ and the solution $g(t)$ of
(\ref{g-eq}) satisfies \be\label{converg} \sup_{\xi\in\R} |\hat
g(\xi, t) -\hat g_\infty(\xi)| \leq C_1 e^{-C_2 t},\quad t\geq 0
\ee for positive constants $C_1$ and $C_2$.
\end{lem}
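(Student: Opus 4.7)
The proof naturally splits into two parts, following the announcement of the statement itself. First I would derive the pointwise decay of $\hat g_0$ from the assumption $\sqrt{g_0}\in\dot H^\nu(\R)$. Setting $h=\sqrt{g_0}$, we have $g_0=h^2$ and hence $\hat g_0=\hat h\ast\hat h$ (up to a harmless normalization constant depending on the Fourier convention). Combining the elementary inequality
\[
|\xi|^\nu\le c_\nu\bigl(|\xi-\eta|^\nu+|\eta|^\nu\bigr),\quad \nu>0,
\]
with Cauchy--Schwarz and Plancherel, I would estimate
\[
|\xi|^\nu|\hat g_0(\xi)|\le 2\,c_\nu\,\|h\|_{L^2(\R)}\,\|h\|_{\dot H^\nu(\R)}.
\]
The normalization condition \eqref{norm1} gives $\|h\|_{L^2}^2=\int_\R g_0\,\d v=1$, while $\|h\|_{\dot H^\nu}$ is finite by hypothesis. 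Coupling the resulting bound with the trivial estimate $|\hat g_0|\le 1$ then immediately yields the first announced inequality $|\hat g_0(\xi)|\le C(1+\beta|\xi|)^{-\nu}$ for every $\xi\in\R$.

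Next I would plug this estimate into Theorem \ref{teo-bounds}: hypothesis \eqref{con3} is now satisfied for any $R>0$, so the uniform-in-time propagation result, together with Remark \ref{remark}, supplies positive constants $C,\kappa,\mu$ such that
\[
|\hat g(\xi,t)|\le \frac{C}{(1+\kappa|\xi|)^\mu},\quad \xi\in\R,\ t\ge 0,
\]
and an analogous (in fact much sharper) bound holds for $\hat g_\infty$ by Theorem \ref{thm.smoothness}. On the other hand, Theorem \ref{teo-pt} provides the exponential decay of the Fourier distance
\[
d_{2+\delta}(g(t),g_\infty)\le e^{-|\SSS(\delta)|t}\,d_{2+\delta}(g_0,g_\infty),\quad 0<\delta<\tilde\delta.
\]

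Finally, to upgrade this weak convergence into a sup-norm one I would run the interpolation argument already sketched in the introduction: for any $R>0$, splitting $\R=\{|\xi|\le R\}\cup\{|\xi|>R\}$ gives
\[
\sup_{\xi\in\R}|\hat g(\xi,t)-\hat g_\infty(\xi)|\le d_{2+\delta}(g(t),g_\infty)\,R^{2+\delta}+\frac{2C}{(\kappa R)^\mu},
\]
and optimizing in $R$ yields
\[
\sup_{\xi\in\R}|\hat g(\xi,t)-\hat g_\infty(\xi)|\le C_1\,d_{2+\delta}(g(t),g_\infty)^{\mu/(2+\delta+\mu)}.
\]
Substituting the exponential decay of $d_{2+\delta}$ then produces \eqref{converg} with explicit rate $C_2=\mu\,|\SSS(\delta)|/(2+\delta+\mu)$. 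The main technical input is the very first step, namely converting the $\dot H^\nu$ regularity of $\sqrt{g_0}$ into algebraic decay of $\hat g_0$; once that is in place, the two previously established theorems combine almost mechanically to deliver the claim.
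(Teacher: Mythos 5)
Your proposal is correct and follows essentially the same route as the paper: the convolution identity $\hat g_0=\widehat{\sqrt{g_0}}\ast\widehat{\sqrt{g_0}}$ combined with the subadditivity of $|\xi|^\nu$, Cauchy--Schwarz and Plancherel for the first claim, then Theorem \ref{teo-bounds} with Remark \ref{remark}, the Gevrey bound on $\hat g_\infty$, and the splitting/optimization in $R$ against the exponential decay of $d_{2+\delta}$ for \eqref{converg}. The only detail worth noting (which the paper also leaves implicit) is that the derived bound $C(1+\beta|\xi|)^{-\nu}$ carries a constant $C$ possibly larger than $1$, so one must shrink $\nu$ and $\beta$ to recover hypothesis \eqref{con3} in the exact form required by Theorem \ref{teo-bounds}.
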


\begin{proof}
Since $g_0= \sqrt{g_0} \sqrt{g_0}$, then $\hat g_0 = \widehat
{\sqrt{g_0}} \ast  \widehat {\sqrt{g_0}}$. So, for $\xi\in\R$ we
get
\[
\begin{aligned}
 |\xi|^{\nu} |\hat g_0(\xi)| &\leq \int_\R |\xi|^\nu \left|\widehat {\sqrt{g_0}}(\xi -\tau)\widehat {\sqrt{g_0}}(\tau)\right |\, \d \tau\\
&\leq K \int_\R \left(|\xi-\tau|^\nu +|\tau|^\nu\right ) \left|\widehat {\sqrt{g_0}}(\xi -\tau)\widehat {\sqrt{g_0}}(\tau)\right |\, \d \tau\\
&\leq  2K \| \sqrt{g_0}\|_{\dot H^\nu}.
\end{aligned}
\]
Since moreover $\left|\hat g_0(\xi)\right |\leq 1$, we can find
positive  $C$ and $\beta$ such that
\[
|\hat g_0(\xi)|\leq \frac C{(1+\beta|\xi|)^\nu}, \quad \xi\in\R.
\]
Thanks to Theorem \ref{teo-bounds} and to  Remark \ref{remark}, we
get
\[
 |\hat g(\xi, t)| \leq
\frac {\tilde C}{(1+ \kappa |\xi|)^{\mu}},\quad \xi\in \R, \quad
t\geq 0
\]
for suitable $\tilde C$, $\kappa$ and $\mu$. The steady state
$g_\infty$ belongs to a Gevrey class, so it satisfies an analogous
estimate, with suitable constants which we can suppose to be the
same. Let now $R>0$ to be chosen in a moment. We get, for
$\xi\in\R$:
\[
 |\hat g(\xi,t) - \hat g_\infty(\xi)| \le d_{2+\delta}(g(t), g_\infty) R^{2+\delta} +
 \frac {2\tilde C}{(\kappa R)^\mu},
 \]
which implies, optimizing over $R$,
\[
|\hat g(\xi,t) - \hat g_\infty(\xi)| \le C_1 d_{2+\delta}(g(t),
g_\infty)^{\mu/(2+\delta+\mu)} = C_1 e^{-C_2 t},\quad \xi\in\R,\
t\geq 0.
\]
for $C_1$ and $C_2$ positive constants.
\end{proof}

\noindent  {\bf Proof of Theorem \ref{teo-L1}}.
 The result consists in converting the weak convergence
in the Fourier distance  of the solution $g(t)$ to the stationary
state $g_\infty$ (Theorem \ref{teo-pt}) into a $L^1$ convergence
by interpolating this weak distance $d_{2+\delta}$ with the
uniformly boundedness in time of suitable moment and Sobolev norm
of the solution itself. The only missing ingredient at this point
is the boundedness of the Sobolev norm and we will go through the
proof of it in a moment. Let us recall first how we can
interpolate these results, following the scheme introduced in
\cite{CGT} and fruitfully applied afterward in several papers
(\cite{BCT}, \cite{CT} for instance).

First of all, it is easy to prove the  two following
interpolation bounds (see Theorems 4.1 and 4.2 in \cite{CGT}): for
$\delta \in (0,\tilde \delta)$,  there exists a  positive constant
$C$ such that
\[
\|h\|_{L^1} \leq C \| |v|^{2+\delta} h\|^{\frac
1{1+2(2+\delta)}}_{L^1} \|h\|^{\frac
{2(2+\delta)}{1+2(2+\delta)}}_{L^2}
\]
and for any $s\geq 0$ there exist positive constants $M$, $N$,
$\beta$ and $\gamma$ such that \be\label{sob-inter} \|h\|_{H^s}
\leq C \left(\sup_\R \frac{|\hat
h(\xi)|}{|\xi|^{2+\delta}}\right)^{\beta} \left( \|h\|_{H^M} +
\|h\|_{H^N}\right )^{\gamma}. \ee So, letting $h= g(t)-g_\infty$,
and $s=0$ we get
\begin{multline*}
\|g(t)-g_\infty\|_{L^1} \leq  C
\left( \| |v|^{2+\delta}g(t)\|_{L^1}  +\| |v|^{2+\delta} g_\infty\|_{L^1}\right)^{\tilde \alpha} \times \\
 d_{2+\delta}(g(t), g_\infty)^{\tilde \beta} \, \left( \|g(t)\|_{H^M} + \|g(t)\|_{H^N} +
  \|g_\infty\|_{H^M} + \|g_\infty\|_{H^N}\right )^{\tilde \gamma}
\end{multline*}
for suitable exponents $\tilde \alpha$, $\tilde \beta$, $\tilde
\gamma$. Concerning the stationary state $g_\infty$,  it have been
proved in Theorem \ref{teo-pt} that $\| |v|^{2+\delta}
g_\infty\|_{L^1} <\infty$ for $\delta \in (0,\tilde \delta)$ and
thanks to the Gevrey regularity, we have $g_\infty \in H^s(\R)$
for all $s\geq 0$. As for the scaled solution $g(t)$, the uniform
boundedness of the $(2+\delta)$-th moment  have been also proved
in Theorem \ref{teo-pt}, so we will get the $L^1$ exponential
convergence as soon as we prove the uniform boundedness in time of
$g(t)$ in a suitable Sobolev space $H^{\max (M,N)}(\R)$. Of
course, we need to assume  $g_0\in H^{\max(M,N)}(\R)$. As a
byproduct of the uniform boundedness of Sobolev norms, we will
also get from \eqref{sob-inter} the convergence of $g(t)$ to
$g_\infty$  in Sobolev spaces.

Let us recall how to prove the uniform boundedness of $g(t)$ in a
generic homogenous Sobolev space $\dot H^\eta$ for $\eta \geq 0$
under the assumption $g_0\in \dot H^\eta$. First of all, let us
remark that $g(t) \in \dot H^\eta$ for all $t >0$, without any
uniformity in time. Indeed, coming back to the original non scaled
solution $f(t)$ we get
\[
\begin{aligned}
\frac{\d}{\d t} \| f(t)\|^2_{\dot H^\eta} &= \frac{\d}{\d t}\int_\R |\xi|^{2\eta} \hat f(\xi, t) \overline {\hat f(\xi,t)} \, \d \xi \\
&=
2\int_\R |\xi|^{2\eta} \mathrm{Re} \left(\overline {\hat f(\xi, t)} \partial_t \hat f(\xi, t)\right ) \, \d \xi  \\
& = 2\int_{\R} |\xi|^{2\eta} \mathrm {Re}\left( \overline {\hat
f(\xi, t)}
\left( \hat f(p\xi, t) \hat f(q \xi, t) -\hat f(\xi, t) \right )\right ) \, \d \xi\\
&=-2\int_\R  |\xi|^{2\eta} |\hat f(\xi, t)|^2\, \d \xi + 2 \int_\R
|\xi|^{2\eta} \mathrm{Re}\left( \overline {\hat f(\xi, t)} \hat
f(p\xi, t) \hat f(q \xi, t)\right )
\, \d \xi \\
& \leq -1\int_\R  |\xi|^{2\eta} |\hat f(\xi, t)|^2\, \d \xi +
\int_\R |\xi|^{2\eta}  |\hat f(p\xi,t)|^2 |\hat f(q\xi,t)|^2\, \d \xi\\
&\leq  -1\int_\R  |\xi|^{2\eta} |\hat f(\xi, t)|^2\, \d \xi +
\frac 12 \left(\frac 1{q^{2\eta +1}} + \frac  1{p^{2\eta
+1}}\right ) \int_\R |\xi|^{2\eta}  |\hat f(\xi,t)|^2\, \d \xi .
\end{aligned}
\]
Since $\hat g\left(e^{\frac{p^2+q^2-1} 2t}\xi, t\right )=  \hat f
\left (\xi, t\right )$ we get
\[
\frac{\d}{\d t} \left( e^{\frac{1-p^2-q^2} 2\,(2\eta +1)\, t} \|
g(t)\|^2_{\dot H^\eta} \right )\leq \left(-1 +\frac 12 \left(\frac
1{q^{2\eta +1}} + \frac  1{p^{2\eta +1}}\right )\right )
e^{\frac{1-p^2-q^2} 2\,(2\eta +1)\, t} \| g(t)\|^2_{\dot H^\eta}
\]
and so \be\label{stima-imprecisa} \frac{\d}{\d t} \|
g(t)\|^2_{\dot H^\eta} \leq \left[-1- \frac {1-p^2-q^2} 2 (2\eta
+1) + \frac 12 \left(\frac 1{q^{2\eta +1}} + \frac  1{p^{2\eta
+1}}\right )\right ]\| g(t)\|^2_{\dot H^\eta}  = C_{p,q,\eta} \|
g(t)\|^2_{\dot H^\eta}, \ee which leads to
\[
\| g(t)\|^2_{\dot H^\eta} \leq \| g_0\|^2_{\dot H^\eta}
e^{C_{p,q,\eta} t}.
\]
Since $p^2+q^2 <1$, it is not difficult  to be convinced that
$C_{p,q,\eta} >0$ as soon as for example $q$ is small enough.

Let us make  estimate \eqref{stima-imprecisa} more accurate. The
goal is to get for example the following differential inequality:
for two positive constants $H$ and $K$ and $t_0>0$:
\be\label{unif} \frac{\d}{\d t} \| g(t)\|^2_{\dot H^\eta} \leq - H
\|g(t)\|^2_{\dot H^\eta} + K,\quad  t\geq t_0 \ee so that
\[
\| g(t)\|^2_{\dot H^\eta} \leq C \max (\|g(t_0)\|^2_{\dot H^\eta},
1), \quad t\geq t_0.
\]
Let us come back to inequality
\[
\frac{\d}{\d t} \| f(t)\|^2_{\dot H^\eta} \leq -1\int_\R
|\xi|^{2\eta} |\hat f(\xi, t)|^2\, \d \xi + \int_\R |\xi|^{2\eta}
|\hat f(p\xi,t)|^2 |\hat f(q\xi,t)|^2\, \d \xi
\]
which reads, on the scaled solution $g(t)$,
\[
\begin{aligned}
&\frac{\d}{\d t} \left( e^{\frac{1-p^2-q^2} 2\,(2\eta +1)\, t}
 \| g(t)\|^2_{\dot H^\eta}\right )\\
& \leq -e^{\frac{1-p^2-q^2} 2\,(2\eta +1)\, t}
 \int_\R  |\xi|^{2\eta} |\hat g(\xi, t)|^2\, \d \xi + e^{\frac{1-p^2-q^2} 2\,(2\eta +1)\, t}
\int_\R |\xi|^{2\eta}  |\hat g(p\xi,t)|^2 |\hat g(q\xi,t)|^2\, \d
\xi
\end{aligned}
\]
and so
\[
\frac{\d}{\d t} \| g(t)\|^2_{\dot H^\eta} \leq \left(-1- \frac
{1-p^2-q^2} 2 (2\eta +1)\right ) \| g(t)\|^2_{\dot H^\eta} +
\int_\R |\xi|^{2\eta}  |\hat g(p\xi,t)|^2 |\hat g(q\xi,t)|^2\, \d
\xi.
\]
Since $p^2+q^2<1$,  it would be enough to obtain for example the
following inequality \be \label{dis1} \int_\R |\xi|^{2\eta} |\hat
g(p\xi, t)|^2 |\hat g(q \xi, t)|^2\, \d \xi \leq \frac 12 \|
g(t)\|_{\dot H^{\eta}}^2 + K, \quad t\geq t_0 \ee where  $K>0$ is
independent of $t$. Let us prove inequality \eqref{dis1}. We split
the integral in \eqref{dis1} into two parts
\[
\int_\R |\xi|^{2\eta} |\hat g(p\xi, t)|^2 |\hat g(q \xi, t)|^2\,
\d \xi = \int_{|\xi| \leq R} + \int_{|\xi|>R} = A+B
\]
where $R$ will be chosen later. Let us estimate first the  term in
$A$ (we will denote $\eps$ a constant which is allowed to vary
from one line to another, depending at most on $p$ and $q$). Since
$|\hat g(\xi, t)| \leq 1$ for $\xi\in\R$ and $t\geq 0$ we simply
get
\[
\begin{aligned}
&\int_{|\xi| \leq R} |\xi|^{2\eta} |\hat g(p\xi, t)|^2 |\hat g(q \xi, t)|^2\, \d \xi\\
&\leq \int_{|\xi| \leq R} |\xi|^{2\eta}\, \d \xi = \frac 2{2\eta
+1} R^{2\eta +1}, \quad t\geq 0.
\end{aligned}
\]
% {\bf Qui servirebbe un enunciato chiaro del fatto che se $\sqrt f_0\in \dot H^{s'}$  allora la soluzione $g(t)$ verifica }
% \[
% |\hat g(\xi, t) -\hat g_\infty(\xi)| \leq C_1e^{-C_2 t}, \xi \in \R, t>0
% \]
Let us come to the  term in $B$, where we are going to exploit
Lemma \ref{fisher}. We remark that $\hat g_\infty (\xi)\to 0$ for
$\xi \to +\infty$ and so, by Lemma \ref{fisher}, for any $\eps >0$
there exist $R>0$ and $t_0$ depending on $\eps$ and $p$ such that
\[
 |\hat g(p\xi, t)| \leq |\hat g(p\xi,t)-\hat g_\infty(p\xi)| + |\hat g_\infty(p\xi)| \leq 2\eps,  \quad |\xi| >R, \ t\geq t_0.
\]
We can deduce  for $t\geq t_0$:
\[
\begin{aligned}
 &\int_{|\xi| >R} |\xi|^{2\eta} |\hat g(p\xi, t)|^2 |\hat g(q \xi, t)|^2\, \d \xi
\leq (2\eps)^2 \int_{|\xi| >R} |\xi|^{2\eta} |\hat g(q \xi, t)|^2\, \d \xi\\
& \leq \frac {\eps}{q^{2\eta +1}} \int_{\R} |\xi|^{2\eta} |\hat
g(\xi,t)|^2 \, \d \xi =\eps \| g(t)\|_{\dot H^\eta}^2.
\end{aligned}
\]
We have obtained
\[
\begin{aligned}
\int_\R |\xi|^{2\eta} |\hat g(p\xi, t)|^2 |\hat g(q \xi, t)|^2\,
\d \xi &\leq \eps \| g(t)\|_{\dot H^{\eta}}^2 + \frac 2{2\eta
+1}R^{2\eta +1},  \quad t\geq t_0.
\end{aligned}
\]
Letting $\eps$ be fixed such that  $\eps \leq \frac 12$, we get
the desired estimate.

\hfill $\scriptstyle\square$

%
% \end{document}

%%%%%%%%%%%%%%%%%%%%%%%%%%%%%%%%%%%%%%%%%%%%%%%%%%%%%%%%%%%%%%%%%%%

\section{Lyapunov functionals and open questions}\label{lyap}

The case $p+q =1$ separates in a natural way from the others. It
corresponds to a one-dimensional dissipative Boltzmann equation in
which the momentum is preserved in a microscopic collision of type
\fer{coll}. Equation \fer{eq:boltz} with $p+q=1$ as been
intensively studied in a series of papers \cite{BMP, BK, BC03,
PT06}. Among other properties, this model possesses an explicit
self similar solution, which has been first discovered in
\cite{BMP}. In fact, condition $p+q= 1$ implies
 \[
\frac 12\left(p^2+q^2-1\right) = -pq .
 \]
Hence the Fourier transformed version of the scaled equation
\fer{g-eq} can be written as
 \begin{equation}\label{g-eq1}
\hat g(p\xi,t) \hat g(q\xi,t)-\hat g(\xi,t) = \partial_t \hat
g(\xi,t) -pq \xi\partial_\xi\hat g(\xi,t).
 \end{equation}
 The choice
 \[
%\label{ste1}
\hg_\infty(\xi) = (1 +|\xi|) e^{-|\xi| }
 \]
leads to
 \[
 \hat g_\infty(p\xi) \hat g_\infty(q\xi)-\hat g_\infty(\xi) = pq |\xi|^2e^{-|\xi|} = - pq \xi\partial_\xi\hat g_\infty(\xi)
 \]
 and so $\hg_\infty$ solves \fer{g-eq1} as a stationary solution for any choice of the parameters $p$
 and $q$ such that $p+q =1$. It can be easily verified that in physical
 variables the steady solution reads
  \be\label{ste2}
 g_\infty(v) = \frac 2{\pi(1+ v^2)^2} .
  \ee
 Note that this function satisfies the normalization conditions \fer{norm1}.
 Under these constraints, however, it can be shown \cite{To99} that $g_\infty$ is
 the (unique) minimizer of the convex functional
 \be
\label{Liap}
 H(f) = - \int_\R \sqrt{f(v)} \, \d v .
 \ee
It is a natural question to investigate whether the functional $H$
is a Lyapunov functional for the scaled equation for $g(v,t)$,
which can be formally written as
 \be\label{scal}
\partial_t g (v,t) = g_p* g_q(v, t) - g(v,t) + \frac 12\left(p^2+q^2-1\right)
\partial_v(vg(v, t)).
 \ee
 The results of both Section \ref{regularity} and Section \ref{sec-strong} lead
 to conclude that, under suitable regularity assumptions on the initial value,
 one can study the time derivative of the functional $H(g)(t)$ along solutions
 to equation \fer{scal}, obtaining
 \[
%\label{var-lyap}
\frac {\d}{\d t} H(g)(t) = -\frac 12 \left\{ \int_\R
\frac{g_p*g_q(v,t)}{\sqrt {g(v,t)}}\, \d v - \frac{1+p^2
+q^2}2\int_\R {\sqrt {g(v,t)}}\, \d v \right\}.
 \]
Hence, the functional $H$ is a Lyapunov functional for equation
\fer{scal} provided the inequality
 \be \label{main-in}
\frac{1+p^2 +q^2}2\int_\R {\sqrt {g(v)}}\, \d v \le  \int_\R
\frac{g_p*g_q(v)}{\sqrt {g(v)}} \, \d v , \qquad p+q = 1,
 \ee
is verified for all functions satisfying constraints \fer{norm1}.
We remark that inequality \fer{main-in} is saturated by choosing
$g = g_\infty$, with $g_\infty$ defined as in \fer{ste2}. To our
knowledge, this inequality has never been investigated before, but
it can be conjectured that it holds true, even if we are not able
to prove it.

A different way to attach the problem is to resort to the Fourier
version of equation \fer{scal}. This idea has been fruitfully
employed in \cite{BT} to recover Lyapunov functionals for the
Boltzmann equation for Maxwell molecules. Let us consider the
approximate solution \fer{sol-d} which is a convex combination of
the probability densities $\hat g(\xi,t)$ and $\hat g(p\xi,t) \hat
g(q\xi,t)$. For any convex functional $\widehat H$ acting on $\hg$
we obtain
\begin{equation}\label{H-d}
\widehat H(\hat g(\xi, t+\Delta t)) \le \frac{r}{\Delta t}\
\int_{1}^{+\infty}\left(\Delta t \,\, \widehat H \left( \hat
g(\tau p\xi,t) \hat g(\tau q\xi,t)\right) \ +(1-\Delta t)\
\widehat H (\hat g( \tau \xi,t)) \right)\frac{\d \tau}{\tau^{\frac
r{\Delta t}+1}}.
\end{equation}
 If $\widehat H(\hg) = H(g)$ is defined by \fer{Liap}, we have
 \[
\widehat H(\hg(\tau \xi)) = \sqrt\tau  \widehat H(\hg(\xi)),
 \]
and this implies that inequality \fer{H-d} becomes
 \[
\widehat H(\hat g(\xi, t+\Delta t)) \le \frac{\frac{r}{\Delta
t}}{\frac{r}{\Delta t}- \frac 12} \left(\Delta t \,\, \widehat H
\left( \hat g( p\xi,t) \hat g(q\xi,t)\right) \ +(1-\Delta t)\
\widehat H (\hat g(  \xi,t)) \right).
\]
Let us suppose that there exists $A(p,q)$ such that for all
functions $g$ satisfying conditions \fer{norm1} we get
 \be \label{ine1}
\widehat H \left( \hat g( p\xi) \hat g(q\xi)\right) \le A(p,q)
\widehat H (\hat g(  \xi)).
 \ee
Then, since
 \[
\frac 1r=\frac{1-p^2-q^2}{2},
 \]
the condition
 \be\label{coeff}
A(p,q) = \frac{3 +p^2 +q^2}4
 \ee
would imply
 \[
\frac{\frac{r}{\Delta t}}{\frac{r}{\Delta t}- \frac 12}
\left(\Delta t \,\, A(p,q) \ +(1-\Delta t)\ \right) = 1.
 \]
 In this case, inequality \fer{H-d} would imply
 \[
\widehat H(\hat g(\xi, t+\Delta t)) \le \widehat H(\hat g(\xi,
t)),
 \]
and $H$ would be a Lyapunov functional. Note that inequality
\fer{ine1} corresponds to a reverse Young inequality first derived
by Leindler \cite{Lei}: for $0 < \alpha, \beta, \rho \le 1$ and
$f$, $g$ non-negative
 \cite{Ba}
 \be\label{Young}
 \Vert f*g\Vert_\rho \ge \Vert f\Vert_\alpha\Vert g \Vert_\beta, \qquad
 1/\alpha + 1/\beta = 1 + 1/\rho.
 \ee
In our case, $\rho = \alpha = 1/2$, $\beta =1$ together with the
second condition in \fer{norm1} implies
 \[
%\label{Y-2}
 \Vert f_p*f_q\Vert_{1/2} \ge p \Vert f\Vert_{1/2},
 \]
 namely inequality \fer{ine1} with $A(p,q) =  p$.
Unlikely, the direct
 application of inequality \fer{Young} is not enough to obtain \fer{ine1}. It
 remains an open question to prove that, under constraints \fer{norm1} it holds the
 Young-type reverse inequality
  \[
%\label{y-2}
\Vert f_p*f_q\Vert^{1/2}_{1/2} \ge A(p,q) \Vert
f\Vert_{1/2}^{1/2},
 \]
 where $p+q =1$ and $A(p,q)$ is given by \fer{coeff}.

%%%%%%%%%%%%%%%%%%%%%%%%%%%%%%%%%%%%%%%%%%%%%%%%%%%%%%%%%%%%%%%%%%%%%%%%%%%%%%%%%%%%

\section{Appendix}\label{annexe}

\noindent{\bf Proof of Proposition \ref{prop-momenti}:} We will
consider only the dissipative case $p^2+q^2<1$, since the other
case adapts straightforwardly. It is easy to get for all $N\in \N$
and $j=0,\dots, N-1$: \be \label{massa-mom} \phi_{j+1}^N(v) \geq
0, \quad \int_\R \phi_{j+1}^N(v)\, \d v=1\, \quad \int_\R v\,
\phi_{j+1}^N(v) \, \d v=0. \ee
 Let us consider therefore the evolution of the two other
moments of the sequence $\phi_j^N$. First of all, it is worth
noticing that for any function $h\in L^1(\R)$, $\alpha>0$ and
$\tau \neq 0$ we have \be\label{scaling} \int_\R |v|^{\alpha}
\frac 1 \tau h\left(\frac v\tau\right ) \d v= |\tau|^\alpha \|
|v|^\alpha h\|_{L^1}. \ee Let us compute now the second moment of
$\phi_{j+1}^N$:
\[
\begin{aligned}
\int_\R v^2 \phi_{j+1}^N(v)\, \d v&= \frac{r}{\Delta t}\
\int_{1}^{+\infty}\left[\Delta t \left(\int_\R v^2 \frac 1 \tau
\left(\phi_{j,p}^N\ast \phi_{j, q}^N\right ) \left(\frac v
\tau\right )\, \d v \right )+\right .\\
&\qquad\qquad\qquad +\left. (1-\Delta t) \left( \int_\R v^2  \frac
1 \tau
\phi_j^N\left(\frac v\tau\right )\, \d v \right ) \right]\frac{\d \tau}{\tau^{\frac r{\Delta t}+1}}\\
&= \frac{r}{\Delta t} \left(\Delta t \| v^2 \phi_{j,p}^N\ast
\phi_{j, q}^N\|_{L^1} +(1-\Delta t) \| v^2 \phi_j^N\|_{L^1} \right
) \int_{1}^{+\infty}\frac{ \tau ^2\d \tau}{\tau^{\frac r{\Delta
t}+1}}.
\end{aligned}
\]
For $N> \frac {2T}{r}$ we get
\[
\frac{r}{\Delta t}\int_{1}^{+\infty}\frac{ \d \tau}{\tau^{\frac
r{\Delta t}-1}} = \frac 1{ 1-\frac {2\Delta t}{r}}  = \frac 1{
1+\Delta t (p^2+q^2-1)}
\]
and so we are left with $\| v^2 \phi_{j,p}^N\ast \phi_{j,
q}^N\|_{L^1}$. We have
\[
\begin{aligned}
&\iint_{\R^2} v^2\, \frac 1p \phi_j^N\left( \frac {v-w}p\right )\, \frac 1q \phi_j^N\left ( \frac wq\right )\, \d w\, \d v=\\
&  \iint_{\R^2}\left((v-w)^2 + w^2 +2 (v-w)w\right ) \frac 1p
\phi_j^N\left( \frac {v-w}p\right ) \frac 1q \phi_j^N\left ( \frac
wq\right )\, \d w\, \d v.
\end{aligned}
\]
Thanks to \eqref{massa-mom}, we get
\[
\| v^2 \phi_{j,p}^N\ast \phi_{j, q}^N\|_{L^1} = (p^2+q^2) \| v^2
\phi_j^N\|_{L^1}
\]
and we end up with
\[
\| v^2 \phi_{j+1}^N\|_{L^1} = \frac{(\Delta t(p^2+q^2-1) +1)} {
1+\Delta t (p^2+q^2-1)}\| v^2 \phi_j^N\|_{L^1} = \| v^2
\phi_j^N\|_{L^1}
\]
and so by a recursive procedure \be \label{temp} \int_\R v^2
\phi_{j+1}^N(v) \, \d v=1, \quad j=0,\dots, N-1. \ee As for
$\int_\R |v|^{2+\delta}\, \phi_{j+1}^N(v)\, \d v$, we proceed in
the same way:
\[
\begin{aligned}
&\int_\R |v|^{2+\delta} \phi_{j+1}^N(v) \, \d v\\
&= \frac{r}{\Delta t}\ \int_{1}^{+\infty}\left[\Delta t
\left(\int_\R |v|^{2+\delta} \frac 1 \tau \left(\phi_{j,p}^N\ast
\phi_{j,
q}^N\right ) \left(\frac v \tau\right )\, \d v \right )+\right .\\
&\qquad\qquad\qquad +\left .(1-\Delta t) \left( \int_\R
|v|^{2+\delta}  \frac 1 \tau
\phi_j^N\left(\frac v\tau\right )\, \d v \right ) \right]\frac{\d \tau}{\tau^{\frac r{\Delta t}+1}}\\
&= \frac{r}{\Delta t} \left(\Delta t \| |v|^{2+\delta}\,
\phi_{j,p}^N\ast \phi_{j, q}^N\|_{L^1} +(1-\Delta t) \|
|v|^{2+\delta}\,  \phi_j^N\|_{L^1} \right )
\int_{1}^{+\infty}\frac{ \tau^{2+\delta}\d \tau}{\tau^{\frac
r{\Delta t}+1}}.
\end{aligned}
\]
Now, for $N> \frac {(2+\delta)T}{r}$ we get
\[
\frac{r}{\Delta t}\int_{1}^{+\infty}\frac{ \d \tau}{\tau^{\frac
r{\Delta t}-1-\delta}} = \frac 1{ 1-\frac {\Delta t}{r} (\delta
+2)}  = \frac 1{ 1-\Delta t \frac {\delta +2}2 (1-p^2-q^2)}.
\]
Let us estimate $\| |v|^{2+\delta} \phi_{j,p}^N\ast \phi_{j,
q}^N\|_{L^1}$. We will denote
\[
d_j^N = \int_\R |v|^{2+\delta} \phi_{j}^N(v) \, \d v.
\]
Since $0<\delta <1$, we can write
\[
\begin{aligned}
&|v|^{2+\delta} = v^2 |v|^{\delta} = (v-w+w)^2 |v-w+w|^{\delta}
\leq \left( (v-w)^2 + w^2 + 2(v-w)w\right )
\left( |v-w|^\delta +|w|^\delta\right )\\
& = |v-w|^{2+\delta} + w^2 |v-w|^\delta + 2 (v-w) |v-w|^\delta w +
(v-w)^2|w|^\delta + |w|^{2+\delta} + 2(v-w) |w|^\delta w
\end{aligned}
\]
so, thanks to \eqref{massa-mom} and \eqref{temp}, we get
\[
\begin{aligned}
d_{j+1}^N
 \leq  \frac 1{ 1-\Delta t \frac {\delta +2}2 (1-p^2-q^2)}
\left(d_{j}^N + \Delta t (p^{2+\delta} + q^{2+\delta} -1)d_{j}^N+
 \Delta t(p^\delta q^2 +q^\delta p^2) \| |v|^{\delta} \phi_{j}^N\|_{L^1}\right ).
\end{aligned}
\]
By H\"older inequality and the conservation of the mass, we obtain
\[
\| |v|^{\delta} \phi_{j}^N\|_{L^1} \leq \| |v|^{2}
\phi_{j}^N\|_{L^1}^{\frac \delta {2}} \| \phi_j^N\|_{L^1}^{\frac
{2-\delta}2} =1
\]
and so we get the recursive estimate
\[
d_{j+1}^N \leq  \frac 1{ 1-\Delta t \frac {\delta +2}2
(1-p^2-q^2)} \left(d_{j}^N +\Delta t (p^{2+\delta} + q^{2+\delta}
-1)d_{j}^N+
 \Delta t(p^\delta q^2 +q^\delta p^2) \right ).
 \]
Remembering that $\Delta t = \frac TN$, we would like to neglect
the low order terms. We have
\[
\begin{aligned}
& \frac 1{ 1-\Delta t \frac {\delta +2}2 (1-p^2-q^2)}
\left(d_{j}^N +\Delta t (p^{2+\delta} + q^{2+\delta} -1)d_{j}^N+
 \Delta t(p^\delta q^2 +q^\delta p^2) \right )\\
&=\frac{1+\Delta t \frac {\delta +2}2 (1-p^2-q^2)}{ 1-(\Delta t)^2
\left(\frac {\delta +2}2 (1-p^2-q^2)\right )^2} \left(d_{j}^N
+\Delta t (p^{2+\delta} + q^{2+\delta} -1)d_{j}^N+
 \Delta t(p^\delta q^2 +q^\delta p^2) \right )
\\
 &= \frac { d_{j}^N +\Delta t
(p^{2+\delta} + q^{2+\delta} -1 +\frac {\delta +2}2
(1-p^2-q^2))d_{j}^N+ \Delta t(p^\delta q^2 +q^\delta p^2)
 + (\Delta t)^2 K\, d_j^N +
(\Delta t)^2  H\, } {1-(\Delta t)^2 \left(\frac {\delta +2}2
(1-p^2-q^2)\right )^2},
 \end{aligned}
\]
where $H$, $K$ are positive constants, depending only on $p,q$ and
$\delta$. Denoting
\[
\SSS(\delta)= p^{2+\delta} + q^{2+\delta} -1 +\frac {2+\delta}2
(1-p^2-q^2), \qquad B_{p,q}(\delta) = p^\delta q^2 +q^\delta p^2,
\]
we got
\[
d_{j+1}^N\leq \frac {d_j^N +  \Delta t \SSS(\delta) d_j^N + \Delta
t B_{p,q}(\delta)  + (\Delta t)^2 H\, d_j^N + (\Delta t)^2 K\, }
{1-(\Delta t)^2 \left(\frac {\delta +2}2 (1-p^2-q^2)\right )^2}
\]
where, by assumption, we have  $\SSS(\delta) <0$. Moreover,
\[
\frac 1{1-(\Delta t)^2 \left(\frac {\delta +2}2 (1-p^2-q^2)\right
)^2} = 1 +  o(\Delta t), \quad \Delta t \to 0,
\]
so we get, for $N\in \N$ large enough, $j=0,\dots, N-1$:
\[
%\label{ricorr}
d_{j+1}^N\leq d_j^N - \frac 12\Delta t |\SSS(\delta)|  d_j^N +
2\Delta t B_{p,q}(\delta).
\]
%%%%
%%%% è giusto? c'era un errore di battitura?
%%%%
%%%%
This recursive relation implies
\[
\int_\R |v|^{2+\delta} \, \phi_{j+1}^N(v)\, \d v \leq C_\delta,
\quad j=0,\dots, N-1,
\]
for any $N\in \N$ large enough.

\hfill $\scriptstyle\square$

\noindent{\bf Proof of Proposition \ref{prop-converg}:} We divide
it into several steps.

\bigskip
 \noindent {\bf I STEP: Existence of the limit $g^*(\xi,t)$ of a subsequence.}
 Thanks to inequalities (\ref{mom}) and to
 the definition of $g^N$ we
have therefore
\begin{equation}\label{congn}
\sup_{[0,T]\times\R}\left|g^N(\xi,t)\right|\leq 1,\quad
 \sup_{[0,T]\times\R}\left|\partial_\xi g^N(\xi,t)\right|\leq C,\quad
\sup_{[0,T]\times\R}\left|\partial^2_\xi g^N(\xi,t)\right|\leq
1,\quad
\end{equation}
where $C$ is the same constant as in (\ref{mom}) and $N$ is large
enough. Moreover since $g^N(\xi,t)$ satisfies
\begin{equation}\label{conxi}
\partial_t^-g^N(\xi,t)=\frac 1r\xi\frac{\d}{\d \xi}\hat\phi^N_{K_N}(\xi)+
\hat\phi^N_{K_N-1}(p\xi)\hat\phi^N_{K_N-1}(q\xi)-\hat\phi^N_{K_N-1}(\xi)
\end{equation}
then for any compact set $K\subset\R$ there exists a constant
$C>0$ such that
\begin{equation}\label{conti}
\sup_{[0,T]\times K}\left|\partial_t^- g^N(\xi,t)\right|\leq C,
\quad N\geq \frac{2T}{r}.
\end{equation}
For any compact $K\subset \R$ the function $g^N(\xi,t)$ belongs to
${\cal C}([0,T]\times K)$ and thanks to properties (\ref{congn})
and (\ref{conti}) the sequence is equibounded and equicontinuous.
Therefore by Ascoli-Arzel\`a theorem and by taking the diagonal,
there exists a subsequence $\left\{g^{N_l}(\xi,t)\right \}$ which
converges uniformly  on $[0,T]\times K$ for any compact $K\subset
\R$. Let us call $g^*(\xi,t) $ the limit function. Since
$g^{N_l}(\xi,0)=\hat g_0(\xi)$ then $g^{*}(\xi,0)=\hat g_0(\xi)$.
Moreover for any  $t\in [0,T]$ the function $g^*(\xi,t)\in {\cal
C}^1(\R)$: indeed by Ascoli-Arzel\`a theorem and the diagonal
argument applied now to the sequence $\left\{ \partial_\xi
g^{N_l}(\xi, t)\right \}$, for any $t\in [0,T]$  we get a
subsequence that  converges uniformly to $\partial_\xi g^*(\xi,t)$
on any compact set $K\subset \R$. Since the limit function is
$\partial_\xi g^*(\xi,t)$,  the convergence holds for the original
sequence $\left\{ \partial_\xi g^{N_l}(\xi,t)\right \}$ and it is
not necessary to pass to a subsequence.

In order to get a uniform convergence in both frequency and time,
we remark that by \eqref{congn}  we have that
$\sup_{[0,T]\times\R}\left|\partial^2_\xi g^N(\xi,t)\right|\leq 1$
 and thanks to \eqref{conxi}
and \eqref{mom} we control  $\partial_t\partial_\xi g^N(\xi,t)$
therefore $\partial_\xi g^N(\xi,t)$  is Lipschitz continuous on
$[0,T]\times K$  and uniformely bounded. Again by Ascoli-Arzel\`a
we prove that $\left\{\partial_\xi g^{N_l}(\xi,t)\right \}$
converges uniformly to $\partial_\xi g^*(\xi,t)$ on $[0,T]\times
K$ for any compact set $K\subset \R$. From the uniform convergence
of $g^{N_l}(\xi,t)$ to $g^*(\xi,t)$ and of $\partial_\xi
g^{N_l}(\xi,t)$ to $\partial_\xi g^{*}(\xi,t)$ we get that both
$g^*(\xi,t)$ and  $\partial_\xi g^{*}(\xi,t)$ belong to ${\cal
C}([0,T]\times K)$.
\bigskip

\noindent  {\bf II STEP: $g^*(\xi,t)$ is a solution of equation
(\ref{eqdis})}.
 By a direct computation we obtain:
\begin{equation}\label{eqre}
\partial_t^-g^{N_l}(\xi,t)=\frac
1r\xi\ \partial_\xi
g^{N_l}(\xi,t)+g^{N_l}(p\xi,t)g^{N_l}(q\xi,t)-g^{N_l}(\xi,t)+R_{N_l}(\xi,t)
\end{equation}
where
\begin{equation}
\begin{aligned}
R_{N_l}(\xi,t)&=-\frac 1r\xi\left(\frac{\d}{\d
\xi}\hat\varphi_{K_{N_l}-1}^{N_l}(\xi)-\frac{\d}{\d
\xi}\hat\varphi_{K_{N_l}}^{N_l}(\xi)\right) +\\
&(1-\alpha)\left[\alpha\left(\hat\varphi_{K_{N_l}}^{N_l}(q\xi)-
\hat\varphi_{K_{N_l}-1}^{N_l}(q\xi)\right)\left(\hat\varphi_{K_{N_l}}^{N_l}(p\xi)-
\hat\varphi_{K_{N_l}-1}^{N_l}(p\xi)\right)\right.+\\
&\hat\varphi_{K_{N_l}-1}^{N_l}(p\xi)\left(\hat\varphi_{K_{N_l}-1}^{N_l}(q\xi)
-\hat\varphi_{K_{N_l}}^{N_l}(q\xi)\right)+
\hat\varphi_{K_{N_l}}^{N_l}(q\xi)\left(\hat\varphi_{K_{N_l}-1}^{N_l}(p\xi)
-\hat\varphi_{K_{N_l}}^{N_l}(p\xi)\right)+\\
&(1-\alpha)\left(\hat\varphi_{K_{N_l}}^{N_l}(\xi)
-\hat\varphi_{K_{N_l}-1}^{N_l}(\xi)\right) \Big ].
\end{aligned}
\end{equation}
Since $\left\{R_{N_l}(\xi,t)\right \}$ converges to zero for any
$t\in[0,T]$ uniformely on any compact set $K\subset \R$, the whole
right hand side of equation \eqref{eqre} converges; let us call
$L(\xi,t)$ the limit function, so that
\begin{equation}\label{eqlim}
L(\xi,t)=\frac 1r\xi\partial_\xi
g^{*}(\xi,t)+g^{*}(p\xi,t)g^{*}(q\xi,t)-g^{*}(\xi,t)
\end{equation}
and $\left\{\partial_t^- g^{N_l}(\xi,t)\right \}$ converges to
$L(\xi,t)$ for any $t\in[0,T]$ uniformely on any compact set
$K\subset \R$. Thanks to  property (\ref{conti}) and to Lebesgue's
dominated convergence theorem we remark that
$\left\{\partial_t^-g^{N_l}(\xi,t)\right \}$ converges to
$L(\xi,t)$ in ${\cal D}'(]0,T[\times K^\circ)$.  Since
$\left\{g^{N_l}(\xi,t)\right \}$ converges to $g^{*}(\xi,t)$
uniformly on $[0,T]\times K$ and so in ${\cal D}'(]0,T[\times
K^\circ)$ we have that $\left\{\partial_t g^{N_l}(\xi,t)\right \}$
converges in distributions  to $\partial_t g^{*}(\xi,t)$. For the
uniqueness of the limit and the fact that in distributions
$\partial_t^- g^{N_l}(\xi,t)=\partial_t g^{N_l}(\xi,t)$, we obtain
 $\partial_t g^{*}(\xi,t)=L(\xi,t)$ in the sense of distributions.
Finally  since $g^*(\xi,t)$ and  $\partial_\xi g^{*}(\xi,t)$
belongs to ${\cal C}([0,T]\times K)$, for any compact $K$  the
right hand side of \eqref{eqlim} is continuous in both variables
 therefore the same is true for $\partial_t g^{*}(\xi,t)$. This
 implies that $g^{*}(\xi,t)$ belongs to
 ${\cal C}'([0,T]\times \R)$.

\bigskip

 \noindent {\bf III STEP: we show that $g^*(\xi,t)=\hat
g(\xi,t)$}.
 Defining $F^*(\xi,t)=g^*(\xi\sqrt{E(t)},t)$ and
 $\hat f(\xi,t)=\hat g(\xi\sqrt{E(t)},t)$ we obtain two solutions of
  equation (\ref{cauchy}). Both $F^*(\xi,t)$ and $\hat f(\xi,t)$
 are ${\cal C}'([0,T]\times \R)$
 and bounded by 1. Denoting
 $h(\xi,t)=\hat f(\xi,t)-F^*(\xi,t)$ by an  easy computation we
 are led to
 $$
| \partial_th+h|\leq 2\|h(t)\|_\infty.
$$
By Gronwall Lemma we have
$$
\|h(t)\|_\infty\leq {\rm e}^t\|h(0)\|_\infty.
$$
and since $\hat f(\xi,0)=F^*(\xi,0)$  this implies the desired
equality $g^*(\xi,t)=\hat g(\xi,t)$.

\hfill $\scriptstyle\square$

%%%%%%%%%%%%%%%%%%%%%%%%%%%%%%%%%%%%%%%%%%%%%%%%%%%%%%%%%%%%%%%%%%%%%%%%%%%%%%%%%

\end{document}